\newtheorem{thm}{Theorem}
\newtheorem{coro}{Corollary}
\newtheorem{prop}{Proposition}
\newenvironment{proof}{{\bf Proof\,\,}}{\endproof\par}
\newcounter{spb}
\newcommand{\subpb}{(\alph{spb}) \addtocounter{spb}{1}}
\newcommand{\resetspb}{\setcounter{spb}{1}}
\def \openbox{$\sqcup\llap{$\sqcap$}$}
\def \endproof{\enskip \null \nobreak \hfill \openbox \par}
\newcommand{\limto}{\rightarrow}
\newcommand{\bz}{\mathbf 0}
\newcommand{\R}{\mathbb R}
\newcommand{\Q}{\mathbb Q}
\newcommand{\E}[1]{{\mathbb E}\left[ #1 \right]}
\begin{document}
\title{Hardness and Approximation Results for $L_p$--Ball Constrained Homogeneous Polynomial Optimization Problems\thanks{This research was supported by the Hong Kong Research Grants Council (RGC) General Research Fund (GRF) Project CUHK 419409.}}
\author{Ke Hou\thanks{Department of Systems Engineering and Engineering Management, The Chinese University of Hong Kong, Shatin, N.~T., Hong Kong.  E--mail: {\tt khou@se.cuhk.edu.hk}} \and Anthony Man--Cho So\thanks{Department of Systems Engineering and Engineering Management, and by courtesy, Department of Computer Science and Engineering and the CUHK--BGI Innovation Institute of Trans--omics, The Chinese University of Hong Kong, Shatin, N.~T., Hong Kong.  E--mail: {\tt manchoso@se.cuhk.edu.hk}}}
\date{\today}
\maketitle

\begin{abstract}
In this paper, we establish hardness and approximation results for various $L_p$--ball constrained homogeneous polynomial optimization problems, where $p \in [2,\infty]$.  Specifically, we prove that for any given $d \ge 3$ and $p \in [2,\infty]$, both the problem of optimizing a degree--$d$ homogeneous polynomial over the $L_p$--ball and the problem of optimizing a degree--$d$ multilinear form (regardless of its super--symmetry) over $L_p$--balls are NP--hard.  On the other hand, we show that these problems can be approximated to within a factor of $\Omega\left( (\log n)^{(d-2)/p} \big/ n^{d/2-1} \right)$ in deterministic polynomial time, where $n$ is the number of variables.  We further show that with the help of randomization, the approximation guarantee can be improved to $\Omega((\log n/n)^{d/2-1})$, which is independent of $p$ and is currently the best for the aforementioned problems.  Our results unify and generalize those in the literature, which focus either on the quadratic case or the case where $p \in \{2,\infty\}$.  We believe that the wide array of tools used in this paper will have further applications in the study of polynomial optimization problems.

\medskip
\noindent {\bf Keywords:} Polynomial Optimization; Approximation Algorithms; Diameters of Convex Bodies; Convex Programming

\medskip
\noindent {\bf Mathematics Subject Classification:} 15A69, 90C26, 90C59
\end{abstract}

\section{Introduction}
Motivated by its diverse applications and profound connections to various branches of mathematics, polynomial optimization has been the focus of much research effort during the past decade or so.  From an algorithmic perspective, polynomial optimization problems are generally intractable.  Thus, a fundamental research issue is to determine their approximability.  One important class of problems whose approximability has been extensively investigated in recent years is that of homogeneous polynomial optimization with $L_2$--norm constraints.  The first results in this direction were obtained by de Klerk et al.~\cite{dKLP06} and Barvinok~\cite{Barvinok07}, who showed that certain specially structured $L_2$--sphere constrained polynomial optimization problems admit polynomial--time approximation schemes (PTASes).  These were then followed by the work of Luo and Zhang~\cite{LZ10}, in which an approximation algorithm was developed for homogeneous quartic optimization problems with quadratic constraints (which includes the $L_2$--ball as a special case).  Around the same time, Ling et al.~\cite{LNQY09} considered the problem of approximately optimizing a biquadratic function over the Cartesian product of two $L_2$--spheres; while Zhang et al.~\cite{ZQY12} studied the hardness and approximability of certain $L_2$--sphere constrained homogeneous cubic optimization problems.  Since then, there have been significant activities in this line of research.  For instance, in~\cite{ZLQ11,LZQ12,YY12}, various researchers derived approximation results for the problem of optimizing a biquadratic function over quadratic constraints, thereby extending the results in~\cite{LNQY09}.  In~\cite{HLZ10}, He et al.~improved and substantially extended the results in~\cite{LZ10} by providing approximation algorithms for optimizing a general homogeneous polynomial over quadratic constraints (see also~\cite{LHZ12} for some latest developments).  It is worth noting that most of the aforementioned results were obtained using semidefinite relaxation techniques, and that most of the algorithms are randomized.  Recently, in a marked departure from the semidefinite relaxation paradigm, So~\cite{S11a} employed techniques from algorithmic convex geometry to design deterministic approximation algorithms for various $L_2$--sphere constrained homogeneous polynomial optimization problems.  The algorithms in~\cite{S11a} have a worst--case approximation guarantee of $\Omega((\log n/n)^{d/2-1})$, where $n$ is the number of variables and $d$ is the degree of the polynomial.  Roughly speaking, this means that given any problem instance, the algorithms will produce a feasible solution whose objective value is at least $\Omega((\log n/n)^{d/2-1})$ times the optimum.  This improves upon the $\Omega((1/n)^{d/2-1})$ bound established in~\cite{LNQY09,HLZ10,ZQY12} and is currently the best for general $L_2$--sphere constrained homogeneous polynomial and multiquadratic optimization problems.  Such development raises a natural question: Can the approach in~\cite{S11a} be applied to other classes of polynomial optimization problems?


In this paper, we address the above question by extending the approach in~\cite{S11a} to study the $L_p$--ball constrained homogeneous polynomial optimization problem; i.e., problem of the form
\begin{equation} \label{eq:generic-lp-opt}
   \max\{ f(x): \|x\|_p \le 1 \},
\end{equation}
where $p \in [2,\infty]$ and $f:\R^n\limto\R$ is a homogeneous polynomial of (fixed) degree $d\ge3$.  Our motivation for studying Problem (\ref{eq:generic-lp-opt}) is twofold.  First, it is a natural extension of the matrix norm problem in~\cite{BN01b,Steinberg05} and the $L_p$--Grothendieck problem in~\cite{KNS10}---both of which concern quadratic $f$'s with certain structure---as well as the $L_\infty$--ball constrained trilinear optimization problem in~\cite{KN08} and the $L_2$--ball constrained homogeneous polynomial optimization problem in~\cite{LZ10,HLZ10}.  However, to the best of our knowledge, there is no prior hardness or approximation result for Problem (\ref{eq:generic-lp-opt}) in its full generality.  Secondly, Problem (\ref{eq:generic-lp-opt}) lies at the heart of many applications.  For instance, Baratchart et al.~\cite{BBP98} demonstrated that many labeling problems in pattern recognition and image processing can be tackled by maximizing a certain polynomial over an $L_p$--ball.  In addition, the $L_p$--singular value and singular vector of a tensor, which have been extensively studied in the spectral theory of tensors and play an important role in signal processing, automatic control and data analysis, can be defined as the optimal value of and optimal solution to an $L_p$--ball constrained homogeneous polynomial optimization problem, respectively~\cite{L05,Q05}.  As our main contribution, we obtain both hardness and approximation results for Problem (\ref{eq:generic-lp-opt}).  Specifically, on the hardness side, we show that Problem (\ref{eq:generic-lp-opt}) is NP--hard for any given $d\ge3$ and $p\in[2,\infty]$.  To the best of our knowledge, this is the first hardness result for Problem (\ref{eq:generic-lp-opt}) that holds for any given $d\ge3$ and $p\in[2,\infty]$.  By contrast, existing hardness results for Problem (\ref{eq:generic-lp-opt}), such as those in~\cite{N03,HL09,AOPT11,ZQY12}, hold only for certain values of $d$ and $p$.  A key tool we used to prove the hardness result is a tensor symmetrization procedure introduced by Ragnarsson and Van Loan~\cite{RvL11}, which allows us to establish the equivalence between multilinear optimization problems and certain homogeneous polynomial optimization problems.  On the approximation side, we show that Problem (\ref{eq:generic-lp-opt}) can be approximated to within a factor of $\Omega\left( (\log n)^{(d-2)/p} \big/ n^{d/2-1} \right)$ by a deterministic polynomial--time algorithm.  Furthermore, if one allows randomization, then the approximation bound can be improved to $\Omega((\log n/n)^{d/2-1})$, independent of $p$.  In the process of deriving these results, we also establish the hardness of and develop approximation algorithms for certain $L_p$--ball constrained multilinear optimization problems, which could be of independent interest.  We remark that the aforementioned results apply only to the case where $p\in[2,\infty]$.  The case where $p\in[1,2)$, which is not covered in this paper, does not seem to be well understood, even when $f$ is quadratic.  We refer the interested reader to~\cite{Steinberg05,dKLP06,BV11} for some results in this direction.

Before describing in detail our approximation algorithms for Problem (\ref{eq:generic-lp-opt}), let us give an overview of our approach and highlight some of the key technical issues.  To fix ideas, let us first consider the case where $d=3$; i.e., $f(x) = \sum_{i,j,k=1}^n a_{ijk}x_ix_jx_k$ for some order--$3$ tensor $\mathcal{A}=(a_{ijk}) \in \R^{n\times n\times n}$.  Using by--now standard techniques (see, e.g.,~\cite{HLZ10,S11a}), one can show that the optimal value of Problem (\ref{eq:generic-lp-opt}) is within a constant factor of that of its multilinear relaxation, which in the case of $d=3$ is given by
\begin{equation} \label{eq:cubic-ml-relax}
   \max\left\{ \sum_{i,j,k=1}^n a_{ijk}x_iy_jz_k: \|x\|_p \le 1, \|y\|_p \le 1, \|z\|_p \le 1 \right\}. 
\end{equation}
Thus, as far as approximating Problem (\ref{eq:generic-lp-opt}) is concerned, it suffices to focus on Problem (\ref{eq:cubic-ml-relax}).  Although the latter generally remains NP--hard (see Proposition \ref{prop:ML-NPh} and Theorem \ref{thm:MR-NPh}), intuitively it should be easier to handle because of the decoupling of variables.  Indeed, following the ideas in~\cite{KN08,S11a}, one can show that the optimal value of Problem (\ref{eq:cubic-ml-relax}) is equal to half times the $L_q$--diameter of a certain convex body $\mathcal{K}_p$, where $q=p/(p-1) \in [1,2]$ is the conjugate of $p$.  However, the latter quantity is known to be efficiently approximable only when $p=2$.  To tackle the case where $p>2$, we do not work on $\mathcal{K}_p$ directly as in~\cite{S11a}.  Instead, we construct another convex body $\mathcal{K}_p'$ whose $L_q$--diameter is within a constant factor of the optimal value of Problem (\ref{eq:cubic-ml-relax}) but can be approximated efficiently.  The validity of our construction is established using Grothendieck's inequality---a tool that originates from functional analysis and has since found many applications in optimization and theoretical computer science; see, e.g.,~\cite{SZY07,KN12,P12}.  Consequently, we are able to approximate Problem (\ref{eq:cubic-ml-relax}) and hence also Problem (\ref{eq:generic-lp-opt}) in polynomial time for the case where $d=3$.

To extend the above results to the case where $d>3$, a natural idea is to apply recursion.  We will present two implementations of this idea, which will lead to two algorithms with different characteristics.  The first is based on the following crucial observation (see Proposition~\ref{prop1}):  Suppose that we have a deterministic approximation algorithm $\mathscr{A}_d$ for optimizing a degree--$d$ multilinear form over $L_p$--balls, where $d\ge3$.  Consider a degree--$(d+1)$ multilinear form $F$.  For any $\bar{x}^1 \in \R^n$, let $G_d(\bar{x}^1)$ be the value returned by $\mathscr{A}_d$ when applied to the degree--$d$ multilinear optimization problem
$$ \max\left\{ F(\bar{x}^1,x^2,\ldots,x^{d+1}): \|x^i\|_p \le 1 \quad \mbox{for } i=2,3,\ldots,d+1 \right\}. $$
Then, the function $G_d$ essentially defines a norm on $\R^n$.  Such a property, which was first established in~\cite{S11a} for the case where $p=2$, is extremely useful and can be of independent interest.  In particular, it allows us to utilize existing polytopal approximations of $L_p$--balls~\cite{BGK+01} to design a deterministic $\Omega\left( (\log n)^{(d-2)/p} \big/ n^{d/2-1} \right)$--approximation algorithm for Problem (\ref{eq:generic-lp-opt}). 

The second approach to implementing the recursion idea is by randomization.  Specifically, consider a degree--$d$ multilinear form $F$, where $d>3$.  It is known that if $x^2,\ldots,x^d \in \R^n$ are arbitrary and $\xi \in \R^n$ is a random vector uniformly distributed on the $L_q$--sphere, then 
$$ F(\xi,x^2,\ldots,x^d) \ge \Omega\left( \sqrt{\frac{\log n}{n}} \right) \cdot \left[ \max_{\|x\|_p \le 1} F(x,x^2,\ldots,x^d) \right] $$
holds with a probability that is at least inversely proportional to a polynomial in $n$; cf.~\cite[Lemma 3.3]{KN08}.  Using this result, it is not hard to show that any $\beta_{d-1}$--approximation algorithm for optimizing a degree--$(d-1)$ multilinear form over $L_p$--balls will yield an $\Omega(\beta_{d-1}\sqrt{\log n/n})$--approximation algorithm for Problem (\ref{eq:generic-lp-opt}).  To complete the argument, we show by induction that $\beta_{d-1}$ can be taken as $\beta_{d-1}=\Omega((\log n/n)^{(d-1)/2-1})$.  This gives an $\Omega((\log n/n)^{d/2-1})$--approximation algorithm for Problem (\ref{eq:generic-lp-opt}).  It should be noted that unlike the deterministic algorithm described above, the algorithm obtained using this approach is randomized and thus will only attain the stated approximation ratio with high probability.  However, it is much easier to implement than its deterministic counterpart.

The rest of the paper is organized as follows.  Section \ref{sec:prelim} contains the preliminaries.   In Section~\ref{sec:lp-hardness}, we show that the problem of optimizing a homogeneous polynomial of fixed degree over an $L_p$--ball is NP--hard.  Then, in Section \ref{sec:MR}, we introduce a multilinear relaxation of the $L_p$--ball constrained homogeneous polynomial optimization problem and show that it is equivalent to the latter from an approximation perspective.  We also discuss the hardness of the multilinear relaxation.  In Section \ref{sec:ml-diam}, we develop both deterministic and randomized polynomial--time approximation algorithms for the problem of optimizing a multilinear form over $L_p$--balls by relating it to the problem of determining the diameters of certain convex bodies.  Finally, we conclude with some closing remarks in Section \ref{sec:concl}.

\section{Preliminaries} \label{sec:prelim}
We begin with the notation and definitions used in this paper.  A \emph{tensor} is a multidimensional array, and the \emph{order} of a tensor is the number of dimensions.  Let $\mathcal{A}=(a_{i_{1}i_{2}\cdots i_{d}})\in \R^{n_{1}\times n_{2}\times \cdots \times n_{d}}$ be a tensor of order \emph{d}.  We denote its $(i_1,i_2,\ldots,i_d)$--th element by either $a_{i_1i_2\cdots i_d}$ or $[\mathcal{A}]_{i_1i_2\cdots i_d}$.  We say that $\mathcal{A}$ is \emph{non--zero} if at least one of its elements is non--zero, and is \emph{cubical} if $n_{1}=n_{2}=\cdots=n_{d}$. A cubical tensor is said to be \emph{super--symmetric} if every element $a_{i_{1}i_{2}\cdots i_{d}}$ is invariant under any permutation of the indices.

Let $K$ and $j_{1},j_{2},\ldots,j_{K}$ be integers such that $1\leq K \leq d$ and $1\leq j_{1}< j_{2}<\cdots< j_{K}\leq d$. Furthermore, let $x^{j_{k}}\in \mathbb{R}^{n_{j_{k}}}$, where $k=1,\ldots,K$, be given vectors. We use $\mathcal{A}(x^{j_{1}},x^{j_{2}},\ldots,x^{j_{K}})$ to denote the order--$(d-K)$ tensor obtained by ``summing out'' the indices $j_{1},j_2,\ldots,j_{K}$ from the order--$d$ tensor $\mathcal{A}=(a_{i_{1}i_{2}\cdots i_{d}})\in \mathbb{R}^{n_{1}\times n_{2}\times \cdots \times n_{d}}$ using $x^{j_1},x^{j_2},\ldots,x^{j_K}$. For instance, if $K=2$, $j_{1}=2$ and $j_{2}=4$, then 
$$\mathcal{A}(x^{2},x^{4})_{i_{1}i_{3}i_{5}i_{6}\cdots i_{d}}=\sum_{i_{2}=1}^{n_{2}}\sum_{i_{4}=1}^{n_{4}}a_{i_{1}i_{2}\cdots i_{d}}x_{i_{2}}^{2}x_{i_{4}}^{4}. $$

Given an order--$d$ tensor $\mathcal{A}=(a_{i_{1}i_{2}\cdots i_{d}})\in \R^{n_{1}\times n_{2}\times \cdots \times n_{d}}$, we can associate with it a multilinear form $F_{\mathcal{A}}: \R^{n_{1}}\times \R^{n_{2}}\times \cdots \times \R^{n_{d}}\rightarrow \R$ via
$$F_{\mathcal{A}}(x^{1},x^{2},\ldots,x^{d})=\sum_{i_{1}=1}^{n_{1}}\cdots\sum_{i_{d}=1}^{n_{d}}a_{i_{1}i_{2}\cdots i_{d}}x_{i_{1}}^{1}x_{i_{2}}^{2} \cdots x_{i_{d}}^{d}. $$
If $\mathcal{A}$ is super--symmetric with $n_{1}=n_{2}=\cdots=n_{d}=n$, then we can further associate with it a homogeneous degree--$d$ polynomial $f_{\mathcal{A}}: \R^{n}\rightarrow \R$ via
$$ f_{\mathcal{A}}(x) = F_{\mathcal{A}}(x,x,\ldots,x)=\sum_{1\leq i_{1},\ldots,i_{d}\leq n}a_{i_{1}i_{2}\cdots i_{d}}x_{i_{1}}x_{i_{2}} \cdots x_{i_{d}}. $$
In general, even if $\mathcal{A}$ is not super--symmetric or even cubical, it is still possible to relate the multilinear form $F_\mathcal{A}$ to a certain homogeneous degree--$d$ polynomial via symmetrization~\cite{RvL11}.  To introduce this procedure, we need some preliminary definitions.  Let $\pi=(\pi_1,\pi_2,\ldots,\pi_d)$ be a permutation of the set $\{1,2,\ldots,d\}$.  The \emph{$\pi$--transpose} of $\mathcal{A}=(a_{i_1i_2\cdots i_d}) \in \R^{n_1\times n_2\times\cdots\times n_d}$ is the order--$d$ tensor $\mathcal{A}^{\pi} = (\bar{a}_{i_{\pi_1}i_{\pi_2}\cdots i_{\pi_d}}) \in \R^{n_{\pi_1} \times n_{\pi_2} \times \cdots \times n_{\pi_d}}$ whose elements are given by
$$ \bar{a}_{i_{\pi_1}i_{\pi_2}\cdots i_{\pi_d}} = a_{i_1i_2\cdots i_d} \quad\mbox{for } i_j = 1,\ldots,n_j; \, j=1,\ldots,d. $$
Let $N=n_1+n_2+\cdots+n_d$ and partition the index set $\{1,\ldots,N\}$ into sets of consecutive integers as follows:
\begin{equation} \label{eq:partition}
\{1,\ldots,N\} = \bigcup_{j=1}^d B_j, \quad\mbox{where } B_j = \left\{ \sum_{i=1}^{j-1} n_i + 1, \ldots, \sum_{i=1}^{j} n_i \right\}.
\end{equation}
Given an arbitrary cubical order--$d$ tensor $\mathcal{B} \in \R^{N^d}$ and $\chi_i \in \{1,\ldots,d\}$ for $i=1,\ldots,d$, the \emph{$(\chi_1,\ldots,\chi_d)$--th block} of $\mathcal{B}$ is defined as the sub--tensor
$$ \mathcal{B}_{\chi_1\chi_2\cdots\chi_d} = (b_{i_1i_2\cdots i_d})_{i_j \in B_{\chi_j};\, j=1,\ldots,d} \in \R^{n_{\chi_1}\times n_{\chi_2} \times \cdots \times n_{\chi_d}}. $$
Armed with these definitions, we define the \emph{symmetrization} of $\mathcal{A}=(a_{i_1i_2\cdots i_d}) \in \R^{n_1\times n_2\times \cdots \times n_d}$ as the order--$d$ cubical tensor $\mbox{sym}(\mathcal{A}) \in \R^{N^d}$ whose blocks are given by
$$ \left[ \mbox{sym}(\mathcal{A}) \right]_{\chi_1\chi_2\cdots\chi_d} = \left\{ 
\begin{array}{c@{\quad}l}
   \mathcal{A}^\chi & \mbox{if } \chi=(\chi_1,\chi_2,\ldots,\chi_d) \mbox{ is a permutation of } \{1,2,\ldots,d\}, \\
   \noalign{\medskip}
   \bz & \mbox{otherwise}.
\end{array}
\right.
$$
For instance, when $d=2$, $\mathcal{A}$ is an $n_1\times n_2$ matrix, and its symmetrization is given by the well--known construction
$$ \mbox{sym}(\mathcal{A}) = \left[ \begin{array}{cc} \bz & \mathcal{A} \\ \noalign{\smallskip} \mathcal{A}^T & \bz \end{array} \right]. $$
More generally, it is known that the tensor $\mbox{sym}(\mathcal{A})$ enjoys the following properties~\cite{RvL11}:
\begin{enumerate}
   \item $\mbox{sym}(\mathcal{A})$ is super--symmetric.  In particular, it can be associated with a homogeneous degree--$d$ polynomial $f_{\text{sym}(\mathcal{A})}$.

   \item For every $x = \left[ \, (x^1)^T \, (x^2)^T \, \cdots \, (x^d)^T \, \right]^T \in \R^N$, where $x^i \in \R^{n_i}$ and $i=1,\ldots,d$, we have 
\begin{equation} \label{eq:sym-id}
   f_{\text{sym}(\mathcal{A})}(x) = d! \cdot F_\mathcal{A}(x^1,x^2,\ldots,x^d).
\end{equation}
\end{enumerate}

Now, let $d \geq 3$ and $p \in [2,\infty]$ be given. Let $\mathcal{A}=(a_{i_{1}i_{2}\cdots i_{d}})\in \R^{n^{d}}$ be an arbitrary non--zero super--symmetric tensor of order $d$, and let $f_{\mathcal{A}}: \R^{n}\rightarrow \R$ be the corresponding homogeneous polynomial.  Our main objective in this paper is to study the algorithmic aspects of the following $L_p$--ball constrained homogeneous polynomial optimization problem:
$$
({\sf HP}) \qquad
\begin{array}{ccc@{\quad}l}
   \bar{v} &=& \mbox{maximize} & \displaystyle{ f_{\mathcal{A}}(x)\equiv \sum_{1\leq i_{1},\ldots,i_{d}\leq n}a_{i_{1}i_{2}\cdots i_{d}}x_{i_{1}}x_{i_{2}}\cdots x_{i_{d}} } \\
   \noalign{\medskip}
   & & \mbox{subject to} & \|x\|_p \le 1, \, x \in \R^n.
\end{array}
$$

\section{Hardness of $L_p$--Ball Constrained Homogeneous Polynomial Optimization} \label{sec:lp-hardness}
We begin with the following result, which concerns the complexity of Problem $({\sf HP})$:
\begin{thm} \label{thm:HP-NPh}
Problem $({\sf HP})$ is NP--hard for any given $d\ge3$ and $p\in[2,\infty]$.
\end{thm}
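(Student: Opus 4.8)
The plan is to prove Theorem~\ref{thm:HP-NPh} by a chain of polynomial--time reductions ending at a known NP--hard problem. The two main ingredients are the symmetrization $\mbox{sym}(\cdot)$ of Ragnarsson and Van Loan, which converts a multilinear optimization problem over $L_p$--balls into an instance of $({\sf HP})$, and a ``coordinate--indicator'' padding, which raises the degree (or the number of modes) without changing the optimal value. Applying these in turn reduces the general case to a base case concerning a low--degree $L_p$--ball optimization problem, for which NP--hardness is available in (or can be derived from) the literature.

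\emph{From multilinear to homogeneous.} First I would show that if maximizing a nonzero degree--$d$ multilinear form over $d$ copies of the unit $L_p$--ball is NP--hard, then so is $({\sf HP})$ for the same $d$ and $p$. Given $\mathcal{B}\in\R^{n_1\times\cdots\times n_d}$, put $N=n_1+\cdots+n_d$ and $\mathcal{A}=\mbox{sym}(\mathcal{B})\in\R^{N^d}$. By the two properties of $\mbox{sym}(\cdot)$ recorded in Section~\ref{sec:prelim}, $\mathcal{A}$ is nonzero and super--symmetric, and $f_{\mathcal{A}}(x)=d!\cdot F_{\mathcal{B}}(x^1,\ldots,x^d)$ whenever $x=[\,(x^1)^T\,\cdots\,(x^d)^T\,]^T$ by~(\ref{eq:sym-id}). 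Since $F_{\mathcal{B}}$ is linear in each block, optimizing the block directions and the block norms $r_i=\|x^i\|_p$ separately and invoking the AM--GM inequality on $r_1^p,\ldots,r_d^p$ subject to $\sum_i r_i^p\le1$ gives
$$ \max\{f_{\mathcal{A}}(x):\|x\|_p\le1\} \;=\; d!\cdot d^{-d/p}\cdot\max\{F_{\mathcal{B}}(x^1,\ldots,x^d):\|x^i\|_p\le1,\ i=1,\ldots,d\}. $$
Since $d!\,d^{-d/p}$ is a fixed positive constant and $N^d$ is polynomial in the input size for fixed $d$, this is a valid reduction; this is precisely where the symmetrization machinery of Section~\ref{sec:prelim} enters.

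\emph{Raising the degree.} Next I would reduce the degree--$3$ multilinear problem over $L_p$--balls to the degree--$d$ one for every $d\ge3$. Given $\mathcal{C}\in\R^{n_1\times n_2\times n_3}$, I build a genuine degree--$d$ tensor $\mathcal{B}$, with the $d-3$ new modes of dimension $n_1$, supported only on the first coordinate of each new mode, so that $F_{\mathcal{B}}(x^1,\ldots,x^d)=\bigl(\prod_{j=4}^{d}x^{j}_{1}\bigr)F_{\mathcal{C}}(x^1,x^2,x^3)$. Since $|x^{j}_{1}|\le\|x^{j}\|_p$ with equality at $x^j=e_1$, and since $\max|F_{\mathcal{C}}|=\max F_{\mathcal{C}}\ge0$ over the three $L_p$--balls (flip the sign of a block), the optimal value over the $d$ balls equals that of the trilinear problem $\max\{F_{\mathcal{C}}(x^1,x^2,x^3):\|x^i\|_p\le1,\ i=1,2,3\}$. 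Combining this with the previous reduction, it suffices to prove, for every $p\in[2,\infty]$, that the trilinear problem is NP--hard.

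\emph{The base case and the main difficulty.} Finally I would establish the trilinear base case, and I expect this to be the crux. When $p=\infty$ the maximum over the three cubes is attained at $\{-1,1\}$--vectors, and the coordinate--indicator padding (now used to attach a third mode to a bilinear form) reduces $\max\{x^TMy:\|x\|_\infty\le1,\|y\|_\infty\le1\}$ to it; the latter equals $\|M\|_{\infty\to1}$ and is NP--hard, since it encodes the maximum--cut problem. When $p=2$ the trilinear problem is precisely the spectral norm of a $3$--tensor, which is known to be NP--hard; see, e.g.,~\cite{HL09}. For $p\in(2,\infty)$ the underlying bilinear problem $\max\{x^TMy:\|x\|_p\le1,\|y\|_p\le1\}$ is the mixed matrix norm $\|M\|_{p\to q}$ with $q=p/(p-1)\in(1,2)$, and one invokes---or reproves---its NP--hardness (see, e.g.,~\cite{BN01b,Steinberg05,KNS10}) before lifting it to the trilinear case by the same padding. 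The delicate point is uniformity in $p$: for $p=\infty$ the $L_p$--ball is a polytope and classical Boolean--optimization hardness transfers almost verbatim, but for $p\in[2,\infty)$---and especially at $p=2$, where the \emph{quadratic} problem over an $L_2$--ball is merely an eigenvalue computation and hence easy---the hardness must genuinely exploit the degree being at least $3$, so one cannot simply inflate a hard quadratic instance. Obtaining a single NP--hardness statement that covers the entire interval $[2,\infty]$, together with checking that the $L_p$--ball behaves as required in the two padding arguments, is where the real work lies; everything else is routine.
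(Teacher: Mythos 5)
Your proposal is correct and follows essentially the same route as the paper: pad a known NP--hard base case (the $p\to q$ matrix norm of Steinberg for $p>2$, the trilinear $L_2$ problem for $p=2$) up to degree $d$ with dummy modes, then pass to Problem $({\sf HP})$ via $\mbox{sym}(\cdot)$ and the identity (\ref{eq:sym-id}). Your AM--GM computation of the factor $d!\,d^{-d/p}$ is a slightly more direct substitute for the paper's Proposition \ref{prop:ml-sym-eqv} (which establishes the same value identity by showing each block has unit norm at an optimum), but the reduction is otherwise identical.
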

The proof of Theorem \ref{thm:HP-NPh} consists of two steps.  First, we show that the problem of maximizing a degree--$d$ {\it multilinear form} over $L_p$--balls is NP--hard for any given $d\ge3$ and $p\in[2,\infty]$.  Then, we give a polynomial--time reduction of this problem to Problem $({\sf HP})$ using the symmetrization procedure introduced in Section \ref{sec:prelim}, thereby proving the NP--hardness of the latter.

To begin, let us formally define the problem used in the first step.

\begin{center}
\begin{tabular}{|c@{\qquad}l|} \hline
   & Let $\mathcal{A}=(a_{i_1i_2\cdots i_d}) \in \R^{n_1\times n_2\times \cdots \times n_d}$ be an arbitrary non--zero order--$d$ tensor, and \\
   \multirow{3}{*}{$({\sf ML})$} & let $F_\mathcal{A}:\R^{n_1}\times\R^{n_2}\times\cdots\times\R^{n_d} \limto \R$ be the corresponding multilinear form. Solve \\
   & \vspace{-0.3cm} \\
   & \multicolumn{1}{c|}{$\begin{array}{ccc@{\quad}l} v_{\sf ML}(\mathcal{A},d) &=& \mbox{maximize} & F_\mathcal{A}(x^1,x^2,\ldots,x^d) \\
   \noalign{\medskip}
   & & \mbox{subject to} & \|x^{i}\|_{p} \le 1, \, x^i \in \R^{n_i} \quad \mbox{for } i = 1,\ldots,d.
\end{array}$} \\ \hline
\end{tabular}
\end{center}

\noindent We then have the following result:

\begin{prop} \label{prop:ML-NPh}
   Problem $({\sf ML})$ is NP--hard for any given $d\ge3$ and $p\in[2,\infty]$.
\end{prop}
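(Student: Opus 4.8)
The plan is to reduce a known NP-hard problem to $({\sf ML})$ for each fixed $d \ge 3$ and $p \in [2,\infty]$. The natural starting point is the case $d = 3$, from which higher degrees can be obtained by a simple ``padding'' trick. For $d = 3$, I would reduce from a hard quadratic problem; a convenient candidate is the problem of computing $\max\{ y^T M z : \|y\|_\infty \le 1, \|z\|_\infty \le 1 \}$ for a symmetric (or even just square) matrix $M$, which is equivalent to the $\ell_\infty \to \ell_1$ norm and is well known to be NP-hard (it contains MAX-CUT-type problems). More precisely, since we need hardness for \emph{every} $p \in [2,\infty]$, not just $p = \infty$, the cleaner route is to reduce from a $0/1$ combinatorial problem directly. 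I would take an instance of a suitable NP-hard problem — e.g., deciding whether a graph has a cut of a given size, or the problem already used in the literature (cf.\ \cite{N03,HL09,ZQY12}) for the quadratic/cubic $L_p$ setting — and encode it as a degree-$3$ multilinear form in such a way that the optimal $x^1$ can be taken at a vertex of the $L_p$-ball.

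The key idea that makes a single construction work for all $p \in [2,\infty]$ is to introduce an auxiliary ``selector'' vector whose optimal value is forced to lie at an extreme point of the $L_p$-ball regardless of $p$. Concretely, I would build $\mathcal{A} \in \R^{n_1 \times n_2 \times n_3}$ so that $F_\mathcal{A}(x^1, x^2, x^3)$ decomposes as a bilinear form in $(x^2, x^3)$ whose coefficient matrix is itself selected by $x^1$; by choosing the combinatorial gadget carefully (for instance, making the form linear in $x^1$ with coefficient vectors that are standard basis vectors, so that $\|x^1\|_p \le 1$ is maximized at $\pm e_i$ for all $p$), the inner optimization over $x^2, x^3$ reduces to an instance of the known hard matrix problem, while the outer maximization over $x^1$ just picks the best column. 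One then checks that a separation between yes- and no-instances of the source problem translates into a computable threshold on $v_{\sf ML}(\mathcal{A}, 3)$. The degree-$3$ case being settled, for general $d > 3$ I would append $d - 3$ extra factors of the scalar variable and pad $\mathcal{A}$ with a single nonzero ``slab'': set $\tilde{\mathcal{A}} \in \R^{n_1 \times \cdots \times n_3 \times 1 \times \cdots \times 1}$ with $\tilde{a}_{i_1 i_2 i_3 1 \cdots 1} = a_{i_1 i_2 i_3}$, so that $F_{\tilde{\mathcal{A}}}(x^1, \ldots, x^d) = F_\mathcal{A}(x^1, x^2, x^3) \cdot x^4_1 \cdots x^d_1$; since each $\|x^j\|_p \le 1$ forces $|x^j_1| \le 1$ and this is achieved with equality at $x^j = e_1$, we get $v_{\sf ML}(\tilde{\mathcal{A}}, d) = v_{\sf ML}(\mathcal{A}, 3)$, and the reduction is clearly polynomial.

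I expect the main obstacle to be finding a gadget that is genuinely $p$-independent: for a fixed $p \in (2, \infty)$, the vertices of the $L_p$-ball are still $\pm e_i$, but generic feasible points have fractional coordinates, so one must ensure the combinatorial structure of the reduction does not allow a ``cheating'' fractional solution to beat every integral one. This is exactly the point where the choice of source problem matters, and where I would lean on the fact (standard in this literature) that for the $\ell_\infty \to \ell_1$ matrix norm the optimum is attained at vertices, combined with the observation that $\|x\|_p \le 1$ and $\|x\|_\infty \le 1$ have the \emph{same} vertex set $\{\pm e_i\}$ — so a reduction that only ever ``uses'' vertices of the feasible region is automatically insensitive to which $p \in [2,\infty]$ we pick. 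Making that last point rigorous — i.e., arguing that an optimal solution to the constructed $({\sf ML})$ instance can always be perturbed to one supported on vertices without decreasing the objective — is the delicate step, and I would handle it by exploiting multilinearity: fixing all but one block, the objective is linear in the remaining block, hence maximized at a vertex of the $L_p$-ball, and iterating this coordinate-block argument reduces any optimizer to a vertex solution.
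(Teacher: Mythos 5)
Your overall skeleton matches the paper's: settle $d=3$ first and then lift to general $d$ by padding the tensor with scalar factors $z_i$, $|z_i|\le 1$ (this padding step is exactly what the paper does). The problem is the core of your $d=3$ argument for general $p$. Your construction rests on two claims that are false for $p\in(2,\infty)$ (and for $p=\infty$ as stated): that the ``vertex set'' of $\{\|x\|_p\le 1\}$ is $\{\pm e_i\}$, and that a linear functional over the $L_p$--ball is maximized at such a vertex. For $p\in(2,\infty)$ the ball is strictly convex, so \emph{every} boundary point is extreme and there is no distinguished vertex structure; for $p=\infty$ the extreme points are the sign vectors $\{-1,1\}^n$, not $\pm e_i$. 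Concretely, if after fixing $x^2,x^3$ the coefficient vector of $x^1$ is $w=\big((x^2)^TM_1x^3,\ldots,(x^2)^TM_{n_1}x^3\big)$, then $\max_{\|x^1\|_p\le1}w^Tx^1=\|w\|_q$ with the maximizer $x^1_i\propto\mathrm{sgn}(w_i)|w_i|^{q-1}$, a fractional, $p$--dependent point. So the outer maximization does \emph{not} ``pick the best column''; it aggregates all slices in the $\ell_q$ norm, the optimal value varies continuously with $p$, and the yes/no threshold of your combinatorial source problem no longer survives the reduction. The iterate--over--blocks argument you propose to repair this only shows the optimum lies on the boundary of each ball, which is not the vertex statement you need. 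There is also a structural obstruction at $p=2$: if your selector gadget really did make $v_{\sf ML}(\mathcal{A},3)=\max_i\max_{x^2,x^3}(x^2)^TM_ix^3$, then for $p=2$ each inner problem is a spectral norm computation, so the instances your reduction produces would be polynomial--time solvable and no hardness could follow; any correct proof for $p=2$, $d=3$ must use genuinely trilinear hardness.

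The paper sidesteps the need for a $p$--independent gadget altogether: for each fixed $p\in(2,\infty]$ it reduces from the matrix $p\to q$ norm problem $\|B\|_{p\to q}=\max\{\|By\|_q:\|y\|_p\le1\}$, whose NP--hardness for every such $p$ is known (Steinberg), writing $\|B\|_{p\to q}=\max_{\|x\|_p\le1,\|y\|_p\le1}x^TBy$ by H\"older duality and then padding with $d-2$ scalar factors; for $p=2$ it instead invokes the known NP--hardness of trilinear optimization over $L_2$--balls ($d=3$) from He, Li and Zhang and pads with $d-3$ scalar factors. To fix your proposal you would similarly need to import a hardness source that is already parameterized by $p$ (or treat $p=2$ separately), rather than trying to make a single $0/1$ gadget insensitive to $p$ via an extreme--point argument that does not hold.
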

\begin{proof}
Let $d\ge3$ and $p\in[2,\infty]$ be fixed.  Consider first the case where $p\in(2,\infty]$.  Our plan is to reduce the following problem---which is known to be NP--hard~\cite{Steinberg05}---to Problem $({\sf ML})$:

\begin{center}
\begin{tabular}{|c@{\qquad}l|} \hline
   & Let $B \in \R^{m \times n}$ and $p \in (2,\infty]$ be given. Let $q=p/(p-1)$ be the \\
   $({\sf NORM})$ & conjugate of $p$.  Compute $\|B\|_{p\limto q}$, the $p\limto q$ norm of $B$, where \\
   & \multicolumn{1}{c|}{$\|B\|_{p\limto q} = \max\{ \|By\|_q : \|y\|_p \le 1 \}$.} \\ \hline
\end{tabular}
\end{center}
\smallskip
\noindent Towards that end, suppose that we are given an instance of Problem $({\sf NORM})$.  By H\"{o}lder's inequality, we have
\begin{eqnarray*}
   \max_{\|y\|_p \le 1} \|By\|_q &=& \max_{\|x\|_p \le 1, \, \|y\|_p \le 1} x^TBy \\
   \noalign{\medskip}
   &=& \max_{ {\|x\|_p \le 1, \, \|y\|_p \le 1} \atop {|z_1|,\ldots,|z_{d-2}| \le 1}} \left( \prod_{i=1}^{d-2} z_i \right) x^TBy \\
   \noalign{\medskip}
   &=& \max_{ {\|x\|_p \le 1, \, \|y\|_p \le 1} \atop { |z_1|,\ldots,|z_{d-2}| \le 1}} F_\mathcal{A}\left( z_1,z_2,\ldots,z_{d-2},x,y \right),
\end{eqnarray*}
where $\mathcal{A} = (a_{1,\ldots,1,i,j}) \in \R^{1\times\cdots\times1\times m \times n}$ is the order--$d$ tensor with $a_{1,\ldots,1,i,j} = b_{ij}$ for $i=1,\ldots,m$ and $j=1,\ldots,n$; $F_\mathcal{A}:\R\times\cdots\times\R\times\R^m\times\R^n \limto \R$ is the multilinear form associated with $\mathcal{A}$.  This establishes the NP--hardness of Problem $({\sf ML})$ when $d\ge3$ and $p\in(2,\infty]$.

Next, consider the case where $p=2$.  It has been shown in~\cite[Proposition 2]{HLZ10} that Problem $({\sf ML})$ is NP--hard when $d=3$ and $p=2$.  Now, let $\mathcal{B}=(b_{ijk}) \in \R^{n_1\times n_2\times n_3}$ be an arbitrary non--zero order--$3$ tensor, and let $F_\mathcal{B}:\R^{n_1}\times\R^{n_2}\times\R^{n_3}\limto\R$ be the corresponding multilinear form.  Using similar argument as above, for any given $d\ge4$, we have
\begin{eqnarray*}
\max_{\|w\|_2,\,\|x\|_2,\,\|y\|_2 \le 1} F_\mathcal{B}(w,x,y) &=& \max_{ {\|w\|_2,\,\|x\|_2,\,\|y\|_2 \le 1} \atop {|z_1|,\ldots,|z_{d-3}| \le 1} } \left( \prod_{i=1}^{d-3} z_i \right) F_\mathcal{B}(w,x,y) \\
\noalign{\medskip}
&=& \max_{ {\|w\|_2,\,\|x\|_2,\,\|y\|_2 \le 1} \atop {|z_1|,\ldots,|z_{d-3}| \le 1} } F_\mathcal{A}(z_1,z_2,\ldots,z_{d-3},w,x,y),
\end{eqnarray*}
where $\mathcal{A} = (a_{1,\ldots,1,i,j,k}) \in \R^{1\times\cdots\times1\times n_1 \times n_2 \times n_3}$ is the order--$d$ tensor with $a_{1,\ldots,1,i,j,k} = b_{ijk}$ for $i=1,\ldots,n_1$, $j=1,\ldots,n_2$ and $k=1,\ldots,n_3$; $F_\mathcal{A}:\R\times\cdots\times\R\times\R^{n_1}\times\R^{n_2}\times\R^{n_3} \limto \R$ is the multilinear form associated with $\mathcal{A}$.  Thus, we conclude that when $p=2$, Problem $({\sf ML})$ remains NP--hard for each fixed $d\ge3$.
\end{proof}

\medskip
Next, we have the following proposition, which links the optimization of the multilinear form associated with a tensor $\mathcal{A}$ to that of the homogeneous polynomial associated with $\mbox{sym}(\mathcal{A})$.
\begin{prop} \label{prop:ml-sym-eqv}
   Let $d\ge2$ and $p \in [2,\infty]$ be given, and let $\mathcal{A} \in \R^{n_1\times n_2\times \cdots \times n_d}$ be an arbitrary non--zero order--$d$ tensor.  Set $N=n_1+n_2+\cdots+n_d$.  Consider the optimization problems
\begin{equation} \label{eq:ml-opt}
\begin{array}{c@{\quad}l}
\mbox{maximize} & d! \cdot F_\mathcal{A}(x^1,x^2,\ldots,x^d) \\
\noalign{\medskip}
\mbox{subject to} & \|x^i\|_p \le 1, \, x^i \in \R^{n_i} \quad\mbox{for } i=1,\ldots,d
\end{array}
\end{equation}
and
\begin{equation} \label{eq:sym-opt}
\begin{array}{c@{\quad}l}
\mbox{maximize} & f_{\text{sym}(\mathcal{A})}(z) \\
\noalign{\medskip}
\mbox{subject to} & \|z\|_p \le d^{1/p}, \, z \in \R^N,
\end{array}
\end{equation}
where $F_\mathcal{A}$ is the multilinear form associated with $\mathcal{A}$ and $f_{\text{sym}(\mathcal{A})}$ is the homogeneous polynomial associated with the symmetrization of $\mathcal{A}$ (see Section \ref{sec:prelim}).  Let $(\bar{x}^1,\bar{x}^2,\ldots,\bar{x}^d) \in \R^{n_1} \times \R^{n_2} \times \cdots \times \R^{n_d}$ and $\bar{z}=\left[ \, (\bar{z}^1)^T \, (\bar{z}^2)^T \, \cdots \, (\bar{z}^d)^T \, \right]^T$, where $\bar{z}^i \in \R^{n_i}$ for $i=1,\ldots,d$, be optimal solutions to problems (\ref{eq:ml-opt}) and (\ref{eq:sym-opt}), respectively.  Then, the following hold:
\begin{enumerate}
\item[\subpb] $\|\bar{z}^i\|_p=1$ for $i=1,\ldots,d$.

\item[\subpb] $(\bar{z}^1,\bar{z}^2,\ldots,\bar{z}^d) \in \R^{n_1\times n_2\times \cdots \times n_d}$ and $\bar{x} = \left[ \, (\bar{x}^1)^T \, (\bar{x}^2)^T \, \cdots \, (\bar{x}^d)^T \, \right]^T \in \R^N$ are optimal solutions to problems (\ref{eq:ml-opt}) and (\ref{eq:sym-opt}), respectively.
\end{enumerate}
\resetspb
\end{prop}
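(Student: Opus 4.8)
The plan is to relate the two problems through the key identity~(\ref{eq:sym-id}), namely $f_{\text{sym}(\mathcal{A})}(x) = d!\cdot F_\mathcal{A}(x^1,\ldots,x^d)$ for $x = [(x^1)^T\,\cdots\,(x^d)^T]^T$, and to exploit the multilinear (degree--one in each block) structure of $F_\mathcal{A}$. First I would observe that~(\ref{eq:sym-id}) immediately gives a two--way correspondence between feasible points: given any feasible $(x^1,\ldots,x^d)$ for~(\ref{eq:ml-opt}) (so $\|x^i\|_p\le1$ for all $i$), the stacked vector $x$ satisfies $\|x\|_p^p = \sum_i \|x^i\|_p^p \le d$, hence $\|x\|_p \le d^{1/p}$, so $x$ is feasible for~(\ref{eq:sym-opt}) with the same objective value. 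Conversely, if $z = [(z^1)^T\,\cdots\,(z^d)^T]^T$ is feasible for~(\ref{eq:sym-opt}), I will rescale each block. The crucial point is that $\|z^i\|_p$ may exceed $1$, but since $F_\mathcal{A}$ is homogeneous of degree one in each argument, replacing $z^i$ by $z^i/\|z^i\|_p$ (whenever $z^i\neq\bz$) multiplies the objective by $\prod_i \|z^i\|_p^{-1}$. By the power--mean (or weighted AM--GM) inequality, $\prod_i \|z^i\|_p \le \left( \frac{1}{d}\sum_i \|z^i\|_p^p \right)^{d/p} = (d^{-1}\|z\|_p^p)^{d/p} \le 1$, where the last step uses $\|z\|_p \le d^{1/p}$. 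Hence the normalized point is feasible for~(\ref{eq:ml-opt}) and has objective value at least $d!\cdot F_\mathcal{A}(z^1,\ldots,z^d)/\prod_i\|z^i\|_p \ge d!\cdot F_\mathcal{A}(z^1,\ldots,z^d) = f_{\text{sym}(\mathcal{A})}(z)$. (The case where some $z^i = \bz$ forces the objective to be $0$, which is dominated by the optimum since $\mathcal{A}$ is non--zero and $d\ge2$, so it is harmless.) This shows the two optimal values coincide; call it $v^\ast > 0$.

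Part~(a) now follows by examining where equality holds in the chain above applied to the optimal $\bar{z}$. Since $f_{\text{sym}(\mathcal{A})}(\bar{z}) = v^\ast > 0$, we have $\prod_i \|\bar{z}^i\|_p > 0$, so every block is non--zero and the normalization is legitimate; the normalized point $(\bar{z}^1/\|\bar{z}^1\|_p, \ldots)$ is feasible for~(\ref{eq:ml-opt}) with value $v^\ast / \prod_i\|\bar{z}^i\|_p$, which cannot exceed $v^\ast$, forcing $\prod_i\|\bar{z}^i\|_p \ge 1$. Combined with the AM--GM bound $\prod_i\|\bar{z}^i\|_p \le (d^{-1}\|\bar{z}\|_p^p)^{d/p}\le 1$, we get $\prod_i\|\bar{z}^i\|_p = 1$ and equality throughout the AM--GM step, which (since $p\in[2,\infty)$, or by a direct argument for $p=\infty$) holds only when all $\|\bar{z}^i\|_p$ are equal; together with the product being $1$ this yields $\|\bar{z}^i\|_p = 1$ for every $i$.

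Part~(b) is then a direct consequence: since $\|\bar{z}^i\|_p = 1$ for all $i$, the point $(\bar{z}^1,\ldots,\bar{z}^d)$ is feasible for~(\ref{eq:ml-opt}), and its objective value is $d!\cdot F_\mathcal{A}(\bar{z}^1,\ldots,\bar{z}^d) = f_{\text{sym}(\mathcal{A})}(\bar{z}) = v^\ast$, so it is optimal; conversely, stacking the optimal $\bar{x}^1,\ldots,\bar{x}^d$ gives a feasible $\bar{x}$ for~(\ref{eq:sym-opt}) of value $d!\cdot F_\mathcal{A}(\bar{x}^1,\ldots,\bar{x}^d) = v^\ast$, so $\bar{x}$ is optimal for~(\ref{eq:sym-opt}). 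I expect the main technical care to be in the AM--GM equality analysis and in handling the $p=\infty$ endpoint cleanly (where $\|z\|_\infty \le d^{1/\infty} = 1$ and the product bound $\prod_i\|z^i\|_\infty \le \max_i \|z^i\|_\infty \cdot \text{(other factors)} \le 1$ needs a slightly different but equally elementary justification); everything else is bookkeeping built on~(\ref{eq:sym-id}) and the per--argument homogeneity of the multilinear form.
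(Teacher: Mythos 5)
Your proof is correct, but the mechanism you use for part (a) differs from the paper's. The paper works directly with the optimizer $\bar{z}$ of (\ref{eq:sym-opt}) rewritten via (\ref{eq:sym-id}) as a problem with the coupled constraint $\sum_{i=1}^d \|z^i\|_p^p \le d$: assuming some block has $\|\bar{z}^j\|_p^p = \theta \ne 1$, it rescales that block by $\theta^{-1/p}$ and the remaining blocks by $((d-1)/(d-\theta))^{1/p}$, and shows via the strict convexity of $t \mapsto ((d-t)^{d-1}t)^{-1/p}$ on $(0,d)$ (minimized at $t=1$) that the positive objective strictly increases, a contradiction; part (b) then follows, and equality of the two optimal values is a consequence. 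You invert this order: you first prove the two optimal values coincide (stacking in one direction, block--normalization plus the AM--GM bound $\prod_i\|z^i\|_p \le (d^{-1}\|z\|_p^p)^{d/p} \le 1$ in the other), and then extract (a) from the equality case of AM--GM together with $\prod_i\|\bar{z}^i\|_p \ge 1$ forced by optimality. Your route avoids the paper's calculus verification and gives the value--equality as an explicit byproduct, at the cost of needing the equality analysis of AM--GM; the paper's perturbation argument is more local and yields (a) without first comparing the two problems. Two small points you should make explicit: the strict positivity of the common optimal value (it follows because $\mathcal{A}$ is non--zero, by evaluating $F_\mathcal{A}$ at suitably signed standard basis vectors), which both you and the paper tacitly use, and the fact that your inequality $d!\,F_\mathcal{A}(z^1,\ldots,z^d)/\prod_i\|z^i\|_p \ge d!\,F_\mathcal{A}(z^1,\ldots,z^d)$ is only valid when $F_\mathcal{A}(z^1,\ldots,z^d)\ge 0$ --- this is harmless since you only need it at (or near) the optimum of (\ref{eq:sym-opt}), where the value is positive, but as written you assert it for an arbitrary feasible $z$. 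Your separate treatment of $p=\infty$ (each $\|z^i\|_\infty \le 1$, product $\ge 1$ forces every factor to equal $1$) matches the paper's observation that in that case (\ref{eq:sym-opt}) decouples into exactly the form (\ref{eq:ml-opt}).
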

\begin{proof}
Let us first consider the case where $p=\infty$.  By (\ref{eq:sym-id}), Problem (\ref{eq:sym-opt}) is equivalent to
$$
\begin{array}{c@{\quad}l}
\mbox{maximize} & d! \cdot F_{\mathcal{A}}(z^1,z^2,\ldots,z^d) \\
\noalign{\medskip}
\mbox{subject to} & \|z^i\|_\infty \le 1, \, z^i \in \R^{n_i} \quad\mbox{for } i=1,\ldots,d,
\end{array}
$$
which has exactly the same form as Problem (\ref{eq:ml-opt}).  Thus, the desired results follow immediately.

Now, consider the case where $p\in[2,\infty)$.  To prove (a), we again appeal to (\ref{eq:sym-id}), which implies the equivalence of Problem (\ref{eq:sym-opt}) and the following problem:
$$
\begin{array}{c@{\quad}l}
\mbox{maximize} & d! \cdot F_{\mathcal{A}}(z^1,z^2,\ldots,z^d) \\
\noalign{\medskip}
\mbox{subject to} & \displaystyle{ \sum_{i=1}^d \|z^i\|_p^p \le d, } \\
\noalign{\medskip}
& z^i \in \R^{n_i} \quad\mbox{for } i=1,\ldots,d.
\end{array}
$$
Since $\mathcal{A}$ is non--zero, we must have $\sum_{i=1}^d \|\bar{z}^i\|_p^p = d$ and $\|\bar{z}^i\|_p > 0$ for $i=1,\ldots,d$.  Now, suppose that $\|\bar{z}^j\|_p^p = \theta \not= 1$ for some $j \in \{1,\ldots,d\}$.  Then, we have $\sum_{i\not=j} \|\bar{z}^i\|_p^p = d-\theta > 0$.  Upon setting
$$ \hat{z}^i = \left\{
\begin{array}{r@{\quad}l}
\displaystyle{ \left( \frac{d-1}{d-\theta} \right)^{1/p} \bar{z}^i } & \mbox{if } i \not= j, \\
\noalign{\medskip}
\displaystyle{ \theta^{-1/p} \bar{z}^j } & \mbox{otherwise},
\end{array}
\right.
$$
we obtain
$$ \|\hat{z}^j\|_p^p = 1, \quad \sum_{i=1}^d \|\hat{z}^i\|_p^p = \frac{d-1}{d-\theta} \sum_{i\not=j} \|\bar{z}^i\|_p^p + \|\hat{z}^j\|_p^p = d $$
and
\begin{equation} \label{eq:scaled}
F_\mathcal{A}(\hat{z}^1,\hat{z}^2,\ldots,\hat{z}^d) = (d-1)^{\frac{d-1}{p}} \cdot \left( (d-\theta)^{d-1}\theta \right)^{-1/p} \cdot F_\mathcal{A}(\bar{z}^1,\bar{z}^2,\ldots,\bar{z}^d).
\end{equation}
In particular, we see that $\hat{z}=\left[ \, (\hat{z}^1)^T \, (\hat{z}^2)^T \, \cdots \, (\hat{z}^d)^T \, \right]^T \in \R^N$ is feasible for (\ref{eq:sym-opt}).  It is easy to verify that the function $t \mapsto ((d-t)^{d-1}t))^{-1/p}$ is strictly convex on $(0,d)$ and is minimized at $t=1$.  Since $\theta\not=1$ and $\mathcal{A}$ is non--zero, it follows from (\ref{eq:scaled}) that $F_\mathcal{A}(\hat{z}^1,\hat{z}^2,\ldots,\hat{z}^d) > F_\mathcal{A}(\bar{z}^1,\bar{z}^2,\ldots,\bar{z}^d)$, which contradicts the optimality of $\bar{z}$.  Thus, we have $\|\bar{z}^i\|_p^p=1$ for $i=1,\ldots,d$, as desired.

To prove (b), we first observe that since $\bar{x} = \left[ \, (\bar{x}^1)^T \, (\bar{x}^2)^T \, \cdots \, (\bar{x}^d)^T \, \right]^T \in \R^N$ is feasible for Problem (\ref{eq:sym-opt}) and $f_{\text{sym}(\mathcal{A})}(\bar{x}) = d!\cdot F_\mathcal{A}(\bar{x}^1,\bar{x}^2,\ldots,\bar{x}^d)$ by (\ref{eq:sym-id}), we have $f_{\text{sym}(\mathcal{A})}(\bar{z}) \ge d!\cdot F_\mathcal{A}(\bar{x}^1,\bar{x}^2,\ldots,\bar{x}^d)$.  Now, the result in (a) implies that $(\bar{z}^1,\bar{z}^2,\ldots,\bar{z}^d) \in \R^{n_1\times n_2\times \cdots \times n_d}$ is feasible for Problem (\ref{eq:ml-opt}), and hence using (\ref{eq:sym-id}) we obtain $f_{\text{sym}(\mathcal{A})}(\bar{z}) = d!\cdot F_\mathcal{A}(\bar{z}^1,\bar{z}^2,\ldots,\bar{z}^d) \le d!\cdot F_\mathcal{A}(\bar{x}^1,\bar{x}^2,\ldots,\bar{x}^d)$.  This completes the proof.
\end{proof}

\medskip
We are now ready to complete the proof of Theorem \ref{thm:HP-NPh}.

\medskip
\noindent{\bf Proof of Theorem \ref{thm:HP-NPh}}\,\,\, Proposition \ref{prop:ml-sym-eqv} implies that Problem $({\sf ML})$ is equivalent to 
$$
\begin{array}{c@{\quad}l}
\mbox{maximize} & f_{\text{sym}(\mathcal{A})}(z) \\
\noalign{\medskip}
\mbox{subject to} & \|z\|_p \le 1, \, z \in \R^N,
\end{array}
$$
where $N=n_1+n_2+\cdots+n_d$.  The latter is clearly an instance of Problem $({\sf HP})$.  Moreover, when $d\ge3$ is fixed, the size of $\mbox{sym}(\mathcal{A})$ is polynomial in $n_1,n_2,\ldots,n_d$.  Thus, for any given $d\ge3$ and $p\in[2,\infty]$, we can reduce Problem $({\sf ML})$ to Problem $({\sf HP})$ in polynomial time, which implies that the latter is NP--hard, as desired. \endproof


\section{$L_{p}$--Ball Constrained Homogeneous Polynomial Optimization and Its Multilinear Relaxation} \label{sec:MR}
In view of Theorem \ref{thm:HP-NPh}, we now turn our attention to the task of designing polynomial--time approximation algorithms for Problem $({\sf HP})$ with provable guarantees.  Towards that end, consider the following multilinear relaxation of Problem $({\sf HP})$:
$$
({\sf MR}) \qquad
\begin{array}{ccc@{\quad}l}
   v^* &=& \mbox{maximize} & \displaystyle{ F_{\mathcal{A}}(x^{1},x^{2},\ldots,x^{d}) \equiv \sum_{1\leq i_{1},\ldots,i_{d}\leq n}a_{i_{1}i_{2}\cdots i_{d}}x_{i_{1}}^{1}x_{i_{2}}^{2}\cdots x_{i_{d}}^{d} } \\
   \noalign{\medskip}
   & & \mbox{subject to} & \|x^{i}\|_{p} \le 1, \, x^i \in \R^{n_i} \quad \mbox{for } i = 1,\ldots,d.
\end{array}
$$
Since $f_\mathcal{A}(x)=F_\mathcal{A}(x,x,\ldots,x)$ for all $x\in\R^n$ and $x=\bz$ is feasible for Problem $({\sf HP})$, we clearly have $v^* \ge \bar{v} \ge 0$.  Our motivation for studying Problem $({\sf MR})$ comes from the following result, which essentially states that $v^*$ and $\bar{v}$ are within a constant factor of each other when $d\ge3$ is fixed.
\begin{thm}\label{hpml}
Let $d\ge 3$ and $p \in [2,\infty]$ be given.  Suppose there is a polynomial--time algorithm $\mathscr{A}_{\sf MR}$ that, given any instance of Problem $({\sf MR})$, returns a feasible solution whose objective value is at least $\alpha v^*$ for some $\alpha \in(0,1]$.  Then, there is a polynomial--time algorithm $\mathscr{A}_{\sf HP}$ that, given any instance of Problem $({\sf HP})$, returns a solution $\hat{x}\in \R^{n}$ with $\| \hat{x} \|_{p}\leq 1$ and
$$
\begin{array}{rcl@{\qquad}l}
  f_\mathcal{A}(\hat{x}) &\ge& \alpha\cdot d!\cdot d^{-d}\cdot v^* \\
  \noalign{\medskip}
  &\ge& \alpha\cdot d!\cdot d^{-d}\cdot \bar{v} & \mbox{for odd } d\ge3, \\
  \noalign{\bigskip}
  f_\mathcal{A}(\hat{x}) - \underline{v} &\ge& 2\alpha\cdot d!\cdot d^{-d}\cdot v^* \\
  \noalign{\medskip}
  &\ge& \alpha\cdot d!\cdot d^{-d}\cdot (\bar{v} - \underline{v}) & \mbox{for even } d\ge4,
\end{array}
$$
where $\underline{v} = \min_{\|x\|_p \le 1} f_\mathcal{A}(x)$.  In other words, the algorithm $\mathscr{A}_{\sf HP}$ has an \emph{approximation guarantee} (resp.~\emph{relative approximation guarantee}) of $\alpha \cdot d! \cdot d^{-d}$ when $d$ is odd (resp.~even).
\end{thm}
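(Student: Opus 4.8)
The plan is to pass from a good approximate solution $(\hat x^1, \dots, \hat x^d)$ of Problem $({\sf MR})$ to a single good approximate solution of Problem $({\sf HP})$ by a randomization-and-rounding argument. Concretely, apply $\mathscr{A}_{\sf MR}$ to get $(\hat x^1,\dots,\hat x^d)$ with $F_\mathcal{A}(\hat x^1,\dots,\hat x^d) \ge \alpha v^*$. Then draw i.i.d.\ Rademacher signs $\epsilon_1,\dots,\epsilon_d \in \{-1,+1\}$ and consider the candidate point $x(\epsilon) = \tfrac{1}{d}\sum_{i=1}^d \epsilon_i \hat x^i$. Since each $\|\hat x^i\|_p \le 1$, the triangle inequality gives $\|x(\epsilon)\|_p \le 1$, so $x(\epsilon)$ is feasible for $({\sf HP})$ for every sign pattern. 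Expanding $f_\mathcal{A}(x(\epsilon)) = F_\mathcal{A}(x(\epsilon),\dots,x(\epsilon))$ by multilinearity and taking expectation over $\epsilon$, the only monomials that survive are those in which each $\hat x^i$ appears an even number of times; because there are $d$ slots and $d$ distinct vectors, ``each appears an even number of times'' forces ``each appears exactly once'' (when $d$ is odd this is impossible, but one also picks up the diagonal-type cancellations — see below). Using super-symmetry of $\mathcal{A}$, the surviving term is exactly $\tfrac{d!}{d^d} F_\mathcal{A}(\hat x^1,\dots,\hat x^d)$, so
\[
  \mathbb{E}_\epsilon\big[ f_\mathcal{A}(x(\epsilon)) \big] = \frac{d!}{d^d}\, F_\mathcal{A}(\hat x^1,\dots,\hat x^d) \ge \alpha\cdot d!\cdot d^{-d}\cdot v^*.
\]

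For \emph{odd} $d$, the map $x \mapsto -x$ sends feasible points to feasible points and flips the sign of $f_\mathcal{A}$, so in fact $\max_\epsilon f_\mathcal{A}(x(\epsilon)) \ge \mathbb{E}_\epsilon[f_\mathcal{A}(x(\epsilon))]$ with room to spare; more carefully, by symmetrizing over $\epsilon \mapsto -\epsilon$ one can even assume the diagonal contributions average out, and there are only $2^d = O(1)$ sign patterns to check (since $d$ is fixed), so derandomization is trivial — just take the best $x(\epsilon)$. Combined with $v^* \ge \bar v$ from the discussion preceding Theorem~\ref{hpml}, this yields $f_\mathcal{A}(\hat x) \ge \alpha\cdot d!\cdot d^{-d}\cdot \bar v$. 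For \emph{even} $d$ one cannot flip signs, so instead one argues relatively: let $\hat x$ be the best of $x(\epsilon)$ and $\underline{v}$'s minimizer isn't needed — rather, observe $\mathbb{E}_\epsilon[f_\mathcal{A}(x(\epsilon)) - \underline v] \ge \tfrac{d!}{d^d} F_\mathcal{A}(\hat x^1,\dots,\hat x^d) - \underline v$, and since $\underline v \le 0 \le v^*$ one massages this to $\max_\epsilon f_\mathcal{A}(x(\epsilon)) - \underline v \ge 2\alpha\cdot d!\cdot d^{-d}\cdot v^*$; then $v^* \ge \tfrac12(\bar v - \underline v)$ — which holds because for even $d$ the multilinear relaxation value dominates both $\bar v$ and $-\underline v = \max_{\|x\|_p\le 1}(-f_\mathcal{A}(x))$, each being at most $v^*$ by plugging a diagonal point into $F_\mathcal{A}$ — gives the stated relative guarantee $\alpha\cdot d!\cdot d^{-d}$.

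The routine parts are the expansion of $\mathbb{E}_\epsilon[f_\mathcal{A}(x(\epsilon))]$ via the multinomial theorem together with super-symmetry, and checking feasibility. The one step requiring care — and the main obstacle — is the even-$d$ case: getting the factor-of-$2$ improvement in the expectation bound and then correctly relating $v^*$ to $\bar v - \underline v$. For the former, the point is that when $d$ is even, both $+x(\epsilon)$ and $-x(\epsilon)$ are feasible and give the \emph{same} value of $f_\mathcal{A}$, which is what lets one discard the (sign-indefinite) cross terms and keep only the clean diagonal term, effectively doubling the yield relative to the naive expectation; I would make this rigorous by averaging $f_\mathcal{A}(x(\epsilon))$ over $\epsilon$ and separately bounding the residual terms using the fact that each such term, being a product over a non-trivial partition of the $d$ slots, can be absorbed into $v^*$ up to sign. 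For the latter, one uses that $-\underline v$ and $\bar v$ are both optimal values of degree-$d$ homogeneous polynomial maximization over the $L_p$-ball and hence each is at most $v^*$ (the multilinear relaxation bound, applied to $f_\mathcal{A}$ and to $-f_\mathcal{A}$), so $\bar v - \underline v \le 2v^*$. Assembling these pieces gives the theorem; the derandomization is immediate since $d$ is a fixed constant, so enumerating all $2^d$ choices of $\epsilon$ and returning the best is polynomial time.
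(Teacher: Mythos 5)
Your central identity is false, and the error is a parity mix-up. Writing $x(\epsilon)=\frac{1}{d}\sum_{j}\epsilon_j\hat x^j$ and expanding by multilinearity,
$$ \E{f_\mathcal{A}(x(\epsilon))} \;=\; \frac{1}{d^d}\sum_{j_1,\ldots,j_d}\E{\epsilon_{j_1}\cdots\epsilon_{j_d}}\,F_\mathcal{A}(\hat x^{j_1},\ldots,\hat x^{j_d}), $$
and $\E{\epsilon_{j_1}\cdots\epsilon_{j_d}}$ is nonzero precisely when every index occurs with \emph{even} multiplicity. The term you want to keep, $F_\mathcal{A}(\hat x^1,\ldots,\hat x^d)$ with each vector appearing exactly once, has every multiplicity equal to $1$ (odd) and is therefore annihilated, not retained; your inference ``each appears an even number of times forces each appears exactly once'' is exactly backwards. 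Concretely, for odd $d$ the unweighted expectation $\E{f_\mathcal{A}(x(\epsilon))}$ is identically $0$ (no multi-index with all-even multiplicities sums to an odd $d$), so ``take the best of the $2^d$ sign patterns'' only yields $f_\mathcal{A}(\hat x)\ge 0$; and for even $d$ the expectation consists entirely of the unwanted cross terms (e.g.\ terms of type $(1,1,2,2)$ when $d=4$) and contains none of $F_\mathcal{A}(\hat x^1,\ldots,\hat x^d)$. The missing ingredient is the polarization formula used by the paper: one must weight by $\prod_{i}\epsilon_i$, i.e.
$$ \E{\Bigl(\prod_{i=1}^d\epsilon_i\Bigr) f_\mathcal{A}\Bigl(\sum_{j=1}^d\epsilon_j\hat x^j\Bigr)} \;=\; d!\cdot F_\mathcal{A}(\hat x^1,\ldots,\hat x^d), $$
since with that weight the surviving multi-indices are those where every index has odd multiplicity, which with $d$ slots and $d$ indices does force a permutation. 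For odd $d$ the sign $\prod_i\epsilon_i$ can then be absorbed into the argument by odd-degree homogeneity, and enumerating the $2^d$ patterns works as you intended.

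Your even-$d$ case inherits the same flaw and, in addition, the ``massaging using $\underline v\le 0$'' does not produce the factor $2$. In the paper the factor $2$ comes from a specific mechanism: split the weighted expectation by conditioning on $\prod_i\xi_i=1$ versus $\prod_i\xi_i=-1$ (each of probability $1/2$), note that inserting the constant $\underline v$ into both conditional expectations changes nothing, and then discard the negatively-signed half using that $f_\mathcal{A}\bigl(\frac{1}{d}\sum_j\xi_j x^j\bigr)-\underline v\ge 0$ pointwise (feasibility of $\frac{1}{d}\sum_j\xi_j x^j$ plus the definition of $\underline v$). This yields a sign pattern $\beta$ with $\prod_i\beta_i=1$ and $f_\mathcal{A}\bigl(\frac{1}{d}\sum_j\beta_j\hat x^j\bigr)-\underline v\ge \frac{2\,d!}{d^d}F_\mathcal{A}(\hat x^1,\ldots,\hat x^d)$; nothing in your sketch substitutes for this step. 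The one piece you do have right is the final comparison $\bar v-\underline v\le 2v^*$ (both $\bar v$ and $-\underline v$ are bounded by $v^*$, the latter because $-F_\mathcal{A}(x,\ldots,x)=F_\mathcal{A}(-x,x,\ldots,x)\le v^*$), which matches the paper's use of $v^*\ge\bar v\ge\underline v\ge -v^*$; but that alone does not repair the main computation.
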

For a proof of Theorem \ref{hpml}, see Appendix \ref{app:hpml}.  We remark that for the case where $p=2$, an analogous result has been established in~\cite{HLZ10}; cf.~\cite[Theorem 1]{S11a}.

Theorem \ref{hpml} shows that any algorithm for solving Problem $({\sf MR})$ will translate into an algorithm for approximating Problem $({\sf HP})$.  Although it seems intuitive that Problem $({\sf MR})$ is NP--hard as well, such a result does not follow directly from Proposition~\ref{prop:ML-NPh}, as the tensor associated with the objective function in Problem $({\sf ML})$ is not required to be super--symmetric or even cubical.  The following theorem fills this gap:
\begin{thm} \label{thm:MR-NPh}
   Problem $({\sf MR})$ is NP--hard for any given $d\ge3$ and $p\in[2,\infty]$.
\end{thm}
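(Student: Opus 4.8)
The plan is to reduce Problem $({\sf ML})$---which is NP--hard for every fixed $d\ge3$ and $p\in[2,\infty]$ by Proposition~\ref{prop:ML-NPh}---to Problem $({\sf MR})$; the only discrepancy between the two is that $({\sf ML})$ permits an arbitrary order--$d$ tensor whereas $({\sf MR})$ requires a super--symmetric cubical one, and the symmetrization operator of Section~\ref{sec:prelim} is precisely the tool that closes this gap. Concretely, given a non--zero tensor $\mathcal{A}\in\R^{n_1\times\cdots\times n_d}$, I would form $\mathcal{B}=\mathrm{sym}(\mathcal{A})\in\R^{N^d}$ with $N=n_1+\cdots+n_d$. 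By the properties recalled in Section~\ref{sec:prelim}, $\mathcal{B}$ is super--symmetric and non--zero, and for fixed $d$ its size is polynomial in $n_1+\cdots+n_d$, so $\mathcal{B}$ is a legitimate instance of Problem $({\sf MR})$.

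Next I would express the multilinear form $F_{\mathrm{sym}(\mathcal{A})}$ in terms of block--decomposed arguments. Writing $x^i=\left[\,(x^i_{[1]})^T\,\cdots\,(x^i_{[d]})^T\,\right]^T\in\R^N$ with $x^i_{[j]}\in\R^{n_j}$ according to the partition~(\ref{eq:partition}), the definition of the blocks of $\mathrm{sym}(\mathcal{A})$ gives, after a routine unwinding,
\begin{equation*}
F_{\mathrm{sym}(\mathcal{A})}(x^1,\ldots,x^d)=\sum_{\sigma\in S_d} F_\mathcal{A}\left(x^{\sigma(1)}_{[1]},x^{\sigma(2)}_{[2]},\ldots,x^{\sigma(d)}_{[d]}\right),
\end{equation*}
where $S_d$ denotes the permutations of $\{1,\ldots,d\}$; for $d=2$ this is exactly the content of the classical block construction $\mathrm{sym}(\mathcal{A})=\left[\begin{smallmatrix}\bz&\mathcal{A}\\\mathcal{A}^T&\bz\end{smallmatrix}\right]$. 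Armed with this identity, the key claim is that the optimal value $v^*(\mathrm{sym}(\mathcal{A}))$ of Problem $({\sf MR})$ for $\mathcal{B}=\mathrm{sym}(\mathcal{A})$ satisfies
\begin{equation*}
v^*(\mathrm{sym}(\mathcal{A}))=P(d,p)\cdot v_{\sf ML}(\mathcal{A},d),
\end{equation*}
where $P(d,p)$ is the maximum of $\mathrm{perm}(M)$ over all nonnegative $d\times d$ matrices $M$ whose rows each have $\ell_p$--norm at most $1$ (for $p=\infty$, all entries at most $1$). Note that $P(d,p)$ is a positive constant---indeed $P(d,p)\ge\mathrm{perm}(I)=1$---depending only on the fixed parameters $d$ and $p$.

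To establish the claim I would argue both inequalities. For the upper bound: a non--zero multilinear form is odd in each argument, so $v_{\sf ML}(\mathcal{A},d)>0$, and by homogeneity $F_\mathcal{A}(y^1,\ldots,y^d)\le v_{\sf ML}(\mathcal{A},d)\prod_{i=1}^d\|y^i\|_p$ for all $y^i\in\R^{n_i}$. Substituting this into the displayed expansion and putting $M_{ij}=\|x^i_{[j]}\|_p$ bounds $F_{\mathrm{sym}(\mathcal{A})}(x^1,\ldots,x^d)$ above by $v_{\sf ML}(\mathcal{A},d)\cdot\mathrm{perm}(M)$, while the feasibility constraint $\|x^i\|_p\le1$ forces each row of $M$ to have $\ell_p$--norm at most $1$; maximizing over feasible $x^i$ then yields $v^*(\mathrm{sym}(\mathcal{A}))\le P(d,p)\,v_{\sf ML}(\mathcal{A},d)$. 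For the lower bound: let $(\bar{y}^1,\ldots,\bar{y}^d)$ be an optimal solution of Problem $({\sf ML})$, which may be taken with $\|\bar{y}^i\|_p=1$ for all $i$ (any maximizer has this property, since $F_\mathcal{A}$ is linear in each argument and $v_{\sf ML}(\mathcal{A},d)>0$), and let $M^*$ attain $P(d,p)$. Setting $x^i_{[j]}=M^*_{ij}\,\bar{y}^j$ produces a feasible point for Problem $({\sf MR})$ whose objective value, by the displayed expansion and multilinearity, equals $\mathrm{perm}(M^*)\cdot v_{\sf ML}(\mathcal{A},d)=P(d,p)\,v_{\sf ML}(\mathcal{A},d)$.

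Finally, since $P(d,p)$ is a fixed positive constant---its value can even be recovered by running any candidate algorithm for $({\sf MR})$ on $\mathrm{sym}(\mathcal{A}_0)$, where $\mathcal{A}_0\in\R^{1\times\cdots\times1}$ is the tensor whose unique entry is $1$ (so $v_{\sf ML}(\mathcal{A}_0,d)=1$)---any polynomial--time algorithm that computes the optimum of Problem $({\sf MR})$ also computes that of Problem $({\sf ML})$, via $v_{\sf ML}(\mathcal{A},d)=v^*(\mathrm{sym}(\mathcal{A}))/P(d,p)$. Combined with Proposition~\ref{prop:ML-NPh}, this proves that Problem $({\sf MR})$ is NP--hard for every fixed $d\ge3$ and $p\in[2,\infty]$. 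I expect the main obstacle to be the exact evaluation $v^*(\mathrm{sym}(\mathcal{A}))=P(d,p)\,v_{\sf ML}(\mathcal{A},d)$---in particular the lower--bound construction, which spreads an $({\sf ML})$--optimal solution across the $d$ diagonal blocks with coefficients drawn from a maximum--permanent matrix---whereas verifying the block expansion of $F_{\mathrm{sym}(\mathcal{A})}$ is a secondary bookkeeping exercise and the $p=\infty$ case (where $\|x^i\|_\infty\le1$ decouples the blocks and $P(d,\infty)=d!$) can be handled separately with no extra difficulty.
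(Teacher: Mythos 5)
Your proposal is correct, and although it shares the paper's skeleton---reduce $({\sf ML})$ (NP--hard by Proposition~\ref{prop:ML-NPh}) to $({\sf MR})$ via symmetrization and the permutation expansion of $F_{\mathrm{sym}(\mathcal{A})}$ (the paper's Proposition~\ref{prop:ml-sym-ml})---it replaces the paper's key quantitative step by a genuinely different and much softer one. The paper proves the exact identity $\tau(A_d)=\tau(B_d)$ (Proposition~\ref{prop:ml-symml-eqv}), i.e.\ it pins down the optimal distribution of block norms, and this is where all the heavy lifting sits: the delicate analysis of $f_n$ and its first--order conditions in Proposition~\ref{prop:aux-max}, plus an induction on $d$ and a solution--correspondence argument. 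You instead establish only that $v^*(\mathrm{sym}(\mathcal{A}))=P(d,p)\,v_{\sf ML}(\mathcal{A},d)$ for an instance--independent constant $P(d,p)$, the maximal permanent of a nonnegative $d\times d$ matrix with rows of $\ell_p$--norm at most one: the upper bound follows from homogeneity of $F_\mathcal{A}$ together with the permanent expansion, and the lower bound from the block assignment $x^i_{[j]}=M^*_{ij}\bar y^j$, where the normalization $\|\bar y^i\|_p=1$ of an $({\sf ML})$--maximizer and the existence of $M^*$ are correctly justified by $v_{\sf ML}>0$ and compactness (note $M^*$ is used only existentially, so its possibly irrational entries cause no computational issue). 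Since the reduction never needs the numerical value of $P(d,p)$---you calibrate it by one oracle call on the trivial instance $\mathcal{A}_0$, or simply observe it is a fixed positive constant for fixed $d,p$ (in fact $P(d,p)=d!\,d^{-d/p}$, which is precisely what Proposition~\ref{prop:aux-max} amounts to)---your argument bypasses the paper's hardest lemma entirely. What the paper's route buys in exchange is the explicit constant and the correspondence of optimal solutions (useful beyond bare NP--hardness, e.g.\ for the unit--norm structure exploited elsewhere), and a value--preserving reduction with a known factor, whereas yours is a Turing reduction with a calibration call; for Theorem~\ref{thm:MR-NPh} as stated, your shorter argument is sufficient and at the same level of rigor as the paper's treatment of exact real-valued optima.
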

The proof of Theorem \ref{thm:MR-NPh} is quite involved and can be found in Appendix \ref{app:MR-NPh}.  We remark that Theorem \ref{thm:MR-NPh} is, to the best of our knowledge, the first hardness result for the problem of optimizing a {\it super--symmetric} multilinear form that holds for {\it any} given $d\ge3$ and $p\in[2,\infty]$; cf.~\cite{HL09}.

\section{$L_p$--Ball Constrained Multilinear Optimization and Diameters of Convex Bodies} \label{sec:ml-diam}
Given that both Problem $({\sf MR})$ and Problem $({\sf ML})$ are NP--hard, we shall study the slightly more general Problem $({\sf ML})$, where the focus will be on developing approximation algorithms with provable guarantees.  Since the case where $p=2$ has already been investigated in~\cite{S11a}, we shall assume that $p\in(2,\infty]$ in the sequel.




\subsection{Base Case: Approximating $L_p$--Ball Constrained Trilinear Maximization} \label{sec:base}
Let us begin by considering the case where $d=3$.  Specifically, let $\mathcal{A}=(a_{ijk})\in \mathbb{R}^{n_{1}\times n_{2}\times n_{3}}$ be an arbitrary non--zero order--3 tensor.  Without loss of generality, we assume that $1\leq n_{1}\leq n_{2}\leq n_{3}$.  Then, Problem $({\sf ML})$ becomes
\begin{equation} \label{eq:trilinear}
\begin{array}{ccc@{\quad}l}
   v_{\sf ML}(\mathcal{A},3) &=& \mbox{maximize} & \displaystyle{ \sum_{i=1}^{n_1} \sum_{j=1}^{n_2} \sum_{k=1}^{n_3} a_{ijk} x_i^1 x_j^2 x_k^3 } \\
   \noalign{\medskip}
   & & \mbox{subject to} & \|x^{i}\|_{p} \le 1, \, x^i \in \R^{n_i} \quad \mbox{for } i = 1,2,3.
\end{array}
\end{equation}
Using the definition of $\mathcal{A}(x^1)$ and H\"{o}lder's inequality, we can express $v_{\sf ML}(\mathcal{A},3)$ as
\begin{eqnarray}
   v_{\sf ML}(\mathcal{A},3) &=& \max_{\|x^1\|_p \le 1} \max_{\|x^2\|_p \le 1,\,\|x^3\|_p \le 1} (x^2)^T \mathcal{A}(x^1) x^3 \nonumber \\
   \noalign{\medskip}
   &=& \max_{\|x^1\|_p \le 1} \max_{\|x^3\|_p \le 1} \| \mathcal{A}(x^1)x^3 \|_q \nonumber \\
   \noalign{\medskip}
   &=& \max_{\|x^1\|_p \le 1} \| \mathcal{A}(x^1) \|_{p\limto q}, \label{maxmax}
\end{eqnarray}
where $q=p/(p-1)$ is the conjugate of $p$ and $\|\mathcal{A}(x^1)\|_{p \limto q}$ is the $p \limto q$ norm of the $n_2\times n_3$ matrix $\mathcal{A}(x^1)$.  From the above derivation, we see that Problem (\ref{eq:trilinear}) encapsulates two difficult computational tasks: (i) the computation of $\|\mathcal{A}(x^1)\|_{p\limto q}$ for any given $x^1 \in \R^{n_1}$, and (ii) the maximization of a convex function $x^1 \mapsto \|\mathcal{A}(x^1)\|_{p\limto q}$ over a convex set $B_p^{n_1}=\{ x \in \R^{n_1}: \|x\|_p \le 1\}$.  To tackle these difficulties, we proceed in two steps.  First, we show that $\|\mathcal{A}(x^1)\|_{p\limto q}$ can be approximated by another efficiently computable norm.  Then, we show that the maximization of this latter norm over $B_p^{n_1}$ is equivalent to determining the $L_q$--diameter of a certain convex body, a problem for which approximation algorithms are available.  This would in turn yield approximation algorithms for Problem (\ref{eq:trilinear}).

\medskip
\noindent {\bf Step 1: Approximating $\|B\|_{p\limto q}$ when $p \in (2,\infty]$.} The task of computing $\|B\|_{p\limto q}$ for any given $m\times n$ matrix $B$ and $p\in(2,\infty]$ is an instance of the matrix norm problem, which has been extensively studied in the literature.  In particular, Nesterov~\cite{N00} showed that $\|B\|_{p\limto q}$ can be approximated to within a factor of $\frac{2\sqrt{3}}{\pi}-\frac{2}{3}>0.435$ via a certain convex relaxation.  Later, Ben--Tal and Nemirovski~\cite{BN01b} and Steinberg~\cite{Steinberg05} established the NP--hardness of the problem and gave a more refined analysis of Nesterov's relaxation scheme.  However, the approximation bound they obtained is better than Nesterov's only when the parameters $m,n,p$ belong to a certain regime.  As it turns out, by considering a different convex relaxation, it is possible to obtain an approximation bound that uniformly improves upon that of Nesterov.  To demonstrate this, we first observe that
\begin{equation} \label{eq:pqnorm-max}
\begin{array}{ccc@{\quad}l}
   \|B\|_{p \limto q} &=& \mbox{maximize} & 
   \displaystyle{
   \frac{1}{2} \left[\begin{array}{cc} \bz & B \\ B^T & \bz \end{array}\right] \bullet \left[\begin{array}{c} y \\ z \end{array}\right] \left[\begin{array}{cc} y^T & z^T \end{array}\right] } \\
   \noalign{\medskip}
   && \mbox{subject to} & \|y\|_p \le 1, \, \|z\|_p \le 1,
\end{array}
\end{equation}
where $P \bullet Q = \mbox{tr}(P^TQ)$ denotes the Frobenius inner product of the matrices $P$ and $Q$.  Hence, by introducing the $(m+n)\times(m+n)$ positive semidefinite (psd) matrix $X$ to replace the rank--one psd matrix $(y,z)(y,z)^T$ and denoting
$$ \tilde{B} = \frac{1}{2} \left[\begin{array}{cc} \bz & B \\ B^T & \bz \end{array}\right], $$
we obtain the following relaxation of $\|B\|_{p \limto q}$:
\begin{equation} \label{eq:pqnorm-cvx}
{\sf vec}_p(B) = \left\{
\begin{array}{l@{\quad}l}
\displaystyle{ \max\left\{ \tilde{B} \bullet X : \sum_{i=1}^m |X_{ii}|^{p/2} \le 1, \, \sum_{j=m+1}^{m+n} |X_{jj}|^{p/2} \le 1, \, X \succeq \bz \right\} } & \mbox{for } p \in (2,\infty), \\
\noalign{\medskip}
\displaystyle{ \max\left\{ \tilde{B} \bullet X : \max_{1\le i\le m} |X_{ii}| \le 1, \, \max_{m+1\le j \le m+n} |X_{jj}| \le 1, \, X \succeq \bz \right\} } & \mbox{for } p = \infty.
\end{array}
\right.
\end{equation}

Note that for $p>2$, Problem (\ref{eq:pqnorm-cvx}) is a convex program that can be solved to arbitrary accuracy in polynomial time using, e.g., the ellipsoid method~\cite{GLS93} (cf.~\cite{KNS10}).  Moreover, the following simple observation of Khot and Naor~\cite{KN12} shows that the ratio between ${\sf vec}_p(B)$ and $\|B\|_{p \limto q}$ is bounded above by the Grothendieck constant $K_G$, which is known to be strictly less than $\frac{\pi}{2\ln(1+\sqrt{2})} < 1.783$~\cite{BMMN11}.  
\begin{prop} \label{prop:grothendieck}
   The following inequalities hold:
$$ \|B\|_{p \limto q} \le {\sf vec}_p(B) \le K_G \cdot \|B\|_{p \limto q}. $$
\end{prop}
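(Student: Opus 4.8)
The plan is to establish the two inequalities separately. The left inequality $\|B\|_{p \limto q} \le {\sf vec}_p(B)$ is the ``easy'' direction and amounts to checking that ${\sf vec}_p(B)$ is a genuine relaxation of the formulation (\ref{eq:pqnorm-max}): given any feasible $(y,z)$ for Problem (\ref{eq:pqnorm-max}), the rank--one matrix $X = (y,z)(y,z)^T$ is psd, satisfies $X_{ii} = y_i^2$ for $i = 1,\ldots,m$ and $X_{jj} = z_{j-m}^2$ for $j = m+1,\ldots,m+n$, so that $\sum_{i=1}^m |X_{ii}|^{p/2} = \|y\|_p^p \le 1$ and likewise for the $z$--block (and analogously with $\max$ in place of the sum when $p = \infty$); hence $X$ is feasible for Problem (\ref{eq:pqnorm-cvx}) with $\tilde B \bullet X$ equal to the objective value of $(y,z)$. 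Taking the supremum over all such $(y,z)$ and invoking (\ref{eq:pqnorm-max}) gives $\|B\|_{p\limto q} = \max_{\|y\|_p \le 1, \|z\|_p \le 1} \tilde B \bullet (y,z)(y,z)^T \le {\sf vec}_p(B)$.

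For the right inequality I would follow the rounding argument of Khot and Naor. Let $X \succeq \bz$ be optimal for Problem (\ref{eq:pqnorm-cvx}), and write $X = U^T U$ so that its columns are vectors $u_1,\ldots,u_{m+n}$ in some Euclidean space with $X_{k\ell} = \langle u_k, u_\ell \rangle$; in particular $\|u_k\|_2^2 = X_{kk}$. Set $s_i = X_{ii}^{1/2}$ for $i = 1,\ldots,m$ and $t_j = X_{(m+j)(m+j)}^{1/2}$ for $j = 1,\ldots,n$, so the diagonal constraints read $\sum_i s_i^p \le 1$ and $\sum_j t_j^p \le 1$ (with the obvious $\ell_\infty$ reading when $p = \infty$), i.e. $s \in B_p^m$ and $t \in B_p^n$. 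Now $\tilde B \bullet X = \sum_{i,j} B_{ij} \langle u_i, u_{m+j} \rangle = \sum_{i,j} B_{ij}\, s_i t_j \langle \hat u_i, \hat u_{m+j}\rangle$ where $\hat u_k = u_k / \|u_k\|_2$ are unit vectors (indices with $X_{kk}=0$ contribute nothing and can be discarded). By the definition of the Grothendieck constant, for this array of unit vectors there exist signs $\varepsilon_i, \delta_j \in \{-1,+1\}$ with
$$ \sum_{i,j} B_{ij}\, s_i t_j\, \varepsilon_i \delta_j \;\ge\; \frac{1}{K_G} \sum_{i,j} B_{ij}\, s_i t_j \langle \hat u_i, \hat u_{m+j} \rangle \;=\; \frac{1}{K_G}\, \tilde B \bullet X \;=\; \frac{{\sf vec}_p(B)}{K_G}. $$
Setting $y_i = \varepsilon_i s_i$ and $z_j = \delta_j t_j$, we have $\|y\|_p = \|s\|_p \le 1$ and $\|z\|_p = \|t\|_p \le 1$, and the left side above is exactly $y^T B z \le \|B\|_{p\limto q}$ by the variational characterization of the $p \limto q$ norm (or H\"older's inequality). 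Combining gives ${\sf vec}_p(B) \le K_G \cdot \|B\|_{p\limto q}$, and the bound $K_G < \pi/(2\ln(1+\sqrt2))$ is quoted from~\cite{BMMN11}.

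\textbf{Main obstacle.} The delicate point is the precise invocation of Grothendieck's inequality: the standard statement bounds $\sum_{i,j} B_{ij} \langle \hat u_i, \hat v_j \rangle$ over unit vectors by $K_G \max_{\varepsilon,\delta} \sum_{i,j} B_{ij} \varepsilon_i \delta_j$, but here the coefficient array to which it must be applied is not $B$ itself but the rescaled array $(B_{ij} s_i t_j)_{i,j}$, and one must check that absorbing the nonnegative scalars $s_i, t_j$ into the coefficients is legitimate and that the resulting sign pattern, once the scalars are pulled back out, yields feasible points $y = \varepsilon \odot s$, $z = \delta \odot t$ for the $p\limto q$ norm problem. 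A secondary subtlety is the case $p = \infty$ (equivalently $q = 1$): here the diagonal constraints are $\max_i |X_{ii}| \le 1$ rather than an $\ell_p$ sum, so $s \in B_\infty^m$, $t \in B_\infty^n$, and the argument goes through verbatim with $\|\cdot\|_\infty$ in place of $\|\cdot\|_p$. Once these bookkeeping issues are handled, both inequalities are immediate.
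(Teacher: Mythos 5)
Your proposal is correct and follows essentially the same route as the paper: the first inequality by exhibiting the rank--one feasible matrix $(y,z)(y,z)^T$, and the second by factoring the optimal $X$, applying Grothendieck's inequality to the rescaled array $Q_{ij}=B_{ij}\,\|u_i\|_2\,\|v_j\|_2$ (your $B_{ij}s_it_j$), and folding the resulting signs back into feasible vectors $y=\varepsilon\odot s$, $z=\delta\odot t$. The only cosmetic difference is that the paper normalizes the diagonal constraints to hold with equality (using $\mathrm{diag}(\tilde B)=\bz$), whereas you keep them as inequalities and discard zero--norm columns, which works just as well.
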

For completeness, we include the proof of Proposition \ref{prop:grothendieck} here.

\medskip
\noindent\begin{proof}
   The first inequality follows readily from the fact that Problem (\ref{eq:pqnorm-cvx}) is a relaxation of Problem (\ref{eq:pqnorm-max}).  To prove the second inequality, consider an optimal solution $X^*$ to Problem (\ref{eq:pqnorm-cvx}) with $\mbox{rank}(X^*)=r\ge1$.  Let $X^*=V^TV$, where $V \in \R^{r\times(m+n)}$, be the Cholesky factorization of $X^*$.  Furthermore, let $u_i \in \R^r$ (where $i=1,\ldots,m$) and $v_j \in \R^r$ (where $j=1,\ldots,n$) be the $i$--th column and $(m+j)$--th column of $V$, respectively.  Then, by the optimality of $X^*$, we have
$$ {\sf vec}_p(B) = \tilde{B} \bullet X^* = \sum_{i=1}^m\sum_{j=1}^n B_{ij} u_i^Tv_j. $$
Moreover, since $\mbox{diag}(\tilde{B})=\bz$, we may assume that 
$$
\begin{array}{r@{\quad}l}
\|u_i\|_2 = |X_{ii}^*|^{1/2} = 1 & \mbox{for } 1\le i\le m, \\
\noalign{\medskip}
\|v_j\|_2 = |X_{jj}^*|^{1/2} = 1 & \mbox{for } m+1\le j\le m+n 
\end{array}
$$
in the case where $p=\infty$, or
$$ \sum_{i=1}^m \|u_i\|_2^p = \sum_{i=1}^m |X_{ii}^*|^{p/2} = 1, \quad \sum_{j=1}^n \|v_j\|_2^p = \sum_{j=m+1}^{m+n} |X_{jj}^*|^{p/2} = 1 $$
in the case where $p \in (2,\infty)$.  Now, define an $m \times n$ matrix $Q$ by $Q_{ij} = B_{ij}\cdot\|u_i\|_2\cdot\|v_j\|_2$, where $i=1,\ldots,m$ and $j=1,\ldots,n$.  By the Grothendieck inequality (see, e.g.,~\cite{AN06,KN12}), there exist vectors $\eta \in \{-1,1\}^m$, $\gamma \in \{-1,1\}^n$ such that
\begin{equation} \label{eq:grothendieck}
   {\sf vec}_p(B) = \sum_{i=1}^m \sum_{j=1}^n B_{ij} u_i^Tv_j = \sum_{i=1}^m \sum_{j=1}^n Q_{ij} \frac{u_i^Tv_j}{\|u_i\|_2\cdot\|v_j\|_2} \le K_G \sum_{i=1}^m \sum_{j=1}^n Q_{ij} \eta_i \gamma_j.
\end{equation}
Upon letting $\bar{y}_i=\eta_i\cdot\|u_i\|_2$ for $i=1,\ldots,m$ and $\bar{z}_j = \gamma_j\cdot\|v_j\|_2$ for $j=1,\ldots,n$, we see that $\|\bar{y}\|_p = \|\bar{z}\|_p = 1$ for $p \in (2,\infty]$;
i.e., $(\bar{y},\bar{z}) \in \R^m \times \R^n$ is feasible for Problem (\ref{eq:pqnorm-max}).  Moreover, we obtain from (\ref{eq:grothendieck}) that
$$ {\sf vec}_p(B) \le K_G \cdot \sum_{i=1}^m \sum_{j=1}^n B_{ij}\bar{y}_i\bar{z}_j \le K_G \cdot \|B\|_{p \limto q}. $$
This completes the proof.
\end{proof}

\medskip
\noindent The proof of Proposition \ref{prop:grothendieck} reveals that known algorithmic implementations of the Grothendieck inequality (see, e.g.,~\cite{AN06,BMMN11}) can be used to deliver vectors $\bar{y} \in \R^m$ and $\bar{z} \in \R^n$ that are feasible for Problem (\ref{eq:pqnorm-max}) and whose associated objective value $\bar{y}^TB\bar{z}$ is within a constant factor of $\|B\|_{p \limto q}$.  It should be noted, however, that the precise constant will depend on the particular implementation used.  For our purposes, we shall consider two different implementations of the Grothendieck inequality.  The first is a deterministic procedure introduced in~\cite{AN06}, which is based on the construction of small sample spaces with many four--wise independent random variables.  It guarantees that $K_G \le 27$, and hence by Proposition \ref{prop:grothendieck} there is a deterministic $(1/27)$--approximation algorithm for computing $\|B\|_{p \limto q}$.  Although the above procedure does not yield the best approximation bound for $\|B\|_{p \limto q}$ (in fact, it is even worse than Nesterov's bound), it will allow us to design a deterministic approximation algorithm for Problem $({\sf ML})$.  The second one is based on the so--called Krivine rounding scheme in~\cite{BMMN11}.  The resulting procedure is randomized and guarantees that $K_G < \frac{\pi}{2\ln(1+\sqrt{2})}$, which is currently the best bound on $K_G$.  Consequently, we can approximate $\|B\|_{p \limto q}$ to within a factor that is strictly larger than $\frac{2\ln(1+\sqrt{2})}{\pi}>0.561$, which is better than Nesterov's bound of $0.435$.

Based on the above discussion, we summarize our procedure for approximating $\|B\|_{p \limto q}$ in Algorithm \ref{alg:pq-norm}.

\begin{algorithm}
\caption{Procedure for Approximating $\|B\|_{p \limto q}$ when $p\in(2,\infty]$ and $q=p/(p-1)$}
\label{alg:pq-norm}
\begin{algorithmic}[1]
   \renewcommand{\algorithmicrequire}{\textbf{Input:}}
   \renewcommand{\algorithmicensure}{\textbf{Output:}}
   \renewcommand{\algorithmicforall}{\textbf{for each}}
   \REQUIRE An $m\times n$ matrix $B$, a rational number $p\in(2,\infty]$.
   \ENSURE A feasible solution $(\bar{y},\bar{z}) \in \R^m\times\R^n$ to Problem (\ref{eq:pqnorm-max}).
   \STATE Solve the convex relaxation (\ref{eq:pqnorm-cvx}) and let $X^*=V^TV$ be an optimal solution.  Let $u_1,\ldots,u_m$ and $v_1,\ldots,v_n$ be the first $m$ and last $n$ columns of $V$, respectively.

   \STATE Apply either the {\bf deterministic} rounding procedure in~\cite{AN06} or the {\bf randomized} rounding procedure in~\cite{BMMN11} to the vectors $\{u_i/\|u_i\|_2\}_{i=1}^m$ and $\{v_j/\|v_j\|_2\}_{j=1}^n$ to obtain vectors $\eta \in \{-1,1\}^m$ and $\gamma \in \{-1,1\}^n$ that satisfy (\ref{eq:grothendieck}), where $K_G \le 27$ if the deterministic procedure in~\cite{AN06} is used and $K_G < \frac{\pi}{2\ln(1+\sqrt{2})}$ if the randomized procedure in~\cite{BMMN11} is used.

   \STATE Set $\bar{y}_i = \eta_i \cdot \|u\|_2$ for $i=1,\ldots,m$ and $\bar{z}_j = \gamma_j \cdot \|v_j\|_2$ for $j=1,\ldots,n$.  Return $(\bar{y},\bar{z}) \in \R^m \times \R^n$.
\end{algorithmic}
\end{algorithm}

\medskip
\noindent{\bf Step 2: Norm Maximization and Diameters of Convex Bodies.} In view of (\ref{maxmax}) and Proposition \ref{prop:grothendieck}, we see that any $\alpha$--approximation to
\begin{equation} \label{eq:max-norm-cubic}
   \max_{\|x^1\|_p \le 1} {\sf vec}_p\left( \mathcal{A}(x^1) \right)
\end{equation}
will yield an $(\alpha/K_G)$--approximation to $v_{\sf ML}(\mathcal{A},3)$.  Hence, it suffices to focus on Problem (\ref{eq:max-norm-cubic}).  The following result shows that Problem (\ref{eq:max-norm-cubic}) is in fact equivalent to maximizing a certain norm over the $L_p$--ball.
\begin{prop} \label{prop:norm}
   Let $\mathcal{A} = (a_{ijk}) \in \R^{n_1\times n_2 \times n_3}$ be an arbitrary non--zero order--$3$ tensor.  Consider the $(n_2\times n_3) \times n_1$ matrix $A$ given by
\begin{equation} \label{eq:cubic-matrix}
A_{(j,k),i} = a_{ijk} \quad\mbox{for } i=1,\ldots,n_1; \, j=1,\ldots,n_2; \, k=1,\ldots,n_3.
\end{equation}
Suppose that $A$ has full column rank.  Then, the function $x^1 \mapsto {\sf vec}_p\left( \mathcal{A}(x^1) \right)$ defines a norm on $\R^{n_1}$.
\end{prop}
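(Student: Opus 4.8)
\medskip
\noindent\textbf{Proof proposal.} The plan is to reduce the statement to two facts: (i) $C \mapsto {\sf vec}_p(C)$ is a norm on the space of real $n_2\times n_3$ matrices; and (ii) under the hypothesis that $A$ has full column rank, the map $x^1 \mapsto \mathcal{A}(x^1)$ is an \emph{injective} linear map from $\R^{n_1}$ into that space. Granting both, the proposition is immediate, since the composition of a norm with an injective linear map is again a norm. Fact (ii) is the easy half: $[\mathcal{A}(x^1)]_{jk} = \sum_{i=1}^{n_1} a_{ijk}x^1_i$ depends linearly on $x^1$, and flattening the matrix $\mathcal{A}(x^1)$ into a vector indexed by the pairs $(j,k)$ identifies $x^1 \mapsto \mathcal{A}(x^1)$ with $x^1 \mapsto A x^1$, where $A$ is the matrix in (\ref{eq:cubic-matrix}). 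Hence $\mathcal{A}(x^1) = \bz$ iff $Ax^1 = \bz$, which by the full--column--rank assumption forces $x^1 = \bz$; i.e., the map is injective.

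The substance is in verifying (i). First I would record that the feasible region $\mathcal{F}$ of Problem (\ref{eq:pqnorm-cvx}) is bounded, contains $X=\bz$, and does not depend on $C$ (equivalently, on $\tilde{C}$). Nonnegativity and finiteness of ${\sf vec}_p$ are then clear, as is positive homogeneity (${\sf vec}_p(tC) = t\,{\sf vec}_p(C)$ for $t \ge 0$, using $\widetilde{tC} = t\tilde{C}$ and $C$--independence of $\mathcal{F}$). Subadditivity follows from the support--function structure: since $\widetilde{C_1+C_2} = \widetilde{C_1}+\widetilde{C_2}$ and ${\sf vec}_p(C) = \sup_{X\in\mathcal{F}} \tilde{C}\bullet X$, we get ${\sf vec}_p(C_1+C_2) \le {\sf vec}_p(C_1) + {\sf vec}_p(C_2)$. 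For absolute homogeneity it remains to show ${\sf vec}_p(-C) = {\sf vec}_p(C)$: letting $D$ be the diagonal $\pm1$ matrix with $-1$ in the first $n_2$ positions and $+1$ in the last $n_3$ positions, the block structure of $\tilde{C}$ gives $D\tilde{C}D = -\tilde{C}$, while $X \mapsto DXD$ maps $\mathcal{F}$ bijectively onto itself (it preserves positive semidefiniteness and leaves every diagonal entry fixed), so ${\sf vec}_p(-C) = \sup_{X\in\mathcal{F}}(-\tilde{C})\bullet X = \sup_{X\in\mathcal{F}}\tilde{C}\bullet(DXD) = \sup_{X\in\mathcal{F}}\tilde{C}\bullet X = {\sf vec}_p(C)$. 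Finally, definiteness: Proposition \ref{prop:grothendieck} gives ${\sf vec}_p(C) \ge \|C\|_{p\limto q}$, the right--hand side being strictly positive whenever $C \ne \bz$; combined with ${\sf vec}_p(\bz)=0$ this yields ${\sf vec}_p(C) = 0$ iff $C = \bz$. The same reasoning applies verbatim when $p=\infty$, using the $\max$--form of the constraints in (\ref{eq:pqnorm-cvx}).

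Combining (i) and (ii) finishes the proof: $x^1 \mapsto {\sf vec}_p(\mathcal{A}(x^1))$ inherits nonnegativity, absolute homogeneity and subadditivity from ${\sf vec}_p$ together with linearity of $x^1\mapsto\mathcal{A}(x^1)$, while definiteness follows because $x^1 \ne \bz$ implies $\mathcal{A}(x^1)\ne\bz$ and hence ${\sf vec}_p(\mathcal{A}(x^1)) > 0$. I expect the only non--routine points to be the absolute homogeneity of ${\sf vec}_p$ (handled by the sign--flip conjugation $X\mapsto DXD$) and the fact that ${\sf vec}_p$ is genuinely definite rather than merely a seminorm, which is precisely where Proposition \ref{prop:grothendieck} enters; the full--column--rank hypothesis on $A$ is exactly what upgrades the composition from a seminorm to a bona fide norm on $\R^{n_1}$. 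Everything else is bookkeeping.
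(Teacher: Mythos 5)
Your proposal is correct. It proves the same facts the paper needs, but packages them differently: the paper works directly with the composite function $x^1\mapsto{\sf vec}_p(\mathcal{A}(x^1))$, using the vector reformulation (\ref{eq:pqnorm-vec}) (derived in the proof of Proposition \ref{prop:grothendieck}) to exhibit it as a pointwise supremum of functions linear in $x^1$, whence convexity; it then gets the triangle inequality from convexity plus the (evident, at the level of (\ref{eq:pqnorm-vec})) absolute homogeneity, and proves definiteness by bounding ${\sf vec}_p(\mathcal{A}(x^1))$ below by $\max_{j,k}\left|\sum_i a_{ijk}x_i\right|$ under the full--column--rank assumption. You instead factor the statement as ``${\sf vec}_p$ is a norm on $n_2\times n_3$ matrices'' composed with the injective linear flattening $x^1\mapsto\mathcal{A}(x^1)\cong Ax^1$. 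Working at the level of the semidefinite formulation (\ref{eq:pqnorm-cvx}) rather than (\ref{eq:pqnorm-vec}) forces you to supply the sign--flip conjugation $X\mapsto DXD$ to get ${\sf vec}_p(-C)={\sf vec}_p(C)$ (a step the paper avoids, since flipping the signs of the $u_j$ in (\ref{eq:pqnorm-vec}) handles it trivially), and you obtain definiteness from the first inequality of Proposition \ref{prop:grothendieck}, ${\sf vec}_p(C)\ge\|C\|_{p\limto q}>0$ for $C\ne\bz$, rather than from the paper's explicit entrywise bound --- the two lower bounds are essentially equivalent. What your route buys is a cleaner modular statement (that ${\sf vec}_p$ is itself a matrix norm, reusable elsewhere) and a one--line derivation of the proposition from injectivity of $A$; the paper's route is slightly shorter because the vector representation makes homogeneity and the supremum-of-linear-functions structure in $x^1$ immediate. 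Both arguments are sound, and both identify the full--column--rank hypothesis as precisely what upgrades a seminorm to a norm.
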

\begin{proof}
  Using the definition of $\mathcal{A}(x^1)$ and the derivation in the proof of Proposition \ref{prop:grothendieck}, we have
\begin{equation} \label{eq:pqnorm-vec}
\begin{array}{ccc@{\quad}l}
{\sf vec}_p\left( \mathcal{A}(x^1) \right) &=& \mbox{maximize} & \displaystyle{ \sum_{i=1}^{n_1} \left( \sum_{j=1}^{n_2} \sum_{k=1}^{n_3} a_{ijk} u_j^Tv_k \right) x_i^1 } \\
\noalign{\medskip}
& & \mbox{subject to} & \|\mathbf{u}\|_p \le 1, \, \|\mathbf{v}\|_p \le 1, \\
\noalign{\medskip}
& & & \mathbf{u} = (\|u_1\|_2,\ldots,\|u_{n_2}\|_2) \in \R^{n_2}, \\
\noalign{\medskip}
& & & \mathbf{v} = (\|v_1\|_2,\ldots,\|v_{n_3}\|_2) \in \R^{n_3}
\end{array}
\end{equation}
for any $p\in(2,\infty]$.
In particular, ${\sf vec}_p(\mathcal{A}(\cdot))$ is the pointwise supremum of a collection of linear functions, which implies that ${\sf vec}_p(\mathcal{A}(\cdot))$ is convex.  Moreover, it is clear that ${\sf vec}_p\left( \mathcal{A}(kx^1) \right) = |k|\cdot{\sf vec}_p\left( \mathcal{A}(x^1) \right)$ for any $k \in \R$ and $x^1 \in \R^{n_1}$, which together with the convexity of ${\sf vec}_p(\mathcal{A}(\cdot))$ implies that ${\sf vec}_p(\mathcal{A}(\cdot))$ satisfies the triangle inequality.  Finally, let $x^1 \in \R^{n_1} \backslash \{\bz\}$ be arbitrary.  Note that $A$ has full column rank if and only if $\sum_{i=1}^{n_1} a_{ijk}x_i \not= 0$ for some $j=1,\ldots,n_2$ and $k=1,\ldots,n_3$ if and only if
$$ {\sf vec}_p\left( \mathcal{A}(x^1) \right) \ge \max_{1\le j\le n_2,\,1\le k\le n_3} \left| \sum_{i=1}^{n_1} a_{ijk}x_i \right| > 0. $$
This shows that $x^1 = \bz$ whenever ${\sf vec}_p\left( \mathcal{A}(x^1) \right) = 0$, and the proof is completed. 
\end{proof}

\medskip
\noindent Using the argument in~\cite[Section 3.1]{S11a}, we may assume without loss that $A$ has full column rank; i.e., ${\sf vec}_p(\mathcal{A}(\cdot))$ defines a norm on $\R^{n_1}$.  We shall denote this norm by $\|\cdot\|_\mathcal{A}$ in the sequel.

To proceed, consider the unit ball of the norm $\|\cdot\|_\mathcal{A}$ and its polar, which are given by
$$ B_{\mathcal{A}} = \left\{ x \in \R^{n_{1}}: \|x\|_{\mathcal{A}} \leq 1 \right\} $$
and
$$ B_{\mathcal{A}}^{\circ} = \left\{ y\in \R^{n_{1}}: x^Ty \le 1 \mbox{ for all } x \in B_{\mathcal{A}} \right\}, $$
respectively.  Note that both $B_\mathcal{A}$ and $B_\mathcal{A}^\circ$ are centrally symmetric and convex.  Now, using the dual characterization of norms and H\"{o}lder's inequality, we can write Problem (\ref{eq:max-norm-cubic}) as
\begin{equation} \label{eq:norm-diam}
   \max_{\|x\|_p \le 1} \|x\|_\mathcal{A} = \max_{\|x\|_p \le 1} \max_{y \in B_\mathcal{A}^\circ} x^Ty = \max_{y \in B_\mathcal{A}^\circ} \|y\|_q = \frac{1}{2} \mbox{diam}_q(B_\mathcal{A}^\circ),
\end{equation}
where $q=p/(p-1)$ is the conjugate of $p$ and $\mbox{diam}_q(B_\mathcal{A}^\circ)$ is the $L_q$--diameter of $B_\mathcal{A}^\circ$.  In particular, our original problem of approximating $v_{\sf ML}(\mathcal{A},3)$ (see (\ref{eq:trilinear})) is reduced to that of approximating $\mbox{diam}_q(B_\mathcal{A}^\circ)$, which is well studied in the literature.  In the following, we shall present two algorithms for approximating $\mbox{diam}_q(B_\mathcal{A}^\circ)$.  The first is deterministic and implements an idea of Brieden et al.~\cite{BGK+01}.  The second is based on a probabilistic argument of Khot and Naor~\cite{KN08}.  Although the latter is randomized, it is much simpler to implement and achieves a better approximation ratio than the former.

\subsubsection{Approximating the $L_q$--Diameter of $B_\mathcal{A}^\circ$ when $q\in[1,2)$}
\noindent{\bf Deterministic Approximation of $\mbox{diam}_q(B_\mathcal{A}^\circ)$.} The key observation underlying the deterministic approximation algorithm is that the diameter of a convex body with respect to a \emph{polytopal norm} can be computed to arbitrary accuracy in deterministic polynomial time under certain conditions~\cite{BGK+01}.  Thus, in order to approximate the $L_{q}$--diameter of $B_\mathcal{A}^\circ$, it suffices to first construct a centrally symmetric polytope $P$ that approximates the unit $L_{q}$--ball, and then compute the diameter of $B_\mathcal{A}^\circ$ with respect to the polytopal norm induced by $P$.  Before we describe the algorithm in more detail, let us recall some definitions from the algorithmic theory of convex bodies (see~\cite{GLS93} for further details).  For $p>2$, let $B_p^n(r) = \{x \in \R^n: \|x\|_p \le r\}$ denote the $n$--dimensional $L_p$--ball centered at the origin with radius $r>0$.  Let $K$ be a centrally symmetric convex body in $\R^n$. For any $\epsilon \geq 0$, the \emph{outer parallel body} and \emph{inner parallel body} of $K$ are given by
$$ K(\epsilon) = K+B_{2}^{n}(\epsilon) \quad \mbox{and} \quad K(-\epsilon)=\{x\in \mathbb{R}^{n}: x+B_{2}^{n}(\epsilon)\subset K\},$$
respectively. We say that $K$ is \emph{well--bounded} if there exist rational numbers $0<r\leq R < \infty $ such that $B_{2}^{n}(r)\subset K \subset B_{2}^{n}(R)$.  The \emph{weak membership problem} associated with $K$ is defined as follows:

\medskip
\noindent{\sc Weak Membership Problem}. Given a vector $y\in\Q^n$ and a rational number $\epsilon>0$, either (i) assert that $y\in K(\epsilon)$, or (ii) assert that $y\not\in K(-\epsilon)$.

\medskip
\noindent A {\it weak membership oracle} for $K$ is a black box that solves the weak membership problem associated with $K$.  

The starting point of our algorithm for approximating $\mbox{diam}_q(B_\mathcal{A}^\circ)$ is the following result of Brieden et al.~\cite{BGK+01}:
\begin{thm} \label{thm:lq-diam-approx}
   Given an integer $n\ge1$ and a rational number $q \in (1,2]$, one can construct in deterministic polynomial time a centrally symmetric polytope $P$ in $\R^n$ such that (i) $B_q^n(1) \subset P \subset B_q^n\left( O\left( n^{1/2}/(\log n)^{1/p} \right) \right)$, where $p=q/(q-1)$ is the conjugate of $q$, and (ii) for any well--bounded centrally symmetric convex body $K$ in $\R^n$, one has
$$ \Omega\left( \frac{(\log n)^{1/p}}{n^{1/2}} \right)\cdot\mbox{diam}_q(K) \le \mbox{diam}_P(K) \le \mbox{diam}_q(K), $$
where $\mbox{diam}_P(K)$ is the diameter of $K$ with respect to the polytopal norm $\|\cdot\|_P$ induced by $P$ (i.e., for any $x\in\R^n$, one has $\|x\|_P=\min\{\lambda\ge0:x\in\lambda P\}$, and $P$ is the unit ball of the induced norm).  Moreover, if $K$ is equipped with a weak membership oracle, then for any given rational number $\epsilon>0$, the quantity $\mbox{diam}_P(K)$ can be computed to an accuracy of $\epsilon$ in deterministic oracle--polynomial time\footnote{An algorithm has oracle--polynomial time complexity if its runtime is polynomial in both the input size and the number of calls to the oracle~\cite{GLS93}.}, and a vector $x\in K(\epsilon)$ is delivered with $\|x\|_P\ge(1/2) \cdot \mbox{diam}_P(K)-\epsilon$.
\end{thm}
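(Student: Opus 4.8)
Theorem~\ref{thm:lq-diam-approx} is quoted from \cite{BGK+01}, so the plan is to outline the structure of that argument rather than to reprove it in full. The crux is part~(i): constructing, in deterministic polynomial time, a centrally symmetric polytope $P$ with $\mbox{poly}(n)$ facets squeezed between $B_{q}^{n}(1)$ and $B_{q}^{n}(\rho)$ for $\rho=O\!\left(n^{1/2}/(\log n)^{1/p}\right)$. I would pass to the polar picture: writing $P^{\circ}$ for the polar of $P$, the goal is equivalent to producing a polytope $P^{\circ}=\mbox{conv}\{\pm v_{1},\ldots,\pm v_{m}\}$ with $m=\mbox{poly}(n)$ and $\|v_{j}\|_{p}\le 1$ for all $j$ such that $\tfrac{1}{\rho} B_{p}^{n}(1)\subseteq P^{\circ}$; by duality this says $\max_{j\le m}\langle v_{j},u\rangle\ge 1/\rho$ for every $u$ with $\|u\|_{q}\le 1$. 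First I would run the natural probabilistic construction, drawing $v_{1},\ldots,v_{m}$ independently and uniformly from the $L_{p}$-sphere. Then $\langle v_{j},u\rangle$ behaves like $n^{-1/p}$ times a mean-zero, variance-$\Theta(\|u\|_{2}^{2})$ random variable with sub-Gaussian-type upper and lower tails, so a Khintchine/Paley--Zygmund estimate gives $\max_{j\le m}\langle v_{j},u\rangle=\Omega\!\left(n^{-1/p}\|u\|_{2}\sqrt{\log m}\right)$ except with probability $\exp(-\Omega(n))$; since $\|u\|_{2}\ge n^{1/2-1/q}\|u\|_{q}=n^{1/p-1/2}\|u\|_{q}$ (as $1/p+1/q=1$), this is $\Omega\!\left(\sqrt{(\log m)/n}\,\|u\|_{q}\right)$. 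A union bound over a $\delta$-net of $B_{q}^{n}$, of cardinality $(C/\delta)^{n}$, then shows that one fixed choice of the $v_{j}$'s works at all net points simultaneously, and I would pass from the net to all of $B_{q}^{n}$ by convexity, losing only a constant in $\rho$. Finally I would derandomize the choice of the $v_{j}$'s (and rationalize their coordinates) to make the construction deterministic and polynomial time; this last step is the technically hard point, is where \cite{BGK+01} does the real work, and is what accounts for the stated form of the bound on $\rho$.

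Given part~(i), part~(ii) is immediate. Since $K$ is centrally symmetric, its diameter in any norm equals twice the corresponding norm-radius $\sup_{x\in K}\|x\|$, so it is monotone in the norm. The inclusion $B_{q}^{n}(1)\subseteq P$ gives $\|\cdot\|_{P}\le\|\cdot\|_{q}$ pointwise, hence $\mbox{diam}_{P}(K)\le\mbox{diam}_{q}(K)$; the inclusion $P\subseteq B_{q}^{n}(\rho)$ gives $\|\cdot\|_{q}\le\rho\|\cdot\|_{P}$, hence $\mbox{diam}_{q}(K)\le\rho\,\mbox{diam}_{P}(K)$, i.e., $\mbox{diam}_{P}(K)\ge\rho^{-1}\mbox{diam}_{q}(K)=\Omega\!\left((\log n)^{1/p}/n^{1/2}\right)\mbox{diam}_{q}(K)$.

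For the ``moreover'' part, I would write $P=\{x:|\langle w_{j},x\rangle|\le 1,\ j=1,\ldots,m\}$, the $\pm w_{j}$ being the vertices of $P^{\circ}$ from part~(i) and $m=\mbox{poly}(n)$. Central symmetry of $K$ gives $\mbox{diam}_{P}(K)=2\sup_{x\in K}\|x\|_{P}=2\max_{j\le m}h_{K}(w_{j})$, where $h_{K}(w)=\sup_{x\in K}\langle w,x\rangle$ is the support function. Given a weak membership oracle for the well-bounded body $K$, the Gr\"{o}tschel--Lov\'{a}sz--Schrijver machinery~\cite{GLS93} turns it into a weak optimization oracle, so for each fixed $j$ one computes $h_{K}(w_{j})$ to accuracy $\epsilon$ in oracle-polynomial time together with a point $x_{j}\in K(\epsilon)$ with $\langle w_{j},x_{j}\rangle\ge h_{K}(w_{j})-\epsilon$. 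Doing this over all $j=1,\ldots,m$ and reporting the largest computed value estimates $\tfrac{1}{2}\mbox{diam}_{P}(K)$ to accuracy $O(\epsilon)$ in oracle-polynomial time; if $\hat{\jmath}$ is the selected index, then $x:=x_{\hat{\jmath}}\in K(\epsilon)$ satisfies $\|x\|_{P}\ge\langle w_{\hat{\jmath}},x\rangle\ge h_{K}(w_{\hat{\jmath}})-\epsilon\ge\tfrac{1}{2}\mbox{diam}_{P}(K)-O(\epsilon)$, which is the claimed delivery guarantee after rescaling $\epsilon$.

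The main obstacle is therefore part~(i): parts~(ii) and the ``moreover'' part are routine once the sandwiching polytope is available, whereas producing it \emph{deterministically} in polynomial time --- rather than by the easy randomized construction --- requires the specialized derandomization of \cite{BGK+01}.
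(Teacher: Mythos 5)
The paper does not actually prove Theorem~\ref{thm:lq-diam-approx}: it is imported verbatim from Brieden et al.~\cite{BGK+01} and used as a black box, so there is no internal argument to compare your proposal against. Within that framing, your treatment is sound where it is concrete: the deduction of part~(ii) from part~(i) is exactly the standard monotonicity argument (the inclusion $B_q^n(1) \subset P$ gives $\|\cdot\|_P \le \|\cdot\|_q$, and $P \subset B_q^n(\rho)$ gives $\|\cdot\|_q \le \rho\|\cdot\|_P$, whence the two-sided bound on $\mbox{diam}_P(K)$), and your reduction of the ``moreover'' clause to polynomially many support-function evaluations $h_K(w_j)$ over the facet normals of $P$, each handled by the weak-membership-to-weak-optimization machinery of~\cite{GLS93}, is the right mechanism and is consistent with $P$ being presented through the polynomially many vertices $\pm v_j$ of $P^\circ$. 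Where the proposal stops short is precisely part~(i), which is the entire content of the theorem: the random-vectors-plus-net argument you sketch yields only a \emph{randomized} construction (and, as you note, with the stronger $(\log n)^{1/2}$ factor rather than $(\log n)^{1/p}$), while the assertion to be proved is the existence of a \emph{deterministic} polynomial-time construction, whose derandomization you delegate to~\cite{BGK+01} without reproducing it. Since the paper delegates the whole theorem to the same citation, this is not a defect relative to the paper, but you should present your write-up as an annotated citation plus verification of the routine consequences, not as a proof of the theorem itself.
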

Armed with Theorem \ref{thm:lq-diam-approx}, we see that in order to design a deterministic polynomial--time algorithm for approximating $\mbox{diam}_q(B_\mathcal{A}^\circ)$, it remains to show that $B_\mathcal{A}^\circ$ is well--bounded, and that there is a deterministic polynomial--time algorithm for solving the weak membership problem associated with $B_\mathcal{A}^\circ$.  This is done in the following proposition:
\begin{prop} \label{prop:ellip-prop} 
Let $\mathcal{A}=(a_{ijk}) \in \Q^{n_1\times n_2\times n_3}$ be an arbitrary non--zero order--$3$ tensor, and let $A$ be the $(n_2\times n_3)\times n_1$ matrix given by (\ref{eq:cubic-matrix}).  Suppose that $A$ has full column rank.  Then, the following hold for the centrally symmetric convex body $B_\mathcal{A}^\circ$:
\begin{enumerate}
   \item[\subpb] $B_\mathcal{A}^\circ$ is well--bounded.  Specifically, there exist rational numbers $0 < r\le R<\infty$, whose encoding lengths are polynomially bounded by the input size of Problem (\ref{eq:trilinear}), such that $B_2^{n_1}(r) \subset B_{\mathcal{A}}^\circ\subset B_2^{n_1}(R)$.

   \item[\subpb] The weak membership problem associated with $B_\mathcal{A}^\circ$ can be solved in deterministic polynomial time.
\end{enumerate}
\resetspb
\end{prop}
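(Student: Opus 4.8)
The plan is to control the norm $\|\cdot\|_\mathcal{A}={\sf vec}_p(\mathcal{A}(\cdot))$ by the Euclidean norm, deduce part (a) from that comparison, and then bootstrap (a) together with standard convex‑programming machinery to obtain part (b).

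For the norm comparison I would start from Proposition~\ref{prop:grothendieck}, which gives $\|\mathcal{A}(x)\|_{p\limto q}\le\|x\|_\mathcal{A}\le K_G\,\|\mathcal{A}(x)\|_{p\limto q}$ for every $x\in\R^{n_1}$. The key point is that $\mathcal{A}(x)$ is merely a reshaping of $Ax$: its $(j,k)$‑entry is $\sum_i a_{ijk}x_i=(Ax)_{(j,k)}$ by (\ref{eq:cubic-matrix}), so $\|\mathcal{A}(x)\|_F=\|Ax\|_2$. Combining this with the elementary comparisons $\|M\|_F/\sqrt{n_3}\le\|M\|_{p\limto q}\le\sqrt{n_2n_3}\,\|M\|_F$, valid for any $n_2\times n_3$ matrix $M$ (they follow from $\|w\|_q\ge\|w\|_2\ge\|w\|_p$ for $q\le2\le p$ plus a one‑line averaging over the columns), and with the fact that $A$ has full column rank (so $\sigma_{\min}(A)>0$), I obtain
$$\frac{\sigma_{\min}(A)}{\sqrt{n_3}}\,\|x\|_2\ \le\ \|x\|_\mathcal{A}\ \le\ K_G\sqrt{n_2n_3}\,\sigma_{\max}(A)\,\|x\|_2\qquad\text{for all }x\in\R^{n_1}.$$
Taking polars turns this into $B_2^{n_1}\!\big(\sigma_{\min}(A)/\sqrt{n_3}\big)\subset B_\mathcal{A}^\circ\subset B_2^{n_1}\!\big(K_G\sqrt{n_2n_3}\,\sigma_{\max}(A)\big)$ (and likewise for $B_\mathcal{A}$), so $B_\mathcal{A}^\circ$ is well‑bounded. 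To meet the stronger requirement in (a) --- rational bounds of polynomially bounded encoding length --- I would replace $K_G$ by $27$ (justified in~\cite{AN06}; see the discussion after Proposition~\ref{prop:grothendieck}), $\sigma_{\max}(A)$ by $1+\|A\|_F^2$, and $\sigma_{\min}(A)^2=\lambda_{\min}(A^TA)$ by the lower bound $\det(A^TA)/\mathrm{tr}(A^TA)^{\,n_1-1}$; since $A$ is rational with full column rank, $\det(A^TA)$ is a positive rational whose reciprocal has polynomial encoding length, so all the quantities involved are rationals of polynomial size. This establishes (a).

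For (b) I would first invoke the polarity machinery of Gr\"otschel--Lov\'asz--Schrijver~\cite{GLS93}: for a well‑bounded centrally symmetric convex body, a weak membership oracle and a weak membership oracle for its polar are interconvertible in oracle‑polynomial time. Since (a) shows that both $B_\mathcal{A}$ and $B_\mathcal{A}^\circ$ are well‑bounded, it then suffices to solve the weak membership problem for $B_\mathcal{A}$, i.e., to decide, up to an $\epsilon$‑error, whether $\|x\|_\mathcal{A}\le1$. But $\|x\|_\mathcal{A}={\sf vec}_p(\mathcal{A}(x))$ is by definition the optimal value of the convex program (\ref{eq:pqnorm-cvx}) with $B=\mathcal{A}(x)$, and since $p$ is rational this program admits a polynomial‑time weak separation oracle and is well‑bounded, so its value can be computed to any prescribed accuracy in deterministic polynomial time by the ellipsoid method (exactly as remarked after (\ref{eq:pqnorm-cvx})). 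Feeding this approximate evaluation, together with the two‑sided bound above (which makes $\|\cdot\|_\mathcal{A}$ Lipschitz in $\|\cdot\|_2$), into the standard weak‑membership bookkeeping then answers any query in deterministic polynomial time. I expect the only genuinely fiddly step to be the constant‑chasing in the proof of (a) --- in particular, producing the explicit polynomial‑size rational lower bound on $\sigma_{\min}(A)$; everything else is a routine assembly of Proposition~\ref{prop:grothendieck}, basic $\ell_p$--$\ell_2$ inequalities, and the polarity and ellipsoid tools of~\cite{GLS93}. The endpoint $p=\infty$ (so $q=1$) is handled identically, with ${\sf vec}_p$ taken to be the second branch of~(\ref{eq:pqnorm-cvx}).
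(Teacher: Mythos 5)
Your proposal is correct and takes essentially the same route as the paper: both proofs sandwich $B_\mathcal{A}$ between two Euclidean balls whose radii are controlled using the full column rank of $A$ (the paper imports the explicit bounds from the argument of~\cite{S11a}, while you rederive them via Proposition~\ref{prop:grothendieck} together with Frobenius/singular--value comparisons), pass to $B_\mathcal{A}^\circ$ by polarity, and obtain (b) by combining the weak--membership/polarity machinery of~\cite{GLS93} with the fact that $\|x\|_\mathcal{A}={\sf vec}_p(\mathcal{A}(x))$ is computable to arbitrary accuracy through the convex program (\ref{eq:pqnorm-cvx}). The only loose end you flag---replacing the square--root expressions by rational radii of polynomially bounded encoding length---is the same routine rounding the paper handles with ceilings, so it does not constitute a gap.
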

\begin{proof}
\begin{enumerate}
   \item[\subpb] By polarity, we have $B_2^{n_1}(r) \subset B_{\mathcal{A}}^\circ \subset B_2^{n_1}(R)$ if and only if $B_2^{n_1}(1/R) \subset B_{\mathcal{A}} \subset B_2^{n_1}(1/r)$.  Thus, it suffices to show that $B_\mathcal{A}$ is well--bounded.  Now, using the argument in~\cite[Proposition 2]{S11a} and the assumption that $A$ has full column rank, one can show that $B_2^{n_1}(r') \subset B_\mathcal{A} \subset B_2^{n_1}(R')$, where
$$ r' = \frac{1}{\lceil \sqrt{n_1} \rceil \cdot m}, \quad m = \max_{1 \le i \le n_1} \sum_{j=1}^{n_2}\sum_{k=1}^{n_3} |a_{ijk}| $$
and
$$ R' = \left\lceil \sqrt{\frac{n_2n_3}{\lambda_{min}(A^TA)}} \right\rceil $$
are rational numbers and satisfy $0 < r' \le R'<\infty$.  Moreover, the encoding lengths of $r'$ and $R'$ can be polynomially bounded by the input size of Problem (\ref{eq:trilinear}); see~\cite{GLS93}.  This establishes (a).

   \item[\subpb] By the well--boundedness of $B_\mathcal{A}$ and the results in~\cite[Chapter 4]{GLS93}, it suffices to show that the weak membership problem associated with $B_\mathcal{A}$ can be solved in deterministic polynomial time.  However, this follows directly from the argument in~\cite[Proposition 3]{S11a} and the observation that $\|x\|_\mathcal{A}$ can be computed to arbitrary accuracy in deterministic polynomial time (see (\ref{eq:pqnorm-cvx}) and the remarks following it).
\end{enumerate}
\resetspb
\vspace{-0.5cm}
\end{proof}

\medskip
Using (\ref{eq:norm-diam}), Proposition \ref{prop:ellip-prop} and Theorem \ref{thm:lq-diam-approx}, we conclude that the optimal value of Problem (\ref{eq:max-norm-cubic}) can be approximated to within a factor of $\Omega\left( (\log n_1)^{1/p} \big/ n_1^{1/2} \right)$ in deterministic polynomial time.  Thus, by (\ref{maxmax}) and Proposition \ref{prop:grothendieck}, the optimal value of Problem (\ref{eq:trilinear}) can also be approximated to within a factor of $\Omega\left( (\log n_1)^{1/p} \big/ n_1^{1/2} \right)$ in deterministic polynomial time.  To extract a feasible solution to Problem (\ref{eq:trilinear}) with the stated approximation guarantee, we just need to unwind our sequence of reductions.  For simplicity, let us assume that all computations can be done exactly.  Then, by Proposition \ref{prop:ellip-prop} and Theorem \ref{thm:lq-diam-approx}, we can find a centrally symmetric polytope $P$ and a vector $\bar{y} \in B_\mathcal{A}^\circ$ such that
\begin{equation} \label{eq:qq}
   \|\bar{y}\|_q \ge \|\bar{y}\|_P = \frac{1}{2}\mbox{diam}_P(B_\mathcal{A}^\circ) \ge \Omega\left( \frac{(\log n_1)^{1/p}}{n_1^{1/2}} \right) \cdot \mbox{diam}_q(B_\mathcal{A}^\circ).
\end{equation}
Now, define the vector $\bar{x}^1 \in \R^{n_1}$ by
$$ \bar{x}^1_i = \frac{\mbox{sgn}(\bar{y}_i) \cdot |\bar{y}_i|^{q-1}}{\|\bar{y}\|_q^{q-1}} \quad\mbox{for } i=1,\ldots,n_1. $$
It is easy to verify that $\|\bar{x}^1\|_p = 1$ and
\begin{equation} \label{eq:cvx-q}
   {\sf vec}_p\left( \mathcal{A}(\bar{x}^1) \right) = \|\bar{x}^1\|_\mathcal{A} = (\bar{x}^1)^T\bar{y} = \|\bar{y}\|_q. 
\end{equation}
In particular, by applying the deterministic version of Algorithm \ref{alg:pq-norm} to the $n_2\times n_3$ matrix $\mathcal{A}(\bar{x}^1)$, we can extract two vectors $\bar{x}^2 \in \R^{n_2}$ and $\bar{x}^3 \in \R^{n_3}$ such that $\|\bar{x}^2\|_p = \|\bar{x}^3\|_p=1$ and
\begin{equation} \label{eq:tri-vec}
   \sum_{i=1}^{n_1}\sum_{j=1}^{n_2}\sum_{k=1}^{n_3} a_{ijk}\bar{x}_i^1\bar{x}_j^2\bar{x}_k^3 \ge \frac{1}{27} {\sf vec}_p\left( \mathcal{A}(\bar{x}^1) \right). 
\end{equation}
Finally, since (\ref{maxmax}), (\ref{eq:norm-diam}) and Proposition \ref{prop:grothendieck} together imply
$$ \frac{1}{2}\mbox{diam}_q(B_\mathcal{A}^\circ) = \max_{\|x\|_p \le 1} \|x\|_\mathcal{A} \ge \max_{\|x^1\|_p \le 1} \|\mathcal{A}(x^1)\|_{p \limto q} = v_{\sf ML}(\mathcal{A},3), $$
we conclude from (\ref{eq:qq})--(\ref{eq:tri-vec}) that $(\bar{x}^1,\bar{x}^2,\bar{x}^3) \in \R^{n_1} \times \R^{n_2} \times \R^{n_3}$ is an $\Omega\left( (\log n_1)^{1/p} \big/ n_1^{1/2} \right)$--approximate solution to Problem (\ref{eq:trilinear}). 

Recall that the above conclusion is obtained under the assumption that all computations are exact.  However, it can be shown via a similar but more tedious calculation that the same conclusion holds when the computations are inexact; cf.~\cite{S11a}.  Thus, we have proven the following theorem:
\begin{thm} \label{thm:trilinear}
For any given $p\in(2,\infty]$, there is a deterministic polynomial--time approximation algorithm for Problem (\ref{eq:trilinear}) with approximation ratio $\Omega\left( (\log n_1)^{1/p} \big/ n_1^{1/2} \right)$.
\end{thm}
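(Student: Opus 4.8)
The plan is to express $v_{\sf ML}(\mathcal{A},3)$ as (half) the $L_q$--diameter of an explicitly constructed convex body, and then invoke the deterministic diameter--approximation machinery of Brieden et al. First I would start from the reformulation (\ref{maxmax}), which gives $v_{\sf ML}(\mathcal{A},3)=\max_{\|x^1\|_p\le1}\|\mathcal{A}(x^1)\|_{p\limto q}$. Since evaluating the $p\limto q$ norm is itself NP--hard, I would replace it throughout by the convex surrogate ${\sf vec}_p(\cdot)$ of (\ref{eq:pqnorm-cvx}): by Proposition~\ref{prop:grothendieck} the two quantities agree up to the factor $K_G$, and running the deterministic version of Algorithm~\ref{alg:pq-norm} (the deterministic rounding of~\cite{AN06}) certifies $K_G\le27$ while, as a by--product, producing feasible witnesses $\bar{x}^2,\bar{x}^3$. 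Consequently any $\alpha$--approximation to Problem (\ref{eq:max-norm-cubic}) yields an $(\alpha/27)$--approximation to $v_{\sf ML}(\mathcal{A},3)$, so it suffices to approximate $\max_{\|x^1\|_p\le1}{\sf vec}_p(\mathcal{A}(x^1))$.

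Next I would appeal to Proposition~\ref{prop:norm}: reducing first (without loss, as in~\cite[Section~3.1]{S11a}) to the case where the matrix $A$ of (\ref{eq:cubic-matrix}) has full column rank, the map $x^1\mapsto{\sf vec}_p(\mathcal{A}(x^1))$ is a genuine norm on $\R^{n_1}$, which I denote $\|\cdot\|_{\mathcal{A}}$. Then the dual characterization of norms together with H\"older's inequality yields, as in (\ref{eq:norm-diam}), that $\max_{\|x\|_p\le1}\|x\|_{\mathcal{A}}=\max_{y\in B_{\mathcal{A}}^\circ}\|y\|_q=\tfrac12\,\mbox{diam}_q(B_{\mathcal{A}}^\circ)$. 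Thus the problem is now precisely that of approximating the $L_q$--diameter of the polar body $B_{\mathcal{A}}^\circ$, with $q=p/(p-1)\in[1,2)$.

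To approximate that diameter I would apply Theorem~\ref{thm:lq-diam-approx}: construct in deterministic polynomial time a centrally symmetric polytope $P$ with $B_q^{n_1}(1)\subset P\subset B_q^{n_1}\big(O(n_1^{1/2}/(\log n_1)^{1/p})\big)$ and compute $\mbox{diam}_P(B_{\mathcal{A}}^\circ)$, which then approximates $\mbox{diam}_q(B_{\mathcal{A}}^\circ)$ to within a factor $\Omega\big((\log n_1)^{1/p}/n_1^{1/2}\big)$. The two hypotheses required there --- that $B_{\mathcal{A}}^\circ$ is well--bounded with polynomially encodable radii, and that it is equipped with a deterministic polynomial--time weak membership oracle --- are exactly what Proposition~\ref{prop:ellip-prop} supplies, the oracle ultimately because $\|\cdot\|_{\mathcal{A}}={\sf vec}_p(\mathcal{A}(\cdot))$ is a convex program solvable to arbitrary accuracy by the ellipsoid method. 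Theorem~\ref{thm:lq-diam-approx} also returns a witness $\bar{y}\in B_{\mathcal{A}}^\circ$ with $\|\bar{y}\|_q$ essentially meeting this bound, cf.~(\ref{eq:qq}).

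Finally I would unwind the reductions to output an actual feasible triple: set $\bar{x}^1_i=\mbox{sgn}(\bar{y}_i)|\bar{y}_i|^{q-1}/\|\bar{y}\|_q^{q-1}$, so that $\|\bar{x}^1\|_p=1$ and ${\sf vec}_p(\mathcal{A}(\bar{x}^1))=\|\bar{x}^1\|_{\mathcal{A}}=(\bar{x}^1)^T\bar{y}=\|\bar{y}\|_q$ (the equality case of H\"older), then run the deterministic Algorithm~\ref{alg:pq-norm} on the $n_2\times n_3$ matrix $\mathcal{A}(\bar{x}^1)$ to get $\bar{x}^2,\bar{x}^3$ with unit $L_p$--norm and objective value at least $\tfrac1{27}{\sf vec}_p(\mathcal{A}(\bar{x}^1))$. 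Chaining these estimates with $\tfrac12\mbox{diam}_q(B_{\mathcal{A}}^\circ)=\max_{\|x\|_p\le1}\|x\|_{\mathcal{A}}\ge v_{\sf ML}(\mathcal{A},3)$ shows $(\bar{x}^1,\bar{x}^2,\bar{x}^3)$ is an $\Omega\big((\log n_1)^{1/p}/n_1^{1/2}\big)$--approximate solution. The one genuinely delicate point is that every step above --- the convex relaxation solve, the membership oracle, and the diameter routine --- is carried out only to accuracy $\epsilon$, so the real work is a careful (if routine) error--propagation analysis verifying that the approximation ratio is preserved; I would handle this exactly as in~\cite{S11a}, taking $\epsilon$ inverse--polynomially small relative to the quantities $r',R',\lambda_{\min}(A^TA)$ appearing in Proposition~\ref{prop:ellip-prop}.
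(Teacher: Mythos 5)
Your proposal is correct and follows essentially the same route as the paper: the reformulation (\ref{maxmax}), the ${\sf vec}_p$ surrogate with the deterministic Grothendieck rounding of~\cite{AN06}, the norm/diameter identity (\ref{eq:norm-diam}) via Proposition~\ref{prop:norm}, the Brieden et al.\ polytopal diameter algorithm enabled by Proposition~\ref{prop:ellip-prop}, and the same unwinding of $\bar{y}$ into $(\bar{x}^1,\bar{x}^2,\bar{x}^3)$ with the error propagation deferred to the analysis in~\cite{S11a}. No gaps beyond those the paper itself leaves to that reference.
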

The following corollary is a direct consequence of Theorems \ref{hpml} and \ref{thm:trilinear}:
\begin{coro}
For $d=3$ and any given $p\in(2,\infty]$, there is a deterministic polynomial--time approximation algorithm for Problem $({\sf HP})$ with approximation ratio $\Omega\left( (\log n)^{1/p} / n^{1/2} \right)$.
\end{coro}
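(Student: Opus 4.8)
The plan is to derive the corollary by chaining together the two results just established: Theorem \ref{hpml}, which shows that Problem $({\sf HP})$ and its multilinear relaxation $({\sf MR})$ are equivalent up to a constant factor, and Theorem \ref{thm:trilinear}, which gives a deterministic polynomial--time approximation algorithm for $L_p$--ball constrained trilinear maximization. Since the degree $d=3$ is fixed and \emph{odd}, the constant $d!\,d^{-d}=6/27=2/9$ is absolute and Theorem \ref{hpml} will yield an honest (not merely relative) approximation guarantee; hence the only thing to verify is that the hypothesis of Theorem \ref{hpml}---a deterministic polynomial--time $\alpha$--approximation algorithm for Problem $({\sf MR})$---holds with $\alpha=\Omega\left((\log n)^{1/p}/n^{1/2}\right)$.

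First I would observe that Problem $({\sf MR})$ with $d=3$ is simply a particular instance of Problem (\ref{eq:trilinear}): the objective $F_\mathcal{A}(x^1,x^2,x^3)$ is the trilinear form of the super--symmetric (in particular cubical) order--$3$ tensor $\mathcal{A}\in\R^{n^3}$, and the constraints $\|x^i\|_p\le1$, $i=1,2,3$, are exactly those of (\ref{eq:trilinear}) with $n_1=n_2=n_3=n$. The tensor $\mathcal{A}$ is non--zero by the standing assumption on Problem $({\sf HP})$. As remarked after Proposition \ref{prop:norm}, the full--column--rank assumption on the unfolding matrix $A$ entails no loss of generality (via the argument of \cite[Section 3.1]{S11a}), so Theorem \ref{thm:trilinear} applies and produces a deterministic polynomial--time algorithm $\mathscr{A}_{\sf MR}$ that returns a feasible solution to Problem $({\sf MR})$ of objective value at least $\alpha\,v^*$ with $\alpha=\Omega\left((\log n_1)^{1/p}/n_1^{1/2}\right)=\Omega\left((\log n)^{1/p}/n^{1/2}\right)$, since $n_1=n$ here.

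It then remains to feed $\mathscr{A}_{\sf MR}$ into Theorem \ref{hpml} with $d=3$. Because all ingredients---$\mathscr{A}_{\sf MR}$ and the reduction underlying Theorem \ref{hpml}---are deterministic, so is the resulting algorithm $\mathscr{A}_{\sf HP}$, and the odd--$d$ branch of Theorem \ref{hpml} gives a vector $\hat x$ with $\|\hat x\|_p\le1$ and
$$ f_\mathcal{A}(\hat x) \ge \alpha\cdot d!\cdot d^{-d}\cdot\bar v = \frac{2}{9}\,\alpha\,\bar v = \Omega\left(\frac{(\log n)^{1/p}}{n^{1/2}}\right)\cdot\bar v, $$
which is precisely an approximation ratio of $\Omega\left((\log n)^{1/p}/n^{1/2}\right)$ for Problem $({\sf HP})$ (using $\bar v\ge0$, valid since $x=\bz$ is feasible with $f_\mathcal{A}(\bz)=0$). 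I do not expect any genuine obstacle in this argument; the only points requiring a moment's care are (i) recognizing that the super--symmetric/cubical tensor of Problem $({\sf MR})$ is a legitimate input for the more general Theorem \ref{thm:trilinear}, (ii) tracking the single dimension $n_1=n$ so the bound comes out in terms of $n$, and (iii) invoking the odd--$d$ case of Theorem \ref{hpml} so that $d!\,d^{-d}=2/9$ is an absolute constant absorbed into the $\Omega(\cdot)$.
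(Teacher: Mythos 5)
Your proposal is correct and matches the paper's argument: the corollary is stated there as a direct consequence of Theorem \ref{hpml} (with $d=3$ odd, so the factor $d!\,d^{-d}=2/9$ is an absolute constant) and Theorem \ref{thm:trilinear} applied to the $d=3$ instance of Problem $({\sf MR})$ with $n_1=n_2=n_3=n$. Your additional remarks on non--degeneracy of $\mathcal{A}$, the full--column--rank assumption, and determinism of the composed algorithm are exactly the routine checks the paper leaves implicit.
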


\medskip
\noindent {\bf Randomized Approximation of $\mbox{diam}_q(B_\mathcal{A}^\circ)$.} In this section, we consider an alternative approach to approximating $\mbox{diam}_q(B_\mathcal{A}^\circ)$, namely, via randomization.  The theoretical underpinning of this approach is the following probabilistic results due to Khot and Naor~\cite{KN08}:
\begin{prop} \label{prop:prob}
The following hold:
\begin{enumerate}
\item[\subpb] Let $\zeta_1,\ldots,\zeta_n$ be i.i.d.~Bernoulli random variables and set $\zeta=(\zeta_1,\ldots,\zeta_n) \in \R^n$.  Then, there exist universal constants $\delta_0,c_0 > 0$ such that for every $w \in \R^n$,
$$ \Pr \left( w^T\zeta \ge \sqrt{\frac{\delta_0 \log n}{n}} \cdot \|w\|_1 \right) \ge \frac{c_0}{n^{\delta_0}}. $$

\item[\subpb] Suppose that $q\in(1,2)$, and let $p=q/(q-1)$.  Let $\xi_1,\ldots,\xi_n$ be i.i.d.~random variables with density $p\cdot\exp(-|t|^p)/(2\Gamma(1/p))$ and set $\xi=(\xi_1,\ldots,\xi_n) \in \R^n$.  Then, there exist universal constants $\delta_1,c_1,c_2,\bar{n}>0$ such that for all $n \ge \bar{n}$, we have 
$$ \Pr\left( \frac{w^T\xi}{\|\xi\|_p} \ge \sqrt{\frac{\delta_1 \log n}{n}} \cdot \|w\|_q \right) \ge \frac{c_1}{n^{c_2}} $$ 
for every $w \in \R^n$.
\end{enumerate}
\resetspb
\end{prop}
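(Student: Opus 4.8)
The plan is to prove both parts by the scheme of Khot and Naor~\cite{KN08}: a \emph{scaling reduction}, then a \emph{dichotomy} between a ``dominant coordinate'' regime and a ``spread-out'' regime, the latter handled by a \emph{Berry--Esseen comparison} whose Gaussian moderate-deviation tail supplies the $n^{-O(1)}$ lower bound on the probability. In both statements one lower-bounds the chance that a linear functional of a symmetric random vector exceeds a $\Theta\big(\sqrt{\log n/n}\,\big)$-fraction of its natural scale, and the whole difficulty lies in capturing the extra $\sqrt{\log n}$ factor that a plain second-moment (Paley--Zygmund) estimate would miss. Throughout, write $\bar\Phi(x)=\Pr(g\ge x)$ for $g\sim N(0,1)$ and recall $\bar\Phi\big(O(\sqrt{\log n})\big)\ge c\,n^{-O(1)}/\sqrt{\log n}$.

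For part (a): by homogeneity assume $\|w\|_1=1$, and since the $\zeta_i$ are symmetric assume $w_1\ge w_2\ge\cdots\ge w_n\ge0$. Put $t=\sqrt{\delta_0\log n/n}$ and $j^\star=\min\{j:\sum_{i\le j}w_i\ge t\}$. If $j^\star\le\lfloor\delta_0\log_2 n\rfloor$, condition on $\zeta_1=\cdots=\zeta_{j^\star}=1$, an event of probability $2^{-j^\star}\ge n^{-\delta_0}$ on which $w^T\zeta\ge t+\sum_{i>j^\star}w_i\zeta_i$; the residual sum is symmetric about $0$, hence nonnegative with probability at least $\tfrac12$, so $\Pr(w^T\zeta\ge t)\ge\tfrac12 n^{-\delta_0}$. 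If instead $j^\star>\lfloor\delta_0\log_2 n\rfloor$, then the $\lfloor\delta_0\log_2 n\rfloor$ largest coordinates already fail to reach mass $t$, so $w$ is spread out; here one peels off a prefix of at most $O(\log n)$ of the largest coordinates---conditioning each to $+1$, at a cost of a factor $n^{-O(1)}$---until the residual sum $R$ is flat enough that its largest surviving coefficient is small relative to $(\mathrm{Var}\,R)^{1/2}$, the latter being $\Omega(1/\sqrt n)$ by Cauchy--Schwarz. Then Berry--Esseen identifies $\Pr(R\ge s)$ with $\bar\Phi\big(s/(\mathrm{Var}\,R)^{1/2}\big)$ up to an error that becomes $o\big(n^{-\delta_0}\big)$ once $\delta_0$ is small, and since the threshold of interest is within $O(\sqrt{\log n})$ standard deviations, the Gaussian bound closes the case; combining the two cases (shrinking $\delta_0,c_0$ if needed) proves part (a).

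For part (b): first, $\sum_{i=1}^n|\xi_i|^p$ is a sum of i.i.d.\ random variables each with mean $\E{|\xi_1|^p}=1/p$, so a Chernoff bound gives $\Pr\big(\tfrac12(n/p)^{1/p}\le\|\xi\|_p\le 2(n/p)^{1/p}\big)\ge\tfrac12$ for $n\ge\bar n$, and it suffices to lower-bound $\Pr\big(w^T\xi\ge c\sqrt{\log n/n}\,n^{1/p}\|w\|_q\big)$. Using $1/q-1/2=1/2-1/p$ and the power-mean inequality $\|w\|_q\le n^{1/2-1/p}\|w\|_2$, this threshold is $O(\sqrt{\log n})$ times the standard deviation $\|w\|_2\big(\Gamma(3/p)/\Gamma(1/p)\big)^{1/2}$ of $w^T\xi$, so we are again in a moderate-deviation regime. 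Running the same dichotomy: if $w$ has a dominant coordinate, say $|w_1|\ge\tfrac12\|w\|_2$, the bound follows from a pointwise lower bound on the density of $\xi_1$ (no atom here, so this is cleaner than the Bernoulli case); otherwise $w$ is spread out and a Berry--Esseen comparison to a Gaussian of variance $\|w\|_2^2\,\Gamma(3/p)/\Gamma(1/p)$, together with $\bar\Phi\big(O(\sqrt{\log n})\big)\ge c\,n^{-O(1)}/\sqrt{\log n}$, gives the conclusion, with $\delta_1,c_1,c_2,\bar n$ read off from balancing the Berry--Esseen error against the Gaussian tail.

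The step I expect to be the main obstacle is controlling the Berry--Esseen error \emph{uniformly} over all admissible $w$ in the spread-out case---in particular for the borderline configurations that have a few ``moderate'' spikes sitting on a flat background, where the naive third-moment-over-variance ratio is $\Theta(\sqrt{\log n})$ rather than $o(1)$. Dealing with these is precisely why one must peel off, and condition to $+1$, a prefix of the sorted coordinates whose length is \emph{provably} $O(\log n)$ in the regime $j^\star>\lfloor\delta_0\log_2 n\rfloor$, after which the residual is ``Gaussian enough'' for the central limit theorem to apply with negligible error. This bookkeeping---the technical heart of~\cite{KN08}---is where the numerical values of $\delta_0,c_0$ (resp.~$\delta_1,c_1,c_2,\bar n$) get pinned down.
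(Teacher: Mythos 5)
First, a framing remark: the paper does not prove Proposition~\ref{prop:prob} at all --- it is quoted from Khot and Naor~\cite{KN08}, with only the explicit constants (\ref{eq:inf-norm-const})--(\ref{eq:q-norm-const}) extracted from their proofs --- so the benchmark is the argument of~\cite{KN08}, which you are trying to reconstruct blind. Your dominant-mass case in part (a) is fine, but the reconstruction has a genuine gap exactly at the step you flag as the main obstacle, and the fix you propose does not work. In the spread-out case you peel an $O(\log n)$ prefix and then invoke Berry--Esseen, asserting the error ``becomes $o(n^{-\delta_0})$ once $\delta_0$ is small''. This is false: the Berry--Esseen error is at least of order $\max_i |w_i|/\sigma$ for the residual, and in your case~2 that ratio can be $\Theta(1/\sqrt{\log n})$, which exceeds $n^{-\delta_0}$ for every fixed $\delta_0>0$. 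Concretely, normalize $\|w\|_1=1$, set $t=\sqrt{\delta_0\log n/n}$, $k_0=\lfloor \delta_0\log_2 n\rfloor$, and let $w$ have $2k_0$ coordinates equal to $t/(2k_0)$ with the remaining mass $1-t$ spread evenly over the other coordinates. Then $j^\star=2k_0>k_0$, so you are in case~2; after peeling any $O(\log n)$ prefix the residual has $\sigma=\Theta(1/\sqrt{n})$, the remaining threshold is still $\Theta(t)$, i.e.\ $\Theta(\sqrt{\delta_0\log n})$ standard deviations away (Gaussian tail $n^{-\Theta(\delta_0)}$), while $\Theta(\log n)$ surviving coefficients of size $\Theta(1/\sqrt{n\log n})$ keep the Lyapunov ratio at $\Theta(1/\sqrt{\delta_0\log n})$. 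The CLT comparison therefore returns a vacuous lower bound, and enlarging the peel budget to $C\log n$ only shifts the counterexample (take $4C\log n$ spikes of size $t/(4C\log n)$). The same objection hits the spread-out case of your part (b): ``no dominant coordinate'' does not make the third-moment ratio polynomially small, so plain Berry--Esseen cannot certify a tail of size $n^{-c}$ at $\Theta(\sqrt{\log n})$ standard deviations.

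What is missing is a mechanism that reaches the moderate-deviation scale without paying the CLT error, and this is where the actual argument differs from yours. One correct route, in the spirit of~\cite{KN08}: if $\|w\|_2\ge 2t\|w\|_1$, a fourth-moment Paley--Zygmund bound gives $\Pr\bigl(w^T\zeta\ge \|w\|_2/2\bigr)\ge 3/32$ outright, with no $\log n$ loss; if $\|w\|_2<2t\|w\|_1$, discard \emph{by symmetry} (cost a factor $1/2$, not conditioning to $+1$) all coordinates larger than $n^{-3\delta_0}\|w\|_2$. There may be up to $n^{6\delta_0}$ of them --- far more than any $O(\log n)$ budget --- but by Cauchy--Schwarz their total $\ell_1$ mass is at most $2n^{3\delta_0}t\,\|w\|_1=o(\|w\|_1)$ for $\delta_0<1/6$, so the flat remainder keeps $\ell_1$ mass $\ge \|w\|_1/2$, hence variance $\ge \|w\|_1^2/(4n)$, and its Lyapunov ratio is now $O(n^{-3\delta_0}\sqrt{\log n})$, genuinely below the Gaussian tail $\approx n^{-2\delta_0}$ being targeted; only at this point is Berry--Esseen admissible. (An alternative that avoids the CLT entirely is to split the flat part into $\Theta(\delta_0\log n)$ blocks of comparable variance and multiply Paley--Zygmund bounds across the independent blocks.) Part (b) needs the analogous repair, plus a small decoupling step for the denominator, since the events $\{w^T\xi \mbox{ large}\}$ and $\{\|\xi\|_p\le 2(n/p)^{1/p}\}$ are not independent (use $\Pr(A\cap B)\ge \Pr(A)-\Pr(B^c)$); and it is only from such a quantitative version, not from the sketch as written, that the constants in (\ref{eq:inf-norm-const}) and (\ref{eq:q-norm-const}) can be read off.
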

{\bf Remark.} An inspection of the proofs in~\cite{KN08} reveals that one can take
\begin{eqnarray}
& \displaystyle{\delta_0 = \frac{1}{48}, \quad c_0 = \frac{1}{72}}, & \label{eq:inf-norm-const} \\
\noalign{\medskip}
& \displaystyle{ \delta_1 = \frac{\E{\xi_1^2}}{160 \times 2^{2/q}} > \frac{3}{6400}, \quad c_1 = \frac{1}{144}, \quad c_2 = \frac{1}{40}, \quad \bar{n} = 41. } & \label{eq:q-norm-const}
\end{eqnarray}

Using Proposition \ref{prop:prob}, we can prove the following result:
\begin{prop} \label{prop:rand-diam-approx}
   For any given $q\in[1,2)$, there is a randomized polynomial--time algorithm that returns a vector $v\in\R^{n_1}$ with the following property:
$$ \Pr\left[ \Omega\left( \sqrt{\frac{\log n_1}{n_1}} \right) \cdot \mbox{diam}_q(B_\mathcal{A}^\circ) \le 2\|v\|_\mathcal{A} \le \mbox{diam}_q(B_\mathcal{A}^\circ) \right] \ge \frac{1}{2}. $$
\end{prop}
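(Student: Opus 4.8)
The plan is to combine the diameter–as–norm–maximization identity (\ref{eq:norm-diam}) with the concentration estimates of Proposition~\ref{prop:prob}, boosted by a ``sample many, keep the best'' device. The crucial point is that a random point on the unit $L_p$–sphere automatically meets the \emph{upper} inequality in the claim by feasibility, so only the probabilistic \emph{lower} inequality has to be generated at random, and — since $\|v\|_\mathcal{A}={\sf vec}_p(\mathcal{A}(v))$ can be evaluated to arbitrary accuracy in polynomial time — the success probability can be amplified by keeping the best of many independent trials.

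First, recall from (\ref{eq:norm-diam}) that
$$ \tfrac12\,\mbox{diam}_q(B_\mathcal{A}^\circ)\;=\;\max_{\|x\|_p\le1}\|x\|_\mathcal{A}\;=\;\max_{y\in B_\mathcal{A}^\circ}\|y\|_q. $$
Since $B_\mathcal{A}^\circ$ is compact, the last maximum is attained at some $y^\star\in B_\mathcal{A}^\circ$ with $\|y^\star\|_q=\tfrac12\,\mbox{diam}_q(B_\mathcal{A}^\circ)$; this vector is never computed by the algorithm and enters only in the analysis. Recall also the dual identity $\|v\|_\mathcal{A}=\max_{y\in B_\mathcal{A}^\circ}v^Ty$, so $\|v\|_\mathcal{A}\ge v^Ty^\star$ for every $v\in\R^{n_1}$. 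Now draw a random vector on the unit $L_p$–sphere: for $p=\infty$ (so $q=1$) let $v=\zeta$ be the random sign vector of Proposition~\ref{prop:prob}(a), noting $\|\zeta\|_\infty\le1$; for $p\in(2,\infty)$ (so $q\in(1,2)$) let $v=\xi/\|\xi\|_p$ with $\xi$ as in Proposition~\ref{prop:prob}(b), so $\|v\|_p=1$ almost surely. Applying Proposition~\ref{prop:prob} with $w=y^\star$, with probability at least $p_0:=\min\{c_0/n_1^{\delta_0},\,c_1/n_1^{c_2}\}=\Omega(n_1^{-c})$ for some absolute constant $c>0$ we get $v^Ty^\star\ge\sqrt{\delta\log n_1/n_1}\cdot\|y^\star\|_q$ with $\delta\in\{\delta_0,\delta_1\}$, whence
$$ 2\|v\|_\mathcal{A}\;\ge\;2\,v^Ty^\star\;\ge\;\Omega\!\left(\sqrt{\tfrac{\log n_1}{n_1}}\right)\cdot\mbox{diam}_q(B_\mathcal{A}^\circ). $$
On the other hand, because $\|v\|_p\le1$, the identity above gives $2\|v\|_\mathcal{A}\le2\max_{\|x\|_p\le1}\|x\|_\mathcal{A}=\mbox{diam}_q(B_\mathcal{A}^\circ)$ \emph{deterministically}. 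Thus a single draw already fulfils the required two–sided estimate with probability at least $p_0$.

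To raise the success probability to $\tfrac12$, draw $v_1,\ldots,v_T$ independently in this way with $T=\lceil p_0^{-1}\ln2\rceil$, which is polynomial in $n_1$. Each $v_t$ lies on the unit $L_p$–sphere and hence automatically satisfies $2\|v_t\|_\mathcal{A}\le\mbox{diam}_q(B_\mathcal{A}^\circ)$; evaluate $\|v_t\|_\mathcal{A}={\sf vec}_p(\mathcal{A}(v_t))$ (a convex program solvable to any prescribed accuracy in polynomial time; see (\ref{eq:pqnorm-cvx}) and the remarks after it) and output the $v_t$ maximizing this value. The probability that \emph{no} draw meets the lower bound is at most $(1-p_0)^T\le e^{-p_0T}\le\tfrac12$, so the output obeys both inequalities with probability at least $\tfrac12$. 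The finitely many dimensions $n_1<\bar n=41$ left uncovered by Proposition~\ref{prop:prob}(b) are handled trivially, since $n_1$ is then a constant: an exhaustive search over a fixed-size net of the unit $L_p$–sphere yields a constant-factor — hence $\Omega(\sqrt{\log n_1/n_1})$ — approximation.

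The one place requiring care — and which I would dispatch exactly as in the deterministic development that precedes Theorem~\ref{thm:trilinear} — is that neither the generalized–Gaussian samples $\xi$ nor the convex program defining ${\sf vec}_p(\mathcal{A}(\cdot))$ can be handled with infinite precision. Running everything with a sufficiently small, polynomially encoded accuracy parameter $\epsilon>0$, and using the well–boundedness of $B_\mathcal{A}^\circ$ from Proposition~\ref{prop:ellip-prop}(a) to control how the errors propagate, only perturbs the hidden constants. I expect this bookkeeping, rather than the probabilistic core of the argument, to be the most tedious part.
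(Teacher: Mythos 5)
Your proposal is correct and follows essentially the same route as the paper's own proof: identify the $L_q$--diameter with a norm maximum via (\ref{eq:norm-diam}), fix an (uncomputed) maximizer $\bar{y}\in B_\mathcal{A}^\circ$, apply Proposition~\ref{prop:prob} with $w=\bar{y}$ to a single random $L_p$--unit vector, note the deterministic upper bound $2\|v\|_\mathcal{A}\le\mbox{diam}_q(B_\mathcal{A}^\circ)$, and amplify to success probability $1/2$ by taking the $\|\cdot\|_\mathcal{A}$--maximizing vector among $\Theta(n_1^{c})$ independent draws. Your additional remarks on the small--$n_1$ regime and on finite--precision computation go slightly beyond what the paper records but do not change the argument.
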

\begin{proof}
Since $B_\mathcal{A}^\circ$ is compact and $x \mapsto \|x\|_q$ is continuous, there exists a $\bar{y} \in B_\mathcal{A}^\circ$ such that $\|\bar{y}\|_q = \mbox{diam}_q(B_\mathcal{A}^\circ)/2$.  We consider two cases:

\medskip
\noindent{\bf Case 1: $q=1$.}  Let $\delta_0,c_0$ be as in (\ref{eq:inf-norm-const}) and set $M=(\ln 2)n_1^{\delta_0}/c_0$.  Consider a collection $\{\zeta_j^i:i=1,\ldots,M;\, j=1,\ldots,{n_1}\}$ of i.i.d.~Bernoulli random variables.  Define
\begin{eqnarray*}
 & \displaystyle{ \zeta^i=(\zeta_1^i,\ldots,\zeta_{n_1}^i) \in \R^{n_1} \quad \mbox{for } i=1,\ldots,M, } & \\
\noalign{\medskip}
 & \displaystyle{ i^* = \arg\max_{1\le i\le M} \|\zeta^i\|_\mathcal{A}, \quad v = \zeta^{i^*}, \quad \tau = 2\|v\|_\mathcal{A}. } &
\end{eqnarray*}
We claim that $v$ has the desired property.  Indeed, it is clear from (\ref{eq:norm-diam}) that $\tau \le \mbox{diam}_1(B_\mathcal{A}^\circ)$.  Moreover, upon recalling that $\|\zeta^i\|_\mathcal{A} = \max_{y \in B_\mathcal{A}^\circ} y^T\zeta^i$ and using Proposition \ref{prop:prob}(a), we have
\begin{eqnarray*}
\Pr \left ( \tau \ge \sqrt{\frac{\delta_0 \log n_{1}}{n_{1}}}\cdot \mbox{diam}_1(B_{\mathcal{A}}^{\circ}) \right) &\ge& 1 - \Pr \left( \bigcap_{i=1}^M \left\{ \bar{y}^T\zeta^i < \sqrt{\frac{ \delta_0 \log n_{1}}{n_{1}}}\cdot \|\bar{y}\|_1 \right\}\right)\\
\noalign{\medskip}
& \geq & 1 - \left( 1 - \frac{c_0}{n_1^{\delta_0}} \right)^M \\
\noalign{\medskip}
&\ge& \frac{1}{2},
\end{eqnarray*}
which establishes the claim.

\medskip
\noindent{\bf Case 2: $q\in(1,2)$.}  Let $\delta_1,c_1,c_2$ be as in (\ref{eq:q-norm-const}) and set $M=(\ln 2)n_1^{c_2}/c_1$.  Consider a collection $\{\xi_j^i:i=1,\ldots,M;\, j=1,\ldots,{n_1}\}$ of i.i.d.~random variables with density $p\cdot\exp(-|t|^p)/(2\Gamma(1/p))$, where $p=q/(q-1)$.  Define
\begin{eqnarray*}
 & \displaystyle{ \xi^i=(\xi_1^i,\ldots,\xi_{n_1}^i) \in \R^{n_1}, \quad \bar{\xi}^i = \frac{\xi^i}{\|\xi^i\|_p} \quad \mbox{for } i=1,\ldots,M, } & \\
\noalign{\medskip}
& \displaystyle{ i^* = \arg\max_{1\le i\le M} \|\bar{\xi}^i\|_\mathcal{A}, \quad v = \bar{\xi}^{i^*}, \quad \tau = 2\|v\|_\mathcal{A}. } & 
\end{eqnarray*}
Using Proposition \ref{prop:prob}(b) and our previous argument, we have $\tau \le \mbox{diam}_q(B_\mathcal{A}^\circ)$ and
\begin{eqnarray*}
\Pr \left ( \tau \ge \sqrt{\frac{ \delta_1 \log n_{1}}{n_{1}}}\cdot \mbox{diam}_{q}(B_{\mathcal{A}}^{\circ}) \right) &\ge& 1 - \Pr \left( \bigcap_{i=1}^M \left\{ \bar{y}^T\bar{\xi}^i < \sqrt{\frac{ \delta_1 \log n_{1}}{n_{1}}}\cdot \|\bar{y}\|_{q} \right\}\right)\\
\noalign{\medskip}
& \geq & 1 - \left( 1 - \frac{c_{1}}{n_{1}^{c_{2}}} \right)^M \\
\noalign{\medskip}
&\ge& \frac{1}{2}.
\end{eqnarray*}
This completes the proof of Proposition \ref{prop:rand-diam-approx}.
\end{proof}

\medskip
\noindent By combining Proposition \ref{prop:rand-diam-approx} with the procedure outlined in the paragraph above Theorem \ref{thm:trilinear}, we can extract an $\Omega(\sqrt{\log n_1/n_1})$--approximate solution to Problem (\ref{eq:trilinear}).  Thus, we have proven the following theorem:
\begin{thm}
For any given $p\in(2,\infty]$, there is a randomized polynomial--time approximation algorithm for Problem (\ref{eq:trilinear}) with approximation ratio $\Omega(\sqrt{\log n_{1}/ n_{1}})$.  In particular, for $d=3$ and any given $p\in(2,\infty]$, there is a randomized polynomial--time approximation algorithm for Problem $({\sf HP})$ with approximation ratio $\Omega(\sqrt{\log n/ n})$.
\end{thm}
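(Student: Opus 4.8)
The plan is to replay the reduction chain behind the deterministic Theorem~\ref{thm:trilinear}, replacing the deterministic diameter approximation (Theorem~\ref{thm:lq-diam-approx}) by the randomized one (Proposition~\ref{prop:rand-diam-approx}). As there, by~\cite[Section~3.1]{S11a} we may assume without loss that the $(n_2\times n_3)\times n_1$ matrix $A$ of~(\ref{eq:cubic-matrix}) has full column rank, so that $x^1\mapsto{\sf vec}_p(\mathcal{A}(x^1))=\|x^1\|_\mathcal{A}$ is a norm on $\R^{n_1}$ (Proposition~\ref{prop:norm}) and $B_\mathcal{A}^\circ$, the polar of its unit ball, is a well--defined centrally symmetric convex body. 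First I would run the algorithm of Proposition~\ref{prop:rand-diam-approx} with $q=p/(p-1)\in[1,2)$ to obtain $v\in\R^{n_1}$ satisfying, with probability at least $1/2$, $\Omega(\sqrt{\log n_1/n_1})\cdot\mbox{diam}_q(B_\mathcal{A}^\circ)\le 2\|v\|_\mathcal{A}\le\mbox{diam}_q(B_\mathcal{A}^\circ)$. An inspection of that proof shows $v$ also has $\|v\|_p\le1$ (it is a $0/1$ vector when $p=\infty$ and an $L_p$--normalized vector when $p\in(2,\infty)$), so I would take $\bar{x}^1:=v$ directly as a feasible first block for Problem~(\ref{eq:trilinear}), bypassing the H\"older--type renormalization needed in the deterministic argument.

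Next I would translate the diameter bound into a bound on $v_{\sf ML}(\mathcal{A},3)$: chaining~(\ref{maxmax}), the first inequality of Proposition~\ref{prop:grothendieck}, and~(\ref{eq:norm-diam}) gives $v_{\sf ML}(\mathcal{A},3)=\max_{\|x^1\|_p\le1}\|\mathcal{A}(x^1)\|_{p\limto q}\le\max_{\|x^1\|_p\le1}\|x^1\|_\mathcal{A}=\tfrac12\mbox{diam}_q(B_\mathcal{A}^\circ)$, whence ${\sf vec}_p(\mathcal{A}(\bar{x}^1))=\|\bar{x}^1\|_\mathcal{A}\ge\Omega(\sqrt{\log n_1/n_1})\cdot v_{\sf ML}(\mathcal{A},3)$. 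Then, applying the \emph{deterministic} version of Algorithm~\ref{alg:pq-norm} to the $n_2\times n_3$ matrix $\mathcal{A}(\bar{x}^1)$ --- exactly as in the derivation of~(\ref{eq:tri-vec}) --- I would extract $\bar{x}^2\in\R^{n_2}$, $\bar{x}^3\in\R^{n_3}$ with $\|\bar{x}^2\|_p=\|\bar{x}^3\|_p=1$ and $\sum_{i,j,k}a_{ijk}\bar{x}^1_i\bar{x}^2_j\bar{x}^3_k\ge\tfrac1{27}{\sf vec}_p(\mathcal{A}(\bar{x}^1))$. Since $\tfrac1{27}$ and the absorbed factor $\tfrac12$ are constants, $(\bar{x}^1,\bar{x}^2,\bar{x}^3)$ is feasible for Problem~(\ref{eq:trilinear}) with objective value at least $\Omega(\sqrt{\log n_1/n_1})\cdot v_{\sf ML}(\mathcal{A},3)$. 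Because the objective of the returned triple is computable exactly, repeating the whole procedure $O(\log(1/\delta))$ times independently and keeping the best triple boosts the success probability past $1-\delta$ for any fixed $\delta>0$; and, as in the deterministic case, a routine perturbation analysis shows that inexact solves of~(\ref{eq:pqnorm-cvx}) and of weak membership for $B_\mathcal{A}^\circ$ change nothing (cf.~\cite{S11a}). This gives the first assertion.

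For the ``in particular'' part, specialize to $n_1=n_2=n_3=n$ with $\mathcal{A}$ a non--zero super--symmetric order--$3$ tensor: Problem~(\ref{eq:trilinear}) is then Problem~$({\sf MR})$ with $d=3$, so the above yields a randomized polynomial--time algorithm $\mathscr{A}_{\sf MR}$ achieving ratio $\alpha=\Omega(\sqrt{\log n/n})$. Plugging $\mathscr{A}_{\sf MR}$ into Theorem~\ref{hpml} with $d=3$ (odd), where $d!\,d^{-d}=\tfrac29$, produces a randomized polynomial--time algorithm $\mathscr{A}_{\sf HP}$ that returns $\hat{x}$ with $\|\hat{x}\|_p\le1$ and $f_\mathcal{A}(\hat{x})\ge\alpha\cdot\tfrac29\cdot\bar{v}=\Omega(\sqrt{\log n/n})\cdot\bar{v}$; the randomness of $\mathscr{A}_{\sf MR}$ simply propagates through the (purely algebraic) reduction underlying Theorem~\ref{hpml}, with the guarantees holding with the same boostable probability.

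I do not expect a genuine obstacle, since the substantive facts are already in hand. The two points that need attention are (i) reading off from the proof of Proposition~\ref{prop:rand-diam-approx} that its output is automatically $L_p$--feasible, so that it can serve directly as $\bar{x}^1$, and (ii) the amplification that converts the constant--probability guarantee of the randomized diameter step into a clean high--probability statement. Keeping the \emph{deterministic} Grothendieck rounding inside Algorithm~\ref{alg:pq-norm} confines the randomness to a single stage, which makes (ii) immediate; everything else is a transcription of the reductions already established for the deterministic algorithm.
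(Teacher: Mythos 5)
Your proposal is correct and is essentially the paper's own argument: the paper proves this theorem precisely by combining Proposition~\ref{prop:rand-diam-approx} with the extraction procedure developed for the deterministic case (use the returned vector, which is already $L_p$--feasible, as $\bar{x}^1$, then run Algorithm~\ref{alg:pq-norm} on $\mathcal{A}(\bar{x}^1)$ and invoke Theorem~\ref{hpml} for the $({\sf HP})$ statement). One cosmetic nit: the Bernoulli vector in Case~1 is $\pm1$--valued rather than $0/1$--valued, though either way $\|v\|_\infty\le1$ and your feasibility claim stands.
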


\subsection{General Case: Approximating $L_p$--Ball Constrained Multilinear Maximization via Recursion}

Now, let us consider the problem of maximizing a degree--$d$ multilinear form over $L_p$--balls, where $d\ge4$ and $p\in(2,\infty]$ are fixed.  Our approach is based on the following simple observation:  Let $\mathcal{A} \in \R^{n_{1}\times n_{2}\times \cdots \times n_{d}}$ be an arbitrary non--zero order--$d$ tensor.  Then,
$$ v_{\sf ML}(\mathcal{A},d) = \max_{\|x^1\|_p\le1} v_{\sf ML}(\mathcal{A}(x^1),d-1). $$
This suggests that it may be possible to approximate the degree--$d$ problem $v_{\sf ML}(\mathcal{A},d)$ if we have an algorithm for approximating the degree--$(d-1)$ problem $v_{\sf ML}(\mathcal{B},d-1)$, where $\mathcal{B}$ is an arbitrary non--zero order--$(d-1)$ tensor.  To implement this idea, we proceed as follows.  Let $\mathcal{H}$ be an arbitrary Hilbert space.  Given an arbitrary non--zero order--$d$ tensor $\mathcal{A} = (a_{i_1i_2\cdots i_d}) \in \R^{n_1 \times \cdots \times n_d}$, let $F_\mathcal{A}$ be the associated multilinear form, and define a function $\tilde{F}_\mathcal{A}:\R^{n_1}\times\cdots\times\R^{n_{d-2}}\times \mathcal{H}^{n_{d-1}} \times \mathcal{H}^{n_d} \limto \R$ by
$$ \tilde{F}_\mathcal{A}\left( x^1,\ldots,x^{d-2},\{u_j\}_{j=1}^{n_{d-1}},\{v_k\}_{k=1}^{n_d} \right) = \sum_{i_1=1}^{n_1}\cdots\sum_{i_{d-2}=1}^{n_{d-2}}\sum_{j=1}^{n_{d-1}}\sum_{k=1}^{n_d} a_{i_1\cdots i_{d-2}jk} \cdot x_{i_1}^1\cdots x_{i_{d-2}}^{d-2} \cdot u_j^Tv_k. $$
By Proposition \ref{prop:grothendieck}, for any given $\bar{x}^i \in \R^{n_i}$, where $i=1,\ldots,d-2$, we have
$$ \frac{1}{K_G} \cdot {\sf vec}_p\left( \mathcal{A}(\bar{x}^1,\ldots,\bar{x}^{d-2}) \right) \le \left\| \mathcal{A}(\bar{x}^1,\ldots,\bar{x}^{d-2}) \right\|_{p\limto q} \le {\sf vec}_p\left( \mathcal{A}(\bar{x}^1,\ldots,\bar{x}^{d-2}) \right). $$
Since
$$ \left\| \mathcal{A}(\bar{x}^1,\ldots,\bar{x}^{d-2}) \right\|_{p\limto q} = \max\left\{ F_\mathcal{A}(\bar{x}^1,\ldots,\bar{x}^{d-2},x^{d-1},x^d) : \|x^{d-1}\|_p \le 1, \, \|x^d\|_p \le 1 \right\}, $$
it follows that
$$ \frac{1}{K_G} \cdot r_{\sf ML}(\mathcal{A},d) \le \max_{\|x^i\|_p \le 1, \,i=1,\ldots,d}  F_\mathcal{A}(x^1,\ldots,x^d) = v_{\sf ML}(\mathcal{A},d) \le r_{\sf ML}(\mathcal{A},d), $$
where
\begin{equation} \label{eq:ML-relax}
\begin{array}{ccc@{\quad}l}
   r_{\sf ML}(\mathcal{A},d) &=& \mbox{maximize} & \displaystyle{ \tilde{F}_\mathcal{A}\left( x^1,\ldots,x^{d-2},\{u_j\}_{j=1}^{n_{d-1}},\{v_k\}_{k=1}^{n_d} \right) } \\
   \noalign{\medskip}
   & & \mbox{subject to} & \|x^i\|_p\le 1 \quad \mbox{for } i=1,\ldots,d-2, \\
   \noalign{\medskip}
   & & & \|\mathbf{u}\|_p \le 1, \, \|\mathbf{v}\|_p \le 1, \\
   \noalign{\medskip}
   & & & \mathbf{u}=(\|u_1\|_2,\ldots,\|u_{n_{d-1}}\|_2) \in \R^{n_{d-1}}, \\
   \noalign{\medskip}
   & & & \mathbf{v}=(\|v_1\|_2,\ldots,\|v_{n_d}\|_2) \in \R^{n_d}.
\end{array}
\end{equation}
In particular, $v_{\sf ML}(\mathcal{A},d)$ and $r_{\sf ML}(\mathcal{A},d)$ are equivalent from the approximation perspective.  In the sequel, we shall focus on designing approximation algorithms for the latter using both deterministic and randomized approaches.

\subsubsection{Deterministic Approximation of $r_{\sf ML}(\mathcal{A},d)$}
Our deterministic approach is motivated by the results developed in~\cite{S11a}.  Before delving into the details, let us give an overview of the approach.  Suppose there is a deterministic algorithm that can approximate the problem $r_{\sf ML}(\mathcal{B},d-1)$ for any non--zero order--$(d-1)$ tensor $\mathcal{B}$, where $d\ge4$ is fixed.  Then, given an arbitrary $x^1 \in \R^{n_1}$, since $\mathcal{A}(x^1)$ is an order--$(d-1)$ tensor, we can apply the algorithm to the problem
$$
\begin{array}{ccc@{\quad}l}
   r_{\sf ML}(\mathcal{A}(x^1),d-1) &=& \mbox{maximize} & \displaystyle{ \tilde{F}_\mathcal{A}\left( x^1,\ldots,x^{d-2},\{u_j\}_{j=1}^{n_{d-1}},\{v_k\}_{k=1}^{n_d} \right) } \\
   \noalign{\medskip}
   & & \mbox{subject to} & \|x^i\|_p\le 1 \quad \mbox{for } i=2,\ldots,d-2, \\
   \noalign{\medskip}
   & & & \|\mathbf{u}\|_p \le 1, \, \|\mathbf{v}\|_p \le 1, \\
   \noalign{\medskip}
   & & & \mathbf{u}=(\|u_1\|_2,\ldots,\|u_{n_{d-1}}\|_2) \in \R^{n_{d-1}}, \\
   \noalign{\medskip}
   & & & \mathbf{v}=(\|v_1\|_2,\ldots,\|v_{n_d}\|_2) \in \R^{n_d}
\end{array}
$$
and obtain a value $G_{d-1}(x^1)$ that satisfies $\beta_{d-1} \cdot r_{\sf ML}(\mathcal{A}(x^1),d-1) \le G_{d-1}(x^1) \le r_{\sf ML}(\mathcal{A}(x^1),d-1)$, where $\beta_{d-1} \in (0,1)$ is the approximation ratio of the algorithm.  Since this holds for any $x^1 \in \R^{n_1}$, it follows that
$$ \beta_{d-1} \cdot r_{\sf ML}(\mathcal{A},d) \le \max_{\|x^1\|_p \le 1} G_{d-1}(x^1) \le r_{\sf ML}(\mathcal{A},d). $$
Now, if we can show that the function $G_{d-1}$ defines a norm on $\R^{n_1}$, then $\max_{\|x^1\|_p\le1} G_{d-1}(x^1)$ is a norm maximization problem, which can be approximated using the techniques outlined in Section \ref{sec:base}.  This would then yield an approximation algorithm for the problem $r_{\sf ML}(\mathcal{A},d)$.

To carry out this plan, we need the following result:
\begin{prop}\label{prop1}
Let $d \ge 3$ and $p \in (2,\infty]$ be given.  For $i=1,\ldots,d-3$, let $P_i$ be a centrally symmetric polytope in $\R^{n_{i+1}}$ satisfying the properties stated in Theorem \ref{thm:lq-diam-approx}.  Furthermore, let $\mathcal{A}=(a_{i_{1}i_{2}\cdots i_{d}}) \in \R^{ n_{1}\times n_{2}\times \cdots \times n_{d}}$ be an arbitrary non--zero order--$d$ tensor.  Define the functions $\Lambda^{\mathcal{A},d}_{i}: \R^{n_1} \times \R^{n_2} \times \cdots \times \R^{n_i} \limto \R_+$ for $i=1,\ldots,d-2$ inductively as follows:
\begin{equation} \label{eq:recursive-norm}
\begin{array}{rcl}
\displaystyle{ \Lambda^{\mathcal{A},d}_{d-2}(x^{1},x^{2},\ldots,x^{d-2}) } &=&  \displaystyle{ {\sf vec}_p\left( \mathcal{A}(x^{1},x^{2},\ldots,x^{d-2}) \right), } \\
\noalign{\medskip}
\displaystyle{ \Lambda^{\mathcal{A},d}_{i}(x^{1},x^{2},\ldots,x^{i}) } &=& \displaystyle{ \mbox{diam}_{P_i} \left[ \left\{ y \in \R^{n_{i+1}}:\Lambda^{\mathcal{A},d}_{i+1}(x^{1},x^{2},\ldots,x^{i},y)\leq 1 \right\}^{\circ} \right] }
\end{array}
\end{equation}
for $i=d-3,d-4,\ldots,1$.  Then, the following hold:
\begin{enumerate}
\item[\subpb] For $j=1,2,\ldots,d-2$ and for any $\bar{x}^1,\ldots,\bar{x}^{k-1},\bar{x}^{k+1},\ldots,\bar{x}^j$, where $\bar{x}^i \in \R^{n_i}$, the function $\bar{\Lambda}^{\mathcal{A},d}_{j,k}: \R^{n_{k}} \limto \R_{+}$ given by
$$\bar{\Lambda}^{\mathcal{A},d}_{j,k}(x)=\Lambda^{\mathcal{A},d}_{j}(\bar{x}^{1},\ldots,\bar{x}^{k-1},x,\bar{x}^{k+1},\ldots,\bar{x}^{j})$$
is a semi--norm on $\R^{n_{k}}$ for any $k\in \{1,\ldots,j\}$.

\item[\subpb] Let $A$ be the $(n_{2}\times \cdots \times n_{d})\times n_{1}$ matrix given by
\begin{equation} \label{eq:flatten}
A_{(i_{2},\ldots,i_{d}),i_{1}} = a_{i_{1}i_{2}\cdots i_{d}} \quad\mbox{for } i_j = 1,\ldots,n_j; \, j=1,\ldots,d.
\end{equation}
Suppose that $A$ has full column rank.  Then, the function $\Lambda^{\mathcal{A},d}_{1}$ defines a norm on $\R^{n_{1}}$.

\item[\subpb] We have 
$$ \Lambda_{i-1}^{\mathcal{A}(x^1),d-1}(x^2,x^3,\ldots,x^i) = \Lambda_i^{\mathcal{A},d}(x^1,x^2,\ldots,x^i) \quad\mbox{for } i=2,3,\ldots,d-2. $$
\end{enumerate}
\resetspb
\end{prop}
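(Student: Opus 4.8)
The plan is to replace the polar--and--diameter construction in (\ref{eq:recursive-norm}) by a more transparent recursion and then push the whole statement through a downward induction on $i$. Concretely, I claim that as soon as $y\mapsto\Lambda^{\mathcal{A},d}_{i+1}(x^1,\ldots,x^i,y)$ is known to be a finite, continuous semi--norm on $\R^{n_{i+1}}$, one has
$$ \Lambda^{\mathcal{A},d}_{i}(x^1,\ldots,x^i) = 2\sup_{z\in P_i^\circ}\Lambda^{\mathcal{A},d}_{i+1}(x^1,\ldots,x^i,z), \qquad i=d-3,d-4,\ldots,1. $$
Indeed, writing $N(\cdot)=\Lambda^{\mathcal{A},d}_{i+1}(x^1,\ldots,x^i,\cdot)$ and $C=\{y:N(y)\le1\}$, the set $C$ is closed, convex, centrally symmetric and contains the origin in its interior (since $\{N<1\}$ is open and contained in $C$); hence $C^{\circ}$ is a centrally symmetric convex body, its support function is the gauge $\gamma_C=N$, and, since $\|w\|_{P_i}=\sup_{z\in P_i^{\circ}}\langle z,w\rangle$ and $P_i^{\circ}$ is compact (because $B_q^{n_{i+1}}(1)\subseteq P_i$), we get $\mbox{diam}_{P_i}(C^{\circ})=2\sup_{w\in C^{\circ}}\|w\|_{P_i}=2\sup_{z\in P_i^{\circ}}\sup_{w\in C^{\circ}}\langle z,w\rangle=2\sup_{z\in P_i^{\circ}}N(z)$, which is finite. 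The point to keep in mind is that the validity of this identity and the semi--norm property of $\Lambda^{\mathcal{A},d}_{i+1}$ are intertwined, so both must be carried along in the same induction.

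For part (a) I would induct downward on $i$, the inductive hypothesis being: ``$\Lambda^{\mathcal{A},d}_{i}$ is finite--valued, for every $k\le i$ and every choice of the remaining arguments the map $\bar\Lambda^{\mathcal{A},d}_{i,k}$ is a semi--norm on $\R^{n_k}$, and (when $i\le d-3$) the displayed recursion holds.'' For the base case $i=d-2$, recall from the proof of Proposition~\ref{prop:norm} (see (\ref{eq:pqnorm-vec})) that for any matrix $M$ the quantity ${\sf vec}_p(M)$ equals the supremum of $\sum_{j,k}M_{jk}\,u_j^{T}v_k$ over a fixed (that is, $M$--independent) set of configurations $(\{u_j\},\{v_k\})$ which is invariant under negating all the $u_j$; since the entries of $M=\mathcal{A}(x^1,\ldots,x^{d-2})$ are multilinear in $(x^1,\ldots,x^{d-2})$, fixing all but the $k$--th argument exhibits $\bar\Lambda^{\mathcal{A},d}_{d-2,k}$ as the pointwise supremum of a symmetric family of linear functionals, hence as a finite semi--norm (finiteness from boundedness of the convex program defining ${\sf vec}_p$, non--negativity from ${\sf vec}_p\ge\|\cdot\|_{p\limto q}\ge0$). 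For the inductive step, fix $k\le i$ and all the other arguments; by the displayed recursion $\bar\Lambda^{\mathcal{A},d}_{i,k}(x)=2\sup_{z\in P_i^{\circ}}\Lambda^{\mathcal{A},d}_{i+1}(\ldots,x,\ldots,z)$, and for each fixed $z\in P_i^{\circ}$ the inner function of $x$ is a semi--norm by the inductive hypothesis; a supremum over a compact parameter set of continuous semi--norms is again a finite continuous semi--norm, and multiplication by $2$ preserves this. This proves (a).

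For part (b), by (a) we already know $\Lambda^{\mathcal{A},d}_{1}$ is a semi--norm on $\R^{n_1}$, so only positive definiteness remains. Iterating the displayed recursion from $i=1$ up to $i=d-2$ shows that $\Lambda^{\mathcal{A},d}_{1}(x^1)$ equals $2^{d-3}\sup\{{\sf vec}_p(\mathcal{A}(x^1,z^2,\ldots,z^{d-2})):z^j\in P_{j-1}^{\circ},\ j=2,\ldots,d-2\}$. Since each $P_{j-1}^{\circ}$ is a neighbourhood of the origin and, by (a), ${\sf vec}_p(\mathcal{A}(x^1,\cdot,\ldots,\cdot))$ is positively homogeneous in each slot, this supremum is strictly positive as soon as ${\sf vec}_p(\mathcal{A}(x^1,z^2,\ldots,z^{d-2}))>0$ for \emph{some} (now unconstrained) $z^2,\ldots,z^{d-2}$; and since ${\sf vec}_p(M)\ge\|M\|_{p\limto q}$, this holds whenever the matrix $\mathcal{A}(x^1,z^2,\ldots,z^{d-2})$ is non--zero. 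Now if $x^1\ne\bz$, full column rank of $A$ forces the order--$(d-1)$ tensor $\mathcal{A}(x^1)$ to be non--zero, say $[\mathcal{A}(x^1)]_{i_2^{*}\cdots i_d^{*}}\ne0$; taking $z^{j}$ to be the $i_j^{*}$--th standard basis vector for $j=2,\ldots,d-2$ makes the $(i_{d-1}^{*},i_d^{*})$--entry of $\mathcal{A}(x^1,z^2,\ldots,z^{d-2})$ equal to $[\mathcal{A}(x^1)]_{i_2^{*}\cdots i_d^{*}}\ne0$. Hence $\Lambda^{\mathcal{A},d}_{1}(x^1)>0$, as needed.

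For part (c), I would again induct downward on $i$, from $i=d-2$ to $i=2$, adopting throughout the natural relabelling under which the $j$--th polytope used for the order--$(d-1)$ tensor $\mathcal{A}(x^1)\in\R^{n_2\times\cdots\times n_d}$ is taken to be $P_{j+1}$ (both live in $\R^{n_{j+2}}$). The base case $i=d-2$ is the identity $\mathcal{A}(x^1)(x^2,\ldots,x^{d-2})=\mathcal{A}(x^1,x^2,\ldots,x^{d-2})$ (contracting a tensor against vectors in several modes can be carried out one mode at a time in any order, being just a rearrangement of finite sums), applied inside ${\sf vec}_p$. For the inductive step, if $\Lambda^{\mathcal{A}(x^1),d-1}_{i}(x^2,\ldots,x^i,y)=\Lambda^{\mathcal{A},d}_{i+1}(x^1,x^2,\ldots,x^i,y)$ for all $y$, then the bodies $\{y:\Lambda^{\mathcal{A}(x^1),d-1}_{i}(x^2,\ldots,x^i,y)\le1\}$ and $\{y:\Lambda^{\mathcal{A},d}_{i+1}(x^1,\ldots,x^i,y)\le1\}$ coincide, so do their polars, and applying $\mbox{diam}_{P_i}(\cdot)$ to the common polar (the relevant polytope is $P_i$ on the order--$d$ side and, under the relabelling, the same $P_i$ on the order--$(d-1)$ side) gives $\Lambda^{\mathcal{A}(x^1),d-1}_{i-1}(x^2,\ldots,x^i)=\Lambda^{\mathcal{A},d}_{i}(x^1,\ldots,x^i)$. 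The main obstacle throughout is precisely the bookkeeping that couples the recursion identity with the semi--norm property: one cannot even speak of $\mbox{diam}_{P_i}$ of the polar in (\ref{eq:recursive-norm}) before knowing that the body it polarizes has the origin in its interior, which rests on $\Lambda^{\mathcal{A},d}_{i+1}(x^1,\ldots,x^i,\cdot)$ being a genuine finite semi--norm; everything else (the base case via the Grothendieck--type formula for ${\sf vec}_p$, the definiteness reduction in (b), and the formal identity in (c)) is then routine.
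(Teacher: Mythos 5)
Your proposal is correct, and for part (c) it coincides with the paper's own argument: a backward induction on $i$ in which equality of the level-$(i+1)$ functions gives equality of the sublevel sets and their polars, to which $\mbox{diam}_{P_i}$ is applied (with the same relabelling of the polytopes for $\mathcal{A}(x^1)$ that the paper uses implicitly and you make explicit). For parts (a) and (b), which the paper dispatches by citing an adaptation of \cite[Proposition 4]{S11a}, your self-contained route is a legitimate substitute and arguably cleaner: the duality identity $\mbox{diam}_{P_i}\bigl(\{y:N(y)\le1\}^\circ\bigr)=2\sup_{z\in P_i^\circ}N(z)$ (support function of the polar equals the gauge) turns the recursion (\ref{eq:recursive-norm}) into a nested supremum over the compact sets $P_i^\circ$, after which the semi--norm property in each slot follows from the sup--of--linear--functionals representation (\ref{eq:pqnorm-vec}) exactly as in Proposition \ref{prop:norm}, and definiteness in (b) follows from full column rank of $A$ together with contraction against standard basis vectors. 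Two minor points to tighten, neither a real gap: the blanket claim that a supremum of continuous semi--norms over a compact parameter set is finite should be replaced by the pointwise justification your induction actually provides (for fixed $x^1,\ldots,x^i$, the map $z\mapsto\Lambda^{\mathcal{A},d}_{i+1}(x^1,\ldots,x^i,z)$ is a finite semi--norm by the inductive hypothesis, hence continuous and bounded on the compact set $P_i^\circ$); and when $N$ is only a semi--norm the polar $\{N\le1\}^\circ$ need not be a convex \emph{body} (it may have empty interior), but your diameter computation uses only central symmetry and boundedness, so the identity still holds.
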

\begin{proof}
Both (a) and (b) are essentially adaptations of the corresponding claims in~\cite[Proposition 4]{S11a}.  To prove (c), we proceed by backward induction on $i$.  For $i=d-2$, we have, by definition,
\begin{eqnarray*}
\Lambda_{d-3}^{\mathcal{A}(x^1),d-1}(x^2,x^3,\ldots,x^{d-2}) &=& {\sf vec}_p\left( [\mathcal{A}(x^1)](x^2,x^3,\ldots,x^{d-2}) \right) \\
\noalign{\medskip}
&=& {\sf vec}_p\left( \mathcal{A}(x^1,x^2,\ldots,x^{d-2}) \right) \\
\noalign{\medskip}
&=& \Lambda_{d-2}^{\mathcal{A},d}(x^1,x^2,\ldots,x^{d-2}).
\end{eqnarray*}
For the inductive step, we use both the definition in (\ref{eq:recursive-norm}) and the inductive hypothesis to obtain
\begin{eqnarray*}
 \Lambda^{\mathcal{A}(x^1),d-1}_{i-1}(x^{2},x^3,\ldots,x^{i}) &=& \mbox{diam}_{P_i} \left[ \left\{ y \in \R^{n_{i+1}}:\Lambda^{\mathcal{A}(x^1),d-1}_i(x^{2},x^3,\ldots,x^{i},y)\leq 1 \right\}^{\circ} \right] \\
\noalign{\medskip}
&=& \mbox{diam}_{P_i} \left[ \left\{ y \in \R^{n_{i+1}}:\Lambda^{\mathcal{A},d}_{i+1}(x^1,x^2,\ldots,x^{i},y)\leq 1 \right\}^{\circ} \right] \\
\noalign{\medskip}
&=& \Lambda_i^{\mathcal{A},d}(x^1,x^2,\ldots,x^i).
\end{eqnarray*}
This completes the proof.
\end{proof}

\medskip
We are now ready to prove the main result of this section:
\begin{thm}\label{thm3}
Let $d\geq 3$ and $p \in (2,\infty]$ be given.  Let $\mathcal{A}=(a_{i_{1}i_{2}\cdots i_d})\in \R^{n_{1}\times n_{2}\times \cdots \times n_{d}}$ be an arbitrary non--zero order--$d$ tensor.  Consider the functions $\{\Lambda^{\mathcal{A},d}_{i}\}^{d-2}_{i=1}$ defined in (\ref{eq:recursive-norm}) and the $(n_{2}\times \cdots \times n_{d})\times n_{1}$ matrix $A$ defined in (\ref{eq:flatten}).  Suppose that $A$ has full column rank.  Then, the following hold:
\begin{enumerate}
\item[\subpb] For any given $x \in \R^{n_1}$, the norm $\Lambda_1^{\mathcal{A},d}(x)$ is \emph{efficiently computable}; i.e., it can be computed to any desired accuracy by a deterministic algorithm whose runtime is polynomial in the input size of Problem (\ref{eq:ML-relax}) and the level of accuracy.

\item[\subpb] There exist rational numbers $0 < r \le R < \infty$, whose encoding lengths are polynomially bounded by the input size of Problem (\ref{eq:ML-relax}), such that 
$$ B_2^{n_1}(r) \subset \left\{ x \in \R^{n_1} : \Lambda_1^{\mathcal{A},d}(x) \le 1 \right\} \subset B_2^{n_1}(R). $$
Consequently, the quantity $\mbox{diam}_{P_0}\left( \left\{ x \in \R^{n_1} : \Lambda_1^{\mathcal{A},d}(x) \le 1 \right\}^\circ \right)$ can be efficiently computed, where $P_0$ is a centrally symmetric polytope in $\R^{n_1}$ satisfying the properties stated in Theorem \ref{thm:lq-diam-approx}. 

\item[\subpb] We have
$$
\Omega\left( \prod_{i=1}^{d-2} \frac{(\log n_i)^{1/p}}{n_i^{1/2}} \right) \cdot r_{\sf ML}(\mathcal{A},d) \le \frac{1}{2}\mbox{diam}_{P_0}\left( \left\{ x \in \R^{n_1} : \Lambda_1^{\mathcal{A},d}(x) \le 1 \right\}^\circ \right) \le r_{\sf ML}(\mathcal{A},d).
$$
\end{enumerate}
\resetspb
In particular, there is a deterministic polynomial--time algorithm for Problem $({\sf ML})$ with approximation ratio $\Omega\left( \prod_{i=1}^{d-2} (\log n_i)^{1/p} \big/ n_i^{1/2} \right)$.
\end{thm}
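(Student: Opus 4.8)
The plan is to prove parts (a)--(c) by induction on $d$ and then read off the final claim. Throughout, as in Section~\ref{sec:base}, we may assume without loss of generality---via the regularization argument of~\cite[Section 3.1]{S11a}---that the flattening $A$ in~(\ref{eq:flatten}) has full column rank, so that $\Lambda_1^{\mathcal{A},d}$ is a genuine norm by Proposition~\ref{prop1}(b). The base case $d=3$ is exactly the development of Section~\ref{sec:base}: there $\Lambda_1^{\mathcal{A},3}(x)={\sf vec}_p(\mathcal{A}(x))$, so (a) follows from the polynomial-time solvability of the convex program~(\ref{eq:pqnorm-cvx}); (b) is Proposition~\ref{prop:ellip-prop}(a) applied to $\{x:\Lambda_1^{\mathcal{A},3}(x)\le1\}=B_\mathcal{A}$; and (c) is precisely the chain~(\ref{eq:norm-diam})--(\ref{eq:qq}) together with Theorem~\ref{thm:lq-diam-approx}.

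For the inductive step, fix $d\ge4$ and assume the theorem for order $d-1$. The engine is Proposition~\ref{prop1}(c), which combined with~(\ref{eq:recursive-norm}) gives $\Lambda_1^{\mathcal{A},d}(x^1)=\mbox{diam}_{P_1}(\{y:\Lambda_1^{\mathcal{A}(x^1),d-1}(y)\le1\}^\circ)$; thus one evaluation of $\Lambda_1^{\mathcal{A},d}$ is a single $\mbox{diam}_{P_1}$-computation governed by the inductive hypothesis applied to the order-$(d-1)$ tensor $\mathcal{A}(x^1)$ (with $P_1,\ldots,P_{d-3}$ playing the roles of $P_0,\ldots,P_{d-4}$). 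For (a): when the flattening of $\mathcal{A}(x^1)$ has full column rank, the inductive (a) and (b) make $\Lambda_1^{\mathcal{A}(x^1),d-1}$ efficiently computable and $\{y:\Lambda_1^{\mathcal{A}(x^1),d-1}(y)\le1\}$ well-bounded with polynomially bounded $r,R$; a deterministic weak-membership oracle for it---and hence for its polar---follows, and Theorem~\ref{thm:lq-diam-approx} then computes $\mbox{diam}_{P_1}$, i.e.\ $\Lambda_1^{\mathcal{A},d}(x^1)$, efficiently. When the flattening of $\mathcal{A}(x^1)$ is rank-deficient, $\Lambda_1^{\mathcal{A}(x^1),d-1}$ is still a semi-norm by Proposition~\ref{prop1}(a) and one works on the orthogonal complement of its kernel, exactly as in~\cite{S11a}. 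For (b): one propagates $B_2$-sandwiches through~(\ref{eq:recursive-norm}), each polar operation inverting a two-sided radius bound and each $\mbox{diam}_{P_i}$ contributing the distortion factor of order $n_{i+1}^{1/2}/(\log n_{i+1})^{1/p}$ from Theorem~\ref{thm:lq-diam-approx}, with the innermost sandwich for $\{x:{\sf vec}_p(\mathcal{A}(\cdot))\le1\}$ supplied by the full-column-rank assumption on $A$ as in Proposition~\ref{prop:ellip-prop}(a); this yields rational $0<r\le R<\infty$ of polynomially bounded encoding length, after which the oracle from (a) and Theorem~\ref{thm:lq-diam-approx} make $\mbox{diam}_{P_0}$ of $\{x:\Lambda_1^{\mathcal{A},d}(x)\le1\}^\circ$ efficiently computable.

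For (c): unfold the recursion using the duality identity $\max_{\|x\|_p\le1}N(x)=\frac12\mbox{diam}_q(B_N^\circ)$ (cf.~(\ref{eq:norm-diam})) and the two-sided estimate $\Omega((\log m)^{1/p}m^{-1/2})\cdot\mbox{diam}_q(K)\le\mbox{diam}_P(K)\le\mbox{diam}_q(K)$ of Theorem~\ref{thm:lq-diam-approx}. Applied at level $i$, these show that $\Lambda_i^{\mathcal{A},d}(x^1,\ldots,x^i)$ agrees, up to a factor of order $(\log n_{i+1})^{1/p}n_{i+1}^{-1/2}$ and an absolute constant, with $\max_{\|x^{i+1}\|_p\le1}\Lambda_{i+1}^{\mathcal{A},d}(x^1,\ldots,x^{i+1})$; iterating from $i=1$ down to $i=d-2$, then using $\Lambda_{d-2}^{\mathcal{A},d}={\sf vec}_p(\mathcal{A}(\cdot))$ and the identity $r_{\sf ML}(\mathcal{A},d)=\max_{\|x^i\|_p\le1,\,i=1,\ldots,d-2}{\sf vec}_p(\mathcal{A}(x^1,\ldots,x^{d-2}))$ (which is~(\ref{eq:pqnorm-vec}) substituted into~(\ref{eq:ML-relax})), shows $\max_{\|x^1\|_p\le1}\Lambda_1^{\mathcal{A},d}(x^1)$ equals, up to $\prod_{i=2}^{d-2}(\log n_i)^{1/p}n_i^{-1/2}$ and an absolute constant, $r_{\sf ML}(\mathcal{A},d)$; one more application at level $0$ inserts the $n_1$-factor and produces the estimate on $\frac12\mbox{diam}_{P_0}(\{x:\Lambda_1^{\mathcal{A},d}(x)\le1\}^\circ)$ asserted in (c), the accumulated absolute constants depending only on the fixed $d$. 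Combining (a)--(c) then shows $r_{\sf ML}(\mathcal{A},d)$ is approximable to within $\Omega(\prod_{i=1}^{d-2}(\log n_i)^{1/p}n_i^{-1/2})$ in deterministic polynomial time, and Proposition~\ref{prop:grothendieck} transfers this (losing only the constant $K_G$) to $v_{\sf ML}(\mathcal{A},d)$; a feasible solution to Problem~$({\sf ML})$ realizing the ratio is recovered by unwinding the reductions as in Section~\ref{sec:base}---Theorem~\ref{thm:lq-diam-approx} produces $\bar x^1$, normalized to $\|\bar x^1\|_p=1$ via the map in~(\ref{eq:qq})--(\ref{eq:cvx-q}); the algorithm is recursed on $\mathcal{A}(\bar x^1)$ to obtain $\bar x^2,\ldots,\bar x^{d-2}$; and the deterministic version of Algorithm~\ref{alg:pq-norm} applied to $\mathcal{A}(\bar x^1,\ldots,\bar x^{d-2})$ yields $\bar x^{d-1},\bar x^d$. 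Handling inexact arithmetic is routine but tedious, again following~\cite{S11a}.

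I expect the main obstacle to be part (b): controlling how the $B_2$-sandwich radii evolve through the deep, alternating sequence of polar and polytopal-diameter operations in~(\ref{eq:recursive-norm}) while keeping every bound rational and of polynomially bounded bit-length---and, entangled with this, ensuring the inductive hypothesis genuinely applies to $\mathcal{A}(x^1)$, which forces the full-column-rank reduction since that hypothesis is vacuous when the flattening of $\mathcal{A}(x^1)$ is rank-deficient.
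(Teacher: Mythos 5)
Your proposal is correct and follows essentially the same route as the paper's proof: induction on $d$ with the Section~\ref{sec:base} development as the base case, Proposition~\ref{prop1}(c) linking $\Lambda_1^{\mathcal{A},d}(x^1)$ to $\mbox{diam}_{P_1}$ of the polar of the unit ball of $\Lambda_1^{\mathcal{A}(x^1),d-1}$, well-boundedness propagated as in~\cite{S11a}, the duality identity~(\ref{eq:norm-diam}) plus Theorem~\ref{thm:lq-diam-approx} for part (c), and Proposition~\ref{prop:grothendieck} together with Algorithm~\ref{alg:pq-norm} to pass from $r_{\sf ML}$ to a feasible solution of Problem~$({\sf ML})$. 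The points you flag as delicate (well-boundedness bookkeeping and the rank-deficiency reduction) are exactly the ones the paper dispatches by citing the corresponding arguments in~\cite{S11a}.
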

\begin{proof} 
We proceed by induction on $d\ge 3$. The base case follows from (\ref{eq:norm-diam}), Proposition \ref{prop:ellip-prop} and Theorem \ref{thm:lq-diam-approx}. Now, suppose that $d>3$. Let $x_{1}\in \R^{n_{1}} \backslash \{\mathbf{0}\}$ be arbitrary, and consider the order--$(d-1)$ tensor $\mathcal{A}(x^{1})\in \R^{ n_{2}\times n_{3}\times \cdots \times n_{d}}$.  Without loss of generality, we may assume that the $(n_{3}\times \cdots \times n_{d})\times n_{2}$ matrix $A(x^{1})$, where $[A(x^{1})]_{(i_{3},\ldots,i_{d}),i_{2}}=[\mathcal{A}(x^{1})]_{i_{2}i_{3} \cdots i_{d}}$, has full column rank.  By the inductive hypothesis, $\Lambda_1^{\mathcal{A}(x^1),d-1}$ is an efficiently computable norm on $\R^{n_2}$ and the set $\left\{ x \in \R^{n_2} : \Lambda_1^{\mathcal{A}(x^1),d-1}(x) \le 1 \right\}$ is well--bounded.  Moreover, using (\ref{eq:recursive-norm}) and Proposition \ref{prop1}(c), we have
$$ \Lambda^{\mathcal{A},d}_1(x^{1}) = \mbox{diam}_{P_1} \left[ \left\{ x \in \R^{n_2}:\Lambda^{\mathcal{A},d}_2(x^{1},x)\leq 1 \right\}^{\circ} \right] = \mbox{diam}_{P_1} \left[ \left\{ x \in \R^{n_2}:\Lambda^{\mathcal{A}(x^1),d-1}_1(x)\leq 1 \right\}^{\circ} \right]. $$
Hence, by arguing as in the proof of Proposition \ref{prop:ellip-prop} and applying Theorem \ref{thm:lq-diam-approx}, we conclude that $\Lambda_1^{\mathcal{A},d}$ is an efficiently computable norm on $\R^{n_1}$.

Let $B_{\Lambda_1^{\mathcal{A},d}} = \left\{ x \in \R^{n_1} : \Lambda_1^{\mathcal{A},d}(x) \le 1 \right\}$ be the unit ball of $\Lambda_1^{\mathcal{A},d}$.  Using the argument in the proof of~\cite[Theorem 4]{S11a}, one can show that $B_{\Lambda_1^{\mathcal{A},d}}$ is well--bounded.  As a corollary, we see that $B_{\Lambda_1^{\mathcal{A},d}}^\circ$ is also well--bounded, and that the weak membership problem associated with $B_{\Lambda_1^{\mathcal{A},d}}^\circ$ can be solved in deterministic polynomial time.  This implies that $\mbox{diam}_{P_0}\left( \left\{ x \in \R^{n_1} : \Lambda_1^{\mathcal{A},d}(x) \le 1 \right\}^\circ \right)$ can be efficiently computed.

Now, the inductive hypothesis, the definition of $\Lambda_1^{\mathcal{A},d}$ in (\ref{eq:recursive-norm}) and Proposition \ref{prop1}(c) yield
\begin{eqnarray*}
\Omega\left( \prod_{i=2}^{d-2} \frac{(\log n_i)^{1/p}}{n_i^{1/2}} \right) \cdot r_{\sf ML}(\mathcal{A}(x^1),d-1) &\le& \frac{1}{2}\mbox{diam}_{P_1}\left( \left\{ x \in \R^{n_2} : \Lambda_1^{\mathcal{A}(x^1),d-1}(x) \le 1 \right\}^\circ \right) \\
\noalign{\medskip}
&=& \frac{1}{2}\Lambda_1^{\mathcal{A},d}(x^1) \\
\noalign{\medskip}
&\le& r_{\sf ML}(\mathcal{A}(x^1),d-1).
\end{eqnarray*}
Since $r_{\sf ML}(\mathcal{A},d) = \max_{\|x^1\|_p \le 1} r_{\sf ML}(\mathcal{A}(x^1),d)$, it follows that
\begin{equation} \label{eq:max-diam-1}
\Omega\left( \prod_{i=2}^{d-2} \frac{(\log n_i)^{1/p}}{n_i^{1/2}} \right) \cdot r_{\sf ML}(\mathcal{A},d) \le \frac{1}{2} \max_{\|x^1\|_p \le 1} \Lambda_1^{\mathcal{A},d}(x^1) \le r_{\sf ML}(\mathcal{A},d).
\end{equation}
By mimicking the derivation of (\ref{eq:norm-diam}), one can show that
\begin{equation} \label{eq:max-diam-2}
\max_{\|x^1\|_p \le 1} \Lambda_1^{\mathcal{A},d}(x^1) = \frac{1}{2} \mbox{diam}_q\left( B_{\Lambda_1^{\mathcal{A},d}}^\circ \right).
\end{equation}
Moreover, since $B_{\Lambda_1^{\mathcal{A},d}}^\circ$ is well--bounded, Theorem \ref{thm:lq-diam-approx} and the definition of $P_0$ imply that
\begin{equation} \label{eq:max-diam-3}
\Omega\left( \frac{(\log n_1)^{1/p}}{n_1^{1/2}} \right) \cdot \mbox{diam}_q\left( B_{\Lambda_1^{\mathcal{A},d}}^\circ \right) \le \mbox{diam}_{P_0}\left( B_{\Lambda_1^{\mathcal{A},d}}^\circ \right) \le \mbox{diam}_q\left( B_{\Lambda_1^{\mathcal{A},d}}^\circ \right).
\end{equation}
It then follows from (\ref{eq:max-diam-1}), (\ref{eq:max-diam-2}) and (\ref{eq:max-diam-3}) that
$$ \Omega\left( \prod_{i=1}^{d-2} \frac{(\log n_i)^{1/p}}{n_i^{1/2}} \right) \cdot r_{\sf ML}(\mathcal{A},d) \le \frac{1}{2} \mbox{diam}_{P_0}\left( \left\{ x \in \R^{n_1} : \Lambda_1^{\mathcal{A},d}(x) \le 1 \right\}^\circ \right) \le r_{\sf ML}(\mathcal{A},d). $$
This completes the proof of Theorem \ref{thm3}.
\end{proof}

\medskip
The following is an immediate corollary of Theorems \ref{hpml} and \ref{thm3}:
\begin{coro}
For any given $d\ge3$ and $p\in(2,\infty]$, there is a deterministic polynomial--time algorithm for $({\sf HP})$ with approximation ratio (resp.~relative approximation ratio) $\Omega\left( (\log n)^{(d-2)/p} \big/ n^{d/2-1} \right)$ when $d \ge 3$ is odd (resp.~even).
\end{coro}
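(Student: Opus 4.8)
The plan is to simply chain Theorem~\ref{thm3} with Theorem~\ref{hpml}. Given an instance $f_\mathcal{A}$ of Problem $({\sf HP})$, we first form its multilinear relaxation $({\sf MR})$, which is nothing but an instance of Problem $({\sf ML})$ with $n_1=n_2=\cdots=n_d=n$ (the associated tensor $\mathcal{A}$ is super--symmetric and cubical, but Theorem~\ref{thm3} imposes no such restriction on its input). We then run the deterministic polynomial--time algorithm guaranteed by Theorem~\ref{thm3} on this instance to obtain a feasible point whose objective value is at least $\alpha\cdot v^*$, where $v^*$ is the optimal value of $({\sf MR})$ and $\alpha=\Omega\!\left(\prod_{i=1}^{d-2}(\log n)^{1/p}/n^{1/2}\right)$. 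This algorithm thus serves as the hypothesized algorithm $\mathscr{A}_{\sf MR}$ in Theorem~\ref{hpml}, which we invoke to convert the feasible point for $({\sf MR})$ into a feasible point $\hat{x}$ for $({\sf HP})$ satisfying $f_\mathcal{A}(\hat{x})\ge\alpha\cdot d!\cdot d^{-d}\cdot v^*\ge\alpha\cdot d!\cdot d^{-d}\cdot\bar{v}$ when $d$ is odd, and $f_\mathcal{A}(\hat{x})-\underline{v}\ge\alpha\cdot d!\cdot d^{-d}\cdot(\bar{v}-\underline{v})$ when $d$ is even.

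It then remains to collect the constants. Since $\prod_{i=1}^{d-2}\frac{(\log n)^{1/p}}{n^{1/2}}=\frac{(\log n)^{(d-2)/p}}{n^{(d-2)/2}}$ and $(d-2)/2=d/2-1$, we have $\alpha=\Omega\!\left((\log n)^{(d-2)/p}/n^{d/2-1}\right)$. Moreover, because $d$ and $p$ are fixed, the factor $d!\cdot d^{-d}$ is an absolute constant that can be absorbed into the $\Omega(\cdot)$ notation. Hence $\mathscr{A}_{\sf HP}$ has approximation ratio $\Omega\!\left((\log n)^{(d-2)/p}/n^{d/2-1}\right)$ when $d\ge3$ is odd and relative approximation ratio $\Omega\!\left((\log n)^{(d-2)/p}/n^{d/2-1}\right)$ when $d\ge4$ is even, which is precisely the claim. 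Every step runs in time polynomial in the input size of $({\sf HP})$: forming $({\sf MR})$ is immediate, the algorithm of Theorem~\ref{thm3} is polynomial for fixed $d$, and the reduction of Theorem~\ref{hpml} is polynomial as well.

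The only point demanding a little care is the full--column--rank hypothesis on the flattening matrix $A$ in Theorem~\ref{thm3} (and on the matrix of~(\ref{eq:cubic-matrix}) in the base case). As noted after Proposition~\ref{prop:norm}, following~\cite[Section~3.1]{S11a}, one may reduce without loss of generality to the full--rank case by restricting to the subspace spanned by the columns of $A$, at a cost that affects neither the optimal value nor the feasibility of the returned solution; I would invoke this reduction as a black box. I would also verify that the pipeline genuinely produces a feasible \emph{solution} rather than merely an estimate of $v^*$ — this is what the hypothesis of Theorem~\ref{hpml} requires — but this is already built into Theorem~\ref{thm3} through the diameter--approximation routine of Theorem~\ref{thm:lq-diam-approx} together with Algorithm~\ref{alg:pq-norm}.

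In truth there is no substantive obstacle here: the corollary is a bookkeeping exercise once Theorems~\ref{hpml} and~\ref{thm3} are available. The single thing one must remain vigilant about is that every hidden constant in the various $\Omega(\cdot)$'s depends only on the fixed parameters $d$ and $p$ and not on $n$, so that the product over $i=1,\ldots,d-2$ of the per--level ratios, together with the $d!\,d^{-d}$ from the relaxation step, still collapses to a single $\Omega\!\left((\log n)^{(d-2)/p}/n^{d/2-1}\right)$ bound.
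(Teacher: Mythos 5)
Your proposal is correct and is exactly the route the paper takes: the corollary is stated there as an immediate consequence of Theorems~\ref{hpml} and~\ref{thm3}, obtained by running the deterministic algorithm of Theorem~\ref{thm3} on the relaxation $({\sf MR})$ (an instance of $({\sf ML})$ with $n_1=\cdots=n_d=n$) and then invoking Theorem~\ref{hpml}, with the constant $d!\,d^{-d}$ absorbed into the $\Omega(\cdot)$ since $d$ is fixed. Your added remarks on the full--column--rank reduction and on extracting an actual feasible solution are consistent with how the paper handles these points.
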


\subsubsection{Randomized Approximation of $r_{\sf ML}(\mathcal{A},d)$}

As in the case where $d=3$, we can approximate $r_{\sf ML}(\mathcal{A},d)$ using a randomized approach.  Such an approach is based on the following result, which states that every optimal solution to Problem (\ref{eq:ML-relax}) satisfies certain probabilistic inequality:
\begin{prop} \label{prop:prob-high-d}
Let 
$$ \left( \bar{x}^1,\ldots,\bar{x}^{d-2},\{\bar{u}_j\}_{j=1}^{n_{d-1}},\{\bar{v}_k\}_{k=1}^{n_d} \right) $$
be an optimal solution to Problem (\ref{eq:ML-relax}).  
\begin{enumerate}
\item[\subpb] Let $\zeta \in \R^{n_1}$ be a vector of i.i.d.~Bernoulli random variables.  Then,
$$ \Pr\left( \tilde{F}_\mathcal{A}\left( \zeta, \bar{x}^2, \ldots, \bar{x}^{d-2}, \{\bar{u}_j\}_{j=1}^{n_{d-1}},\{\bar{v}_k\}_{k=1}^{n_d} \right) \ge \sqrt{\frac{\delta_0 \log n_1}{n_1}} \cdot r_{\sf ML}(\mathcal{A},d) \right) \ge \frac{c_0}{n_1^{\delta_0}}, $$
where the constants $\delta_0,c_0$ are given by (\ref{eq:inf-norm-const}).

\item[\subpb] Let $\xi \in \R^{n_1}$ be a vector of i.i.d.~random variables with density $p\cdot\exp(-|t|^p)/(2\Gamma(1/p))$, and set $\bar{\xi} = \xi/\|\xi\|_p$.  Then, 
$$ \Pr\left( \tilde{F}_\mathcal{A}\left( \bar{\xi}, \bar{x}^2, \ldots, \bar{x}^{d-2}, \{\bar{u}_j\}_{j=1}^{n_{d-1}},\{\bar{v}_k\}_{k=1}^{n_d} \right) \ge \sqrt{\frac{\delta_1 \log n_1}{n_1}} \cdot r_{\sf ML}(\mathcal{A},d) \right) \ge \frac{c_1}{n_1^{c_2}} $$
for all $n \ge \bar{n}$, where the constants $\delta_1,c_1,c_2,\bar{n}$ are given by (\ref{eq:q-norm-const}).
\end{enumerate}
\resetspb
\end{prop}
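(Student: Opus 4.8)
The plan is to freeze every block of $\tilde F_\mathcal{A}$ except the first at its optimal value, observe that what remains is a \emph{linear} functional of $x^1\in\R^{n_1}$, and then feed the coefficient vector of this functional into Proposition \ref{prop:prob}. Concretely, starting from the optimal tuple $\left( \bar{x}^1,\ldots,\bar{x}^{d-2},\{\bar{u}_j\}_{j=1}^{n_{d-1}},\{\bar{v}_k\}_{k=1}^{n_d} \right)$ of Problem (\ref{eq:ML-relax}), I would introduce the vector $w\in\R^{n_1}$ with
$$ w_{i} = \sum_{i_2=1}^{n_2}\cdots\sum_{i_{d-2}=1}^{n_{d-2}}\sum_{j=1}^{n_{d-1}}\sum_{k=1}^{n_d} a_{i\,i_2\cdots i_{d-2}\,jk}\cdot\bar{x}_{i_2}^2\cdots\bar{x}_{i_{d-2}}^{d-2}\cdot\bar{u}_j^T\bar{v}_k \qquad (i=1,\ldots,n_1). $$
Since the inner products $\bar u_j^T\bar v_k$ are scalars once $\{\bar u_j\},\{\bar v_k\}$ are fixed, $w$ is a well--defined real vector and one has $\tilde F_\mathcal{A}\bigl( x,\bar{x}^2,\ldots,\bar{x}^{d-2},\{\bar{u}_j\}_j,\{\bar{v}_k\}_k \bigr)=w^Tx$ for every $x\in\R^{n_1}$; evaluating at $x=\bar x^1$ gives $w^T\bar x^1=r_{\sf ML}(\mathcal{A},d)$.

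The only step that requires a (short) argument is a lower bound on the norms of $w$ that enter Proposition \ref{prop:prob}. Here I would use feasibility of $\bar x^1$: from $\|\bar x^1\|_p\le1$ and $p\ge1$ we get $\|\bar x^1\|_\infty\le\|\bar x^1\|_p\le1$, so H\"{o}lder's inequality yields
$$ r_{\sf ML}(\mathcal{A},d)=w^T\bar x^1\le\|w\|_1\,\|\bar x^1\|_\infty\le\|w\|_1 \qquad\mbox{and}\qquad r_{\sf ML}(\mathcal{A},d)=w^T\bar x^1\le\|w\|_q\,\|\bar x^1\|_p\le\|w\|_q, $$
where $q=p/(p-1)$; moreover $r_{\sf ML}(\mathcal{A},d)\ge0$ since the all--zero point (with all $u_j=v_k=\bz$) is feasible for Problem (\ref{eq:ML-relax}).

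With these facts in hand the two claims follow immediately by monotonicity of the tail. For (a), note $\tilde F_\mathcal{A}\bigl(\zeta,\bar x^2,\ldots\bigr)=w^T\zeta$; applying Proposition \ref{prop:prob}(a) to $w$ (with $n=n_1$) and replacing $\|w\|_1$ by the smaller nonnegative quantity $r_{\sf ML}(\mathcal{A},d)$ inside the probability (which only enlarges the event) gives
$$ \Pr\!\left( w^T\zeta \ge \sqrt{\frac{\delta_0\log n_1}{n_1}}\cdot r_{\sf ML}(\mathcal{A},d) \right) \ge \Pr\!\left( w^T\zeta \ge \sqrt{\frac{\delta_0\log n_1}{n_1}}\cdot \|w\|_1 \right) \ge \frac{c_0}{n_1^{\delta_0}}, $$
which is the stated bound. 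For (b), where $p\in(2,\infty)$ so that $q\in(1,2)$, note $\tilde F_\mathcal{A}\bigl(\bar\xi,\bar x^2,\ldots\bigr)=w^T\bar\xi=w^T\xi/\|\xi\|_p$; applying Proposition \ref{prop:prob}(b) to $w$ and using $\|w\|_q\ge r_{\sf ML}(\mathcal{A},d)\ge0$ in exactly the same way yields $\Pr(\,\cdot\,)\ge c_1/n_1^{c_2}$ for $n_1\ge\bar n$. I do not expect a genuine obstacle here: the whole proof rests only on the linearity of $\tilde F_\mathcal{A}$ in the first block after the others are frozen and on the two elementary inequalities $\|w\|_1\ge r_{\sf ML}(\mathcal{A},d)$ and $\|w\|_q\ge r_{\sf ML}(\mathcal{A},d)$, which are precisely the inputs Proposition \ref{prop:prob} needs.
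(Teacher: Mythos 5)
Your proposal is correct and follows essentially the same route as the paper: both introduce the same coefficient vector $w$, use linearity of $\tilde{F}_\mathcal{A}$ in the first block, and invoke Proposition \ref{prop:prob}. The only (immaterial) difference is that the paper uses the optimality of $\bar{x}^1$ to get the exact identity $r_{\sf ML}(\mathcal{A},d)=\|w\|_q$, whereas you use only feasibility and H\"{o}lder to get $r_{\sf ML}(\mathcal{A},d)\le\|w\|_1$ and $r_{\sf ML}(\mathcal{A},d)\le\|w\|_q$, which suffices by monotonicity of the tail event.
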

\begin{proof}
   Let $w \in \R^{n_1}$ be the vector defined by
$$ w_{i_1} = \sum_{i_2=1}^{n_2}\cdots\sum_{i_{d-2}=1}^{n_{d-2}}\sum_{j=1}^{n_{d-1}}\sum_{k=1}^{n_d} a_{i_1 i_2 \cdots i_{d-2}jk} \cdot \bar{x}_{i_2}^2\cdots \bar{x}_{i_{d-2}}^{d-2} \cdot \bar{u}_j^T\bar{v}_k \quad\mbox{for } i_1=1,\ldots,n_1. $$
Then, for any $x \in \R^{n_1}$, we have
$$ w^Tx = \tilde{F}_\mathcal{A}\left( x, \bar{x}^2, \ldots, \bar{x}^{d-2}, \{\bar{u}_j\}_{j=1}^{n_{d-1}},\{\bar{v}_k\}_{k=1}^{n_d} \right). $$
Moreover, by the definition of $r_{\sf ML}(\mathcal{A},d)$ and H\"{o}lder's inequality, we have
\begin{eqnarray*}
   r_{\sf ML}(\mathcal{A},d) &=& \tilde{F}_\mathcal{A}\left( \bar{x}^1, \bar{x}^2, \ldots, \bar{x}^{d-2}, \{\bar{u}_j\}_{j=1}^{n_{d-1}},\{\bar{v}_k\}_{k=1}^{n_d} \right) \\
   \noalign{\medskip}
   &=& \max_{\|x^1\|_p \le 1} \tilde{F}_\mathcal{A}\left( x^1, \bar{x}^2, \ldots, \bar{x}^{d-2},\{\bar{u}_j\}_{j=1}^{n_{d-1}},\{\bar{v}_k\}_{k=1}^{n_d} \right) \\
   \noalign{\medskip}
   &=& \|w\|_q.
\end{eqnarray*}
Thus, the desired result follows from Proposition \ref{prop:prob}.
\end{proof}
\begin{thm}
For any given $d\ge3$ and $p\in(2,\infty]$, there is a randomized polynomial--time algorithm for Problem (\ref{eq:ML-relax}) that returns vectors $\hat{x}^1,\ldots,\hat{x}^{d-2},\{\hat{u}_j\}_{j=1}^{n_{d-1}},\{\hat{v}_k\}_{k=1}^{n_d}$ with the following property:
$$ \Pr\left[ \kappa^{d/2-1} \prod_{i=1}^{d-2} \sqrt{\frac{\log n_i}{n_i}} \cdot r_{\sf ML}(\mathcal{A},d) \le G(\mathcal{A},d) \le r_{\sf ML}(\mathcal{A},d) \right] \ge \frac{1}{2}. $$
Here, $G(\mathcal{A},d) = \tilde{F}_\mathcal{A}\left( \hat{x}^1,\ldots,\hat{x}^{d-2},\{\hat{u}_j\}_{j=1}^{n_{d-1}},\{\hat{v}_k\}_{k=1}^{n_d} \right)$ and
$$ \kappa = \left\{ \begin{array}{c@{\quad}l} \delta_0 & \mbox{if } p \in (2,\infty), \\
\noalign{\medskip}
\delta_1 & \mbox{if } p = \infty,
\end{array}\right.
$$
where the constants $\delta_0,\delta_1$ are given by (\ref{eq:inf-norm-const}) and (\ref{eq:q-norm-const}), respectively.
\end{thm}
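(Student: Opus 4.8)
The plan is to prove the estimate by induction on the degree $d\ge 3$, where at each step the randomized algorithm peels off one $L_p$-ball-constrained block of the multilinear form by random sampling and then recurses on the resulting order-$(d-1)$ tensor. Concretely, the degree-$d$ algorithm will draw many independent sample vectors in $\R^{n_1}$, run the degree-$(d-1)$ algorithm on the contracted tensor for each sample, and return the best tuple obtained.

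For the base case $d=3$ one has $\kappa^{d/2-1}\prod_{i=1}^{d-2}\sqrt{\log n_i/n_i}=\kappa^{1/2}\sqrt{\log n_1/n_1}$, and by (\ref{eq:pqnorm-vec}) and (\ref{eq:norm-diam}), $r_{\sf ML}(\mathcal{A},3)=\max_{\|x^1\|_p\le 1}{\sf vec}_p(\mathcal{A}(x^1))=\frac{1}{2}\,\mbox{diam}_q(B_\mathcal{A}^\circ)$. Running the algorithm of Proposition \ref{prop:rand-diam-approx} on $B_\mathcal{A}^\circ$ produces, with probability at least $\frac{1}{2}$, a vector $\hat{x}^1$ with $\|\hat{x}^1\|_p\le 1$ and $\Omega(\sqrt{\log n_1/n_1})\cdot r_{\sf ML}(\mathcal{A},3)\le\|\hat{x}^1\|_\mathcal{A}\le r_{\sf ML}(\mathcal{A},3)$; then solving the convex program (\ref{eq:pqnorm-cvx}) for the matrix $\mathcal{A}(\hat{x}^1)$ and Cholesky-factoring the optimizer yields vectors $\{\hat{u}_j\},\{\hat{v}_k\}$ with $\tilde{F}_\mathcal{A}(\hat{x}^1,\{\hat{u}_j\},\{\hat{v}_k\})={\sf vec}_p(\mathcal{A}(\hat{x}^1))=\|\hat{x}^1\|_\mathcal{A}$, so that $G(\mathcal{A},3)=\|\hat{x}^1\|_\mathcal{A}$. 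It remains only to check that the constant hidden in $\Omega(\cdot)$ is the $\kappa^{1/2}$ of the theorem, which is precisely what the constant tracking in Proposition \ref{prop:rand-diam-approx} (i.e.\ (\ref{eq:inf-norm-const})/(\ref{eq:q-norm-const})) delivers.

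For the inductive step, let $d\ge 4$ and assume the result for order-$(d-1)$ tensors, with guaranteed ratio $\rho_{d-1}=\kappa^{(d-1)/2-1}\prod_{i=1}^{d-3}\sqrt{\log n_{i+1}/n_{i+1}}$ for an order-$(d-1)$ tensor whose blocks have dimensions $n_2,\ldots,n_d$. For the purpose of the analysis only, fix an optimal solution $(\bar{x}^1,\bar{x}^2,\ldots,\bar{x}^{d-2},\{\bar{u}_j\},\{\bar{v}_k\})$ to Problem (\ref{eq:ML-relax}) for $\mathcal{A}$, which exists since the feasible set may be taken compact. The algorithm draws $M$ independent copies $\bar{\xi}^{(1)},\ldots,\bar{\xi}^{(M)}\in\R^{n_1}$ of the random vector in Proposition \ref{prop:prob-high-d} (a Bernoulli vector if $p=\infty$; $\xi/\|\xi\|_p$ with $\xi$ having i.i.d.\ coordinates of density $p\exp(-|t|^p)/(2\Gamma(1/p))$ if $p\in(2,\infty)$), runs the recursive degree-$(d-1)$ algorithm on each contracted tensor $\mathcal{A}(\bar{\xi}^{(\ell)})$ to obtain $G(\mathcal{A}(\bar{\xi}^{(\ell)}),d-1)$ together with the associated vectors, and outputs $\hat{x}^1=\bar{\xi}^{(\ell^*)}$ with the recursive vectors, where $\ell^*=\arg\max_\ell G(\mathcal{A}(\bar{\xi}^{(\ell)}),d-1)$. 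The upper bound $G(\mathcal{A},d)\le r_{\sf ML}(\mathcal{A},d)$ is immediate, since $\|\bar{\xi}^{(\ell)}\|_p\le 1$ and, by the inductive hypothesis, $G(\mathcal{A}(\bar{\xi}^{(\ell)}),d-1)\le r_{\sf ML}(\mathcal{A}(\bar{\xi}^{(\ell)}),d-1)\le r_{\sf ML}(\mathcal{A},d)$.

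The lower bound --- and the step I expect to require the most care because of the nested randomness --- goes as follows. For each $\ell$, let $\mathcal{G}_\ell$ be the event $r_{\sf ML}(\mathcal{A}(\bar{\xi}^{(\ell)}),d-1)\ge\sqrt{\kappa\log n_1/n_1}\cdot r_{\sf ML}(\mathcal{A},d)$ and let $\mathcal{H}_\ell$ be the event that the recursive call on $\mathcal{A}(\bar{\xi}^{(\ell)})$ attains its stated guarantee. Since $(\bar{x}^2,\ldots,\bar{x}^{d-2},\{\bar{u}_j\},\{\bar{v}_k\})$ is feasible for Problem (\ref{eq:ML-relax}) associated with $\mathcal{A}(\bar{\xi}^{(\ell)})$, we have $r_{\sf ML}(\mathcal{A}(\bar{\xi}^{(\ell)}),d-1)\ge\tilde{F}_\mathcal{A}(\bar{\xi}^{(\ell)},\bar{x}^2,\ldots,\bar{x}^{d-2},\{\bar{u}_j\},\{\bar{v}_k\})$, whence Proposition \ref{prop:prob-high-d} gives $\Pr[\mathcal{G}_\ell]\ge c/n_1^{c'}$ with $(c,c')$ the pair $(c_0,\delta_0)$ or $(c_1,c_2)$ from (\ref{eq:inf-norm-const})/(\ref{eq:q-norm-const}) (when $p\in(2,\infty)$ this requires $n_1\ge\bar{n}$, the finitely many smaller instances being settled by brute force). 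Conditioning on $\bar{\xi}^{(\ell)}$ and using that the recursive call uses independent coins, the inductive hypothesis yields $\Pr[\mathcal{H}_\ell\mid\bar{\xi}^{(\ell)}]\ge\frac{1}{2}$, hence $\Pr[\mathcal{G}_\ell\cap\mathcal{H}_\ell]\ge c/(2n_1^{c'})$; on $\mathcal{G}_\ell\cap\mathcal{H}_\ell$ the $\ell$-th candidate value is at least $\rho_{d-1}\cdot\sqrt{\kappa\log n_1/n_1}\cdot r_{\sf ML}(\mathcal{A},d)$, and a short computation identifies this with $\kappa^{d/2-1}\prod_{i=1}^{d-2}\sqrt{\log n_i/n_i}\cdot r_{\sf ML}(\mathcal{A},d)$. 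As the events $\mathcal{G}_\ell\cap\mathcal{H}_\ell$, $\ell=1,\ldots,M$, are independent (each copy uses fresh randomness), taking $M=\lceil(2n_1^{c'}/c)\ln 2\rceil=\mathrm{poly}(n_1)$ gives $\Pr[\bigcup_\ell(\mathcal{G}_\ell\cap\mathcal{H}_\ell)]\ge 1-(1-c/(2n_1^{c'}))^M\ge\frac{1}{2}$; so the success probability is exactly $\frac{1}{2}$ at every level, no union bound over the $d-2$ levels is needed, and since $d$ is fixed and $M$ is polynomial at each level the total running time is polynomial. (Trivial loose ends: if $\mathcal{A}(\bar{\xi}^{(\ell)})=\bz$ both sides of the target vanish; and the fixed optimal solution enters only the probability estimate, not the algorithm.)
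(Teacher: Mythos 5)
Your proposal is correct and follows essentially the same route as the paper: induction on $d$ with the base case handled by Proposition \ref{prop:rand-diam-approx}, and the inductive step done by sampling polynomially many random directions $\bar{\xi}^{(\ell)}$, recursing on the contracted tensors, and combining Proposition \ref{prop:prob-high-d} with the inductive success probability of $\tfrac12$ to get a per-sample success probability of order $n_1^{-c'}$, which $M=\mathrm{poly}(n_1)$ independent trials boost back to $\tfrac12$. Your extra bookkeeping (explicit extraction of $\{\hat{u}_j\},\{\hat{v}_k\}$ in the base case, the two-event conditioning, and the remark on small $n_1$) only makes explicit what the paper leaves implicit.
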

\begin{proof}
We shall prove the theorem only for the case where $p\in(2,\infty)$; the case where $p=\infty$ will be similar.  The proof proceeds by induction on $d\ge3$.  The base case follows from Proposition \ref{prop:rand-diam-approx}.  Now, set $M=(2\ln2)n_1^{c_2}/c_1$, where the constants $c_1,c_2$ are given by (\ref{eq:q-norm-const}).  Consider a collection $\{\xi^i_j:i=1,\ldots,M;\, j=1,\ldots,n_1\}$ of i.i.d.~random variables with density $p\cdot\exp(-|t|^p)/(2\Gamma(1/p))$.  Define
$$ \xi^i = (\xi^i_1,\ldots,\xi^i_{n_1}) \in \R^{n_1}, \quad \bar{\xi}^i = \frac{\xi^i}{\|\xi^i\|_p} \quad\mbox{for } i=1,\ldots,M. $$
By the inductive hypothesis, there is a randomized polynomial--time algorithm that can compute, for each $i=1,\ldots,M$, a number $G(\mathcal{A}(\bar{\xi}^i),d-1)$ satisfying
$$ \Pr\left[ \delta_1^{(d-3)/2} \prod_{i=2}^{d-2} \sqrt{\frac{\log n_i}{n_i}} \cdot r_{\sf ML}(\mathcal{A}(\bar{\xi}^i),d-1) \le G(\mathcal{A}(\bar{\xi}^i),d-1) \le r_{\sf ML}(\mathcal{A}(\bar{\xi}^i),d-1) \right] \ge \frac{1}{2}. $$
Now, consider the events
$$ E_i = \left\{ G(\mathcal{A}(\bar{\xi}^i),d-1) \ge \delta_1^{(d-3)/2} \prod_{i=2}^{d-2} \sqrt{\frac{\log n_i}{n_i}} \cdot r_{\sf ML}(\mathcal{A}(\bar{\xi}^i),d-1) \right\} \quad\mbox{for } i=1,\ldots,M $$
and let
$$ \left( \bar{x}^1,\ldots,\bar{x}^{d-2},\{\bar{u}_j\}_{j=1}^{n_{d-1}},\{\bar{v}_k\}_{k=1}^{n_d} \right) $$
be an optimal solution to Problem (\ref{eq:ML-relax}).  Note that $\Pr(E_i) \ge 1/2$ for $i=1,\ldots,M$.  We compute
\begin{eqnarray}
& & \Pr\left( G(\mathcal{A}(\bar{\xi}^i),d-1) \ge \delta_1^{d/2-1} \prod_{i=1}^{d-2} \sqrt{\frac{\log n_i}{n_i}} \cdot r_{\sf ML}(\mathcal{A},d) \right) \nonumber \\
\noalign{\medskip}
&\ge& \Pr\left( G(\mathcal{A}(\bar{\xi}^i),d-1) \ge \delta_1^{d/2-1} \prod_{i=1}^{d-2} \sqrt{\frac{\log n_i}{n_i}} \cdot r_{\sf ML}(\mathcal{A},d) \,\Bigg|\, E_i \right) \times \Pr(E_i) \nonumber \\
\noalign{\medskip}
&\ge& \frac{1}{2} \Pr\left( r_{\sf ML}(\mathcal{A}(\bar{\xi}^i),d-1) \ge \sqrt{\frac{\delta_1 \log n_1}{n_1}} \cdot r_{\sf ML}(\mathcal{A},d) \right) \label{eq:prob-calc-1} \\
\noalign{\medskip}
&\ge& \frac{1}{2}\Pr\left( \tilde{F}_\mathcal{A}\left( \bar{\xi}^i,\bar{x}^2,\ldots,\bar{x}^{d-2},\{\bar{u}_j\}_{j=1}^{n_{d-1}},\{\bar{v}_k\}_{k=1}^{n_d} \right) \ge \sqrt{\frac{\delta_1 \log n_1}{n_1}} \cdot r_{\sf ML}(\mathcal{A},d) \right) \nonumber \\
\noalign{\medskip}
&\ge& \frac{c_1}{2n_1^{c_2}}, \label{eq:prob-calc-2}
\end{eqnarray}
where (\ref{eq:prob-calc-1}) follows from the fact that $\bar{\xi}^i$ is independent of the randomizations used to compute $G(\mathcal{A}(\bar{\xi}^i),d-1)$, and (\ref{eq:prob-calc-2}) follows from Proposition \ref{prop:prob-high-d}(b).  Upon setting 
$$ G(\mathcal{A},d) = \max_{1\le i\le M} G(\mathcal{A}(\bar{\xi}^i),d-1), $$
we conclude that
\begin{eqnarray*}
& & \Pr\left( G(\mathcal{A},d) \ge \delta_1^{d/2-1} \prod_{i=1}^{d-2} \sqrt{\frac{\log n_i}{n_i}} \cdot r_{\sf ML}(\mathcal{A},d) \right) \\
\noalign{\medskip}
&\ge& 1 - \prod_{i=1}^M \Pr\left(  G(\mathcal{A}(\bar{\xi}^i),d-1) < \delta_1^{d/2-1} \prod_{i=1}^{d-2} \sqrt{\frac{\log n_i}{n_i}} \cdot r_{\sf ML}(\mathcal{A},d) \right) \\
\noalign{\medskip}
&\ge& 1 - \left( 1 - \frac{c_1}{2n_1^{c_2}} \right)^M \\
\noalign{\medskip}
&\ge& \frac{1}{2}.
\end{eqnarray*}
This completes the proof.
\end{proof}
\begin{coro}
For any given $d\ge3$ and $p\in(2,\infty]$, there is a randomized polynomial--time algorithm for Problem $({\sf HP})$ with approximation ratio (resp.~relative approximation ratio) $\Omega\left( (\log n / n)^{d/2-1} \right)$ when $d$ is odd (resp.~even).
\end{coro}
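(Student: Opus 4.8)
The plan is to chain the preceding theorem together with Theorem \ref{hpml}, so that the argument reduces to tracking approximation ratios through a sequence of reductions whose individual steps have already been established. Fix $d \ge 3$ and $p \in (2,\infty]$, and let $\mathcal{A} \in \R^{n^d}$ be the super--symmetric tensor defining an instance of Problem $({\sf HP})$. Its associated multilinear form $F_\mathcal{A}$ defines an instance of Problem $({\sf MR})$, which is in particular an instance of Problem $({\sf ML})$ (equivalently, of Problem (\ref{eq:ML-relax})) with $n_1 = n_2 = \cdots = n_d = n$. Invoking the perturbation argument of~\cite[Section 3.1]{S11a}, I would first arrange, without loss of generality, that the flattening $A$ of $\mathcal{A}$ in (\ref{eq:flatten}) has full column rank, so that the preceding theorem applies.

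Next I would run the preceding theorem on this instance. With probability at least $1/2$ it returns, in randomized polynomial time, vectors $\hat{x}^1,\ldots,\hat{x}^{d-2},\{\hat{u}_j\}_{j=1}^{n},\{\hat{v}_k\}_{k=1}^{n}$ feasible for Problem (\ref{eq:ML-relax}) with
$$ G(\mathcal{A},d) \;\ge\; \kappa^{d/2-1}\left(\frac{\log n}{n}\right)^{d/2-1} r_{\sf ML}(\mathcal{A},d), $$
and since $\kappa$ is a universal constant and $d$ is fixed, the prefactor is $\Omega((\log n/n)^{d/2-1})$. Feeding the ``matrix part'' $\mathcal{A}(\hat{x}^1,\ldots,\hat{x}^{d-2})$ into Algorithm \ref{alg:pq-norm} turns $\{\hat{u}_j\},\{\hat{v}_k\}$ into honest vectors $\hat{x}^{d-1},\hat{x}^d \in \R^n$ with $\|\hat{x}^{d-1}\|_p,\|\hat{x}^d\|_p \le 1$ and $F_\mathcal{A}(\hat{x}^1,\ldots,\hat{x}^d) \ge K_G^{-1}\,{\sf vec}_p(\mathcal{A}(\hat{x}^1,\ldots,\hat{x}^{d-2})) \ge K_G^{-1} G(\mathcal{A},d)$, exactly as in the derivation preceding Theorem \ref{thm:trilinear}. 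Using $r_{\sf ML}(\mathcal{A},d) \ge v_{\sf ML}(\mathcal{A},d) = v^*$ (see the inequalities preceding (\ref{eq:ML-relax}) and the definition of $v^*$), this yields, with probability at least $1/2$, a feasible solution to Problem $({\sf MR})$ of objective value at least $\alpha v^*$ with $\alpha = \Omega((\log n/n)^{d/2-1})$ --- i.e., a randomized polynomial--time algorithm $\mathscr{A}_{\sf MR}$ of exactly the kind hypothesized in Theorem \ref{hpml}, except that its guarantee holds with probability $1/2$ rather than surely.

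Finally I would feed this $\mathscr{A}_{\sf MR}$ into Theorem \ref{hpml}. The construction of $\mathscr{A}_{\sf HP}$ there is a deterministic polynomial--time post--processing of the output of $\mathscr{A}_{\sf MR}$, so the resulting $\mathscr{A}_{\sf HP}$ is a randomized polynomial--time algorithm whose conclusion holds with probability at least $1/2$: for odd $d$ it outputs $\hat{x}$ with $\|\hat{x}\|_p \le 1$ and $f_\mathcal{A}(\hat{x}) \ge \alpha\,d!\,d^{-d}\,\bar{v}$, while for even $d$ it outputs $\hat{x}$ with $f_\mathcal{A}(\hat{x}) - \underline{v} \ge \alpha\,d!\,d^{-d}\,(\bar{v} - \underline{v})$. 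Since $d$ is fixed we have $d!\,d^{-d} = \Omega(1)$, so the (relative) approximation ratio is $\alpha\,d!\,d^{-d} = \Omega((\log n/n)^{d/2-1})$, which is the claim.

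The whole argument is essentially bookkeeping with constants depending only on $d$; the one point I would double--check is that the $1/2$ success probability of the base algorithm survives the deterministic reductions of Algorithm \ref{alg:pq-norm} and Theorem \ref{hpml} (it does, as neither introduces new randomness), and if a higher success probability is desired one simply repeats the base algorithm $O(\log n)$ times independently and keeps the best solution.
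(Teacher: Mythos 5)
Your argument is correct and is exactly the chain the paper intends (the corollary is stated as an immediate consequence of the randomized theorem for $r_{\sf ML}(\mathcal{A},d)$, the sandwich $K_G^{-1}r_{\sf ML}\le v_{\sf ML}\le r_{\sf ML}$ with Grothendieck rounding via Algorithm~\ref{alg:pq-norm} to recover genuine $L_p$--feasible vectors $\hat{x}^{d-1},\hat{x}^d$, and Theorem~\ref{hpml}); your bookkeeping of the constant $K_G$, the factor $d!\,d^{-d}$, and the survival of the $1/2$ success probability through the deterministic post--processing is all as the paper implicitly requires.
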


\section{Conclusion} \label{sec:concl}
In this paper, we studied the hardness and approximability of homogeneous polynomial optimization and related multilinear optimization problems with $L_p$--ball constraints.  A crucial first step in our proofs is to relate the polynomial optimization problem at hand to a suitable multilinear optimization problem.  To obtain approximation results, we further showed that the $L_p$--ball constrained  multilinear optimization problem is equivalent, from an approximation perspective, to that of determining the diameters of certain convex bodies.  Such equivalence was established using the Grothendieck inequality (see, e.g.,~\cite{KN12,P12}) and an argument of Khot and Naor~\cite{KN08} (cf.~\cite{S11a}).  Consequently, by extending the approaches in~\cite{KN08,S11a} and applying results from algorithmic convex geometry, we were able to develop both deterministic and randomized polynomial--time approximation algorithms for various $L_p$--ball constrained polynomial optimization problems, whose approximation guarantees are currently the best known in the literature.  We believe that the wide array of tools used in this paper will have further applications in the study of polynomial optimization problems.  In addition, it would be interesting to find more applications of the optimization models studied in this paper.

\section*{Appendix}
\appendix

\section{Proof of Theorem \ref{hpml}} \label{app:hpml}
The proof of Theorem \ref{hpml} relies on the following polarization formula, whose proof can be found, e.g., in~\cite[Lemma 3.5]{HLZ10}:
\begin{prop}\label{polar}
Let $x^{1}, x^{2},\ldots, x^{d}\in \mathbb{R}^{n}$ be arbitrary, and let $\xi_{1}, \xi_{2},\ldots, \xi_{d}$ be i.i.d.~Bernoulli random variables (i.e., $\Pr(\xi_{i}=1)=\Pr(\xi_{i}=-1)=1/2$ for $i=1,\ldots,d$). Then, we have
\begin{equation} \label{eq:polar}
   \mathbb{E}\left[\left(\prod_{i=1}^{d}\xi_{i}\right)f_{\mathcal{A}}\left(\sum_{j=1}^{d}\xi_{j}x^{j}\right)\right] = d! \cdot F_{\mathcal{A}}( x^{1}, x^{2},\ldots, x^{d} ). 
\end{equation}
\end{prop}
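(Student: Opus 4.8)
The plan is to establish the identity by a direct expansion, using only the multilinearity of $F_\mathcal{A}$, the independence and $\pm1$--valuedness of the $\xi_i$, and the super--symmetry of $\mathcal{A}$.

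First I would substitute $x = \sum_{j=1}^d \xi_j x^j$ into $f_\mathcal{A}(x) = F_\mathcal{A}(x,x,\ldots,x)$ and expand each of the $d$ slots of $F_\mathcal{A}$ by multilinearity. This yields
$$ f_\mathcal{A}\left( \sum_{j=1}^d \xi_j x^j \right) = \sum_{j_1=1}^d \cdots \sum_{j_d=1}^d \xi_{j_1}\cdots\xi_{j_d}\, F_\mathcal{A}(x^{j_1},\ldots,x^{j_d}). $$
Multiplying through by $\prod_{i=1}^d \xi_i$ and moving the expectation inside the finite sum by linearity reduces the task to computing $\mathbb{E}\big[ \xi_1\cdots\xi_d\,\xi_{j_1}\cdots\xi_{j_d} \big]$ for each index tuple $(j_1,\ldots,j_d) \in \{1,\ldots,d\}^d$.

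Next I would evaluate those expectations. Since the $\xi_i$ are i.i.d.\ with $\xi_i^2 = 1$ and $\mathbb{E}[\xi_i] = 0$, we have $\mathbb{E}\big[\prod_{i=1}^d \xi_i^{m_i}\big] = 1$ when every exponent $m_i$ is even and $0$ otherwise. In the monomial $\xi_1\cdots\xi_d\,\xi_{j_1}\cdots\xi_{j_d}$, the exponent of $\xi_\ell$ is $1 + \big|\{k : j_k = \ell\}\big|$, so the expectation is nonzero precisely when $\big|\{k : j_k = \ell\}\big|$ is odd — hence at least $1$ — for every $\ell \in \{1,\ldots,d\}$. As these $d$ cardinalities are nonnegative and sum to $d$, they must each equal exactly $1$; equivalently $(j_1,\ldots,j_d)$ is a permutation of $(1,\ldots,d)$, and in that case the expectation equals $1$. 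This counting step — isolating the $d!$ permutation tuples among the $d^d$ index tuples — is the only point requiring a moment's thought; everything else is bookkeeping.

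Finally I would assemble the pieces: the surviving terms give
$$ \mathbb{E}\left[ \left( \prod_{i=1}^d \xi_i \right) f_\mathcal{A}\left( \sum_{j=1}^d \xi_j x^j \right) \right] = \sum_{\pi \in S_d} F_\mathcal{A}\big( x^{\pi(1)},\ldots,x^{\pi(d)} \big), $$
and since $\mathcal{A}$ is super--symmetric, $F_\mathcal{A}$ is invariant under permutations of its arguments, so each summand equals $F_\mathcal{A}(x^1,\ldots,x^d)$ and the sum is $d!\cdot F_\mathcal{A}(x^1,\ldots,x^d)$, as desired. (An alternative route is induction on $d$, peeling off one $\xi$ at a time, but the direct expansion above is cleaner and self--contained.)
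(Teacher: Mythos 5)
Your proof is correct and complete. Note that the paper does not prove Proposition \ref{polar} at all---it simply cites \cite[Lemma 3.5]{HLZ10}---and your argument (expand $f_\mathcal{A}\left(\sum_j \xi_j x^j\right)$ by multilinearity, use independence and $\xi_\ell^2=1$ to see that $\mathbb{E}\left[\xi_1\cdots\xi_d\,\xi_{j_1}\cdots\xi_{j_d}\right]$ survives exactly when each index appears an odd number of times, and conclude by the counting argument that only the $d!$ permutation tuples contribute) is precisely the standard proof of that lemma, so in effect you have supplied the omitted details rather than taken a different route. You also correctly flag the one point that is easy to gloss over, namely that super--symmetry of $\mathcal{A}$ (which is in force here, since $f_\mathcal{A}$ is only defined for super--symmetric tensors) is what lets you replace each permuted term $F_\mathcal{A}\left(x^{\pi(1)},\ldots,x^{\pi(d)}\right)$ by $F_\mathcal{A}\left(x^1,\ldots,x^d\right)$ to get the factor $d!$.
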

Armed with Proposition \ref{polar}, we proceed as follows.  Let $(x^{1},x^{2},\ldots,x^{d})$ be the feasible solution to Problem $({\sf MR})$ returned by $\mathscr{A}_{\sf MR}$. By assumption, we have $\|x^i\|_p \le 1$ for $i=1,\ldots,d$ and $F_\mathcal{A}\left( x^1,x^2,\ldots,x^d \right) \ge \alpha v^*$.  When $d\ge3$ is odd, we can rewrite (\ref{eq:polar}) as 
$$ d! \cdot F_{\mathcal{A}}(x^{1}, x^{2},\ldots,x^{d}) = \mathbb{E}\left[\left(\prod_{i=1}^{d}\xi_{i}\right)f_{\mathcal{A}}\left(\sum_{j=1}^{d}\xi_{j}x^{j}\right)\right]=\mathbb{E}\left[f_{\mathcal{A}}\left(\sum_{j=1}^{d}\left(\prod_{i\neq j}\xi_{i}\right)x^{j}\right)\right]. $$
In particular, since $d\ge3$ is assumed to be fixed, we can find in constant time a vector $\beta=(\beta_{1},\beta_{2},\ldots,\beta_{d}) \in \{-1,1\}^d$ that satisfies
$$ f_{\mathcal{A}}\left(\sum_{j=1}^{d}\left(\prod_{i\neq j}\beta_{i}\right)x^{j}\right)\geq d! \cdot F_{\mathcal{A}}(x^{1}, x^{2},\ldots,x^{d}). $$
Now, set $\hat{x}=\sum_{j=1}^{d}\left(\prod_{i\neq j}\beta_{i}\right)x^{j} \Big/ \left\| \sum_{j=1}^{d}\left(\prod_{i\neq j}\beta_{i}\right)x^{j} \right\|_{p}$.  Then, we have $\|\hat{x}\|_p=1$; i.e., it is feasible for Problem $({\sf HP})$.  Moreover, since
$$ \left\| \sum_{j=1}^{d}\left(\prod_{i\neq j}\beta_{i}\right)x^{j} \right\|_p \le \sum_{j=1}^d \| x^j\|_p \le d, $$
we conclude that
$$ f_{\mathcal{A}}(\hat{x})\geq \frac{d!F_{\mathcal{A}}\left(x^{1}, x^{2},\ldots,x^{d}\right)}{\left\| \sum_{j=1}^{d}\left(\prod_{i\neq j}\beta_{i}\right)x^{j} \right\|_{p}^d} \geq \alpha \cdot d! \cdot d^{-d}\cdot v^* \geq \alpha\cdot d!\cdot d^{-d}\cdot \bar{v}, $$
as required.

Next, consider the case when $d\ge4$ is even.  Observe that every realization of the random vector $\xi=(\xi_1,\xi_2,\ldots,\xi_d) \in \{-1,1\}^d$ satisfies
$$ \left\| \frac{1}{d}\sum_{j=1}^{d}\xi_{j}x^{j} \right\|_p \le \frac{1}{d}\sum_{j=1}^d \| x^j \|_p \le 1; $$
i.e., $\frac{1}{d}\sum_{j=1}^{d}\xi_{j}x^{j}$ is feasible for Problem $({\sf HP})$.  Now, using the identity (\ref{eq:polar}), we compute
\begin{eqnarray*}
d! \cdot F_{\mathcal{A}}(x^{1}, x^{2},\ldots,x^{d}) &=& \mathbb{E}\left[\left(\prod_{i=1}^{d}\xi_{i}\right)f_{\mathcal{A}}\left(\sum_{j=1}^{d}\xi_{j}x^{j}\right)\right]\\
\noalign{\medskip}
&=& \frac{d^{d}}{2}\mathbb{E}\left[ f_{\mathcal{A}}\left(\frac{1}{d}\sum_{j=1}^{d}\xi_{j}x^{j}\right)-\underline{v} \;\Bigg \vert \prod_{i=1}^{d}\xi_{i}=1 \right] \\
\noalign{\medskip}
&\quad-& \frac{d^{d}}{2}\mathbb{E}\left[f_{\mathcal{A}}\left(\frac{1}{d}\sum_{j=1}^{d}\xi_{j}x^{j}\right)-\underline{v} \;\Bigg \vert \prod_{i=1}^{d}\xi_{i}=-1\right] \\
\noalign{\medskip}
&\leq& \frac{d^{d}}{2}\mathbb{E}\left[f_{\mathcal{A}}\left(\frac{1}{d}\sum_{j=1}^{d}\xi_{j}x^{j}\right)-\underline{v} \;\Bigg \vert\prod_{i=1}^{d}\xi_{i}=1\right],
\end{eqnarray*}
where the last inequality follows from the fact that $f_{\mathcal{A}}\left(\frac{1}{d}\sum_{j=1}^{d}\xi_{j}x^{j}\right)-\underline{v}$ is always non--negative.  In particular, we can find in constant time a vector $\beta=(\beta_{1},\beta_{2},\ldots,\beta_{d}) \in \{-1,1\}^d$ that satisfies $\prod_{i=1}^{d}\beta_{i}=1$ and
$$ f_\mathcal{A}\left(\frac{1}{d}\sum_{j=1}^{d}\beta_{j}x^{j}\right)-\underline{v} \ge \frac{2d!}{d^d} \cdot F_\mathcal{A}( x^1,x^2, \ldots,x^d ). $$
Upon setting $\hat{x}=\frac{1}{d}\sum_{j=1}^{d}\beta_{j}x^{j}$ and observing that $v^* \ge \bar{v} \ge \underline{v} \ge -v^*$, we obtain
$$ f_{\mathcal{A}}(\hat{x}) -\underline{v} \geq 2\alpha \cdot d! \cdot d^{-d} \cdot v^* \geq \alpha\cdot d! \cdot d^{-d} \cdot (\bar{v}-\underline{v}). $$
Moreover, we have $\|\hat{x}\|_p\le1$.  This completes the proof of Theorem \ref{hpml}. \endproof

\section{Proof of Theorem \ref{thm:MR-NPh}} \label{app:MR-NPh}
Let $d\ge3$ and $p\in[2,\infty]$ be fixed.  We shall reduce Problem $({\sf ML})$ to Problem $({\sf MR})$, again by using the symmetrization procedure introduced in Section \ref{sec:prelim}.  Towards that end, let us first establish some preparatory results.
\begin{prop} \label{prop:ml-sym-ml}
Let $\mathcal{A} \in \R^{n_1\times n_2\times \cdots \times n_d}$ be an arbitrary order--$d$ tensor and $\mbox{sym}(\mathcal{A}) \in \R^{N^d}$ be its symmetrization, where $N=n_1+n_2+\cdots+n_d$.  Moreover, let $z^i = \left[ \, (z^{i,1})^T \, (z^{i,2})^T \, \cdots \, (z^{i,d})^T \, \right]^T \in \R^N$ be given, where $z^{i,j} \in \R^{n_j}$ for $i,j=1,\ldots,d$.  Then,
$$ F_{\text{sym}(\mathcal{A})}(z^1,z^2,\ldots,z^d) = \sum_{(\pi_1,\pi_2,\ldots,\pi_d)\in S_d} F_\mathcal{A}(z^{\pi_1,1},z^{\pi_2,2},\ldots,z^{\pi_d,d}), $$
where $S_d$ is the set of permutations of $\{1,2,\ldots,d\}$.
\end{prop}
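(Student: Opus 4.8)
The plan is to expand $F_{\text{sym}(\mathcal{A})}$ straight from the definition of a multilinear form and then regroup the summation according to the block structure of $\mbox{sym}(\mathcal{A})$. Write $\sigma_j=\sum_{m=1}^{j-1}n_m$, so that $B_j=\{\sigma_j+1,\ldots,\sigma_j+n_j\}$; every index $I\in\{1,\ldots,N\}$ lies in exactly one block, and for $I\in B_\chi$ we have $I=\sigma_\chi+\iota$ with $\iota\in\{1,\ldots,n_\chi\}$, while the defining form of $z^k$ gives $z^k_I=z^{k,\chi}_\iota$. Grouping the $d$-fold sum over $(I_1,\ldots,I_d)\in\{1,\ldots,N\}^d$ by which blocks $B_{\chi_1},\ldots,B_{\chi_d}$ the indices fall into, and invoking the definition of $\mbox{sym}(\mathcal{A})$ --- whose $(\chi_1,\ldots,\chi_d)$-block is $\bz$ unless $\chi=(\chi_1,\ldots,\chi_d)$ is a permutation of $\{1,\ldots,d\}$, in which case it equals $\mathcal{A}^\chi$ --- one obtains
\begin{align*}
F_{\text{sym}(\mathcal{A})}(z^1,\ldots,z^d)
&=\sum_{\chi\in S_d}\ \sum_{\iota_1=1}^{n_{\chi_1}}\cdots\sum_{\iota_d=1}^{n_{\chi_d}}[\mathcal{A}^\chi]_{\iota_1\cdots\iota_d}\,z^{1,\chi_1}_{\iota_1}\cdots z^{d,\chi_d}_{\iota_d}\\
&=\sum_{\chi\in S_d}F_{\mathcal{A}^\chi}\bigl(z^{1,\chi_1},z^{2,\chi_2},\ldots,z^{d,\chi_d}\bigr).
\end{align*}

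The remaining step is to rewrite each form $F_{\mathcal{A}^\chi}$ in terms of $F_\mathcal{A}$ with permuted arguments. For this I would first record the elementary identity that, for any $\chi\in S_d$ and vectors $w^k\in\R^{n_{\chi_k}}$ ($k=1,\ldots,d$),
\begin{equation*}
F_{\mathcal{A}^\chi}(w^1,\ldots,w^d)=F_\mathcal{A}\bigl(w^{\chi^{-1}(1)},w^{\chi^{-1}(2)},\ldots,w^{\chi^{-1}(d)}\bigr),
\end{equation*}
which follows from the relation $[\mathcal{A}^\chi]_{i_1\cdots i_d}=[\mathcal{A}]_{i_{\chi^{-1}(1)}\cdots i_{\chi^{-1}(d)}}$ (a direct consequence of the definition of the $\pi$-transpose) together with the relabeling $j_m=i_{\chi^{-1}(m)}$ of the summation indices. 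Applying this identity with $w^k=z^{k,\chi_k}$ and using $\chi_{\chi^{-1}(m)}=m$ turns the $\chi$-term above into $F_\mathcal{A}\bigl(z^{\chi^{-1}(1),1},z^{\chi^{-1}(2),2},\ldots,z^{\chi^{-1}(d),d}\bigr)$; summing over all $\chi\in S_d$ and reindexing by the bijection $\pi=\chi^{-1}$ of $S_d$ then yields exactly $\sum_{(\pi_1,\ldots,\pi_d)\in S_d}F_\mathcal{A}(z^{\pi_1,1},\ldots,z^{\pi_d,d})$, as claimed.

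The only place where genuine care is needed is the bookkeeping around the transpose identity: one must keep straight that transposing the tensor by $\chi$ permutes the \emph{argument slots} by $\chi^{-1}$, and that the local index $\iota_k$ within block $B_{\chi_k}$ pairs with the sub-vector $z^{k,\chi_k}$ of $z^k$ (not with $z^{k,k}$). Everything else is a routine interchange of finite sums. As a sanity check, for $d=2$ the formula reduces to $F_{\text{sym}(\mathcal{A})}(z^1,z^2)=(z^{1,1})^T\mathcal{A}\,z^{2,2}+(z^{2,1})^T\mathcal{A}\,z^{1,2}$, which is immediate from $\mbox{sym}(\mathcal{A})=\left[\begin{array}{cc}\bz&\mathcal{A}\\\mathcal{A}^T&\bz\end{array}\right]$ and matches the sum over $S_2$.
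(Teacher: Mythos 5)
Your proposal is correct and follows essentially the same route as the paper's proof: expand $F_{\text{sym}(\mathcal{A})}$ blockwise so that only permutation blocks $\mathcal{A}^\chi$ survive, convert each $F_{\mathcal{A}^\chi}$ back to $F_\mathcal{A}$ with argument slots permuted by $\chi^{-1}$ (the paper does this inline via the index relabeling, you state it as a separate identity), and finish by reindexing the sum over $S_d$ by $\pi=\chi^{-1}$. The bookkeeping of the transpose identity and the pairing of $\iota_k$ with $z^{k,\chi_k}$ is handled correctly, so no gap.
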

\begin{proof}
Using the sets $B_1,\ldots,B_d$ defined in (\ref{eq:partition}) and the definition of $\mbox{sym}(\mathcal{A})$, we have
\begin{eqnarray*}
F_{\text{sym}(\mathcal{A})}(z^1,\ldots,z^d) &=& \sum_{\pi=(\pi_1,\ldots,\pi_d) \in S_d} \sum_{{i_j \in B_{\pi_j}} \atop {j=1,\ldots,d}} \left[ \mbox{sym}(\mathcal{A}) \right]_{i_1\cdots i_d} z^1_{i_1}\cdots z^d_{i_d} \\
\noalign{\medskip}
&=& \sum_{\pi=(\pi_1,\ldots,\pi_d) \in S_d} \sum_{i_1=1}^{n_{\pi_1}} \cdots \sum_{i_d=1}^{n_{\pi_d}} \left[ \mathcal{A}^\pi \right]_{i_1\cdots i_d} z^{1,\pi_1}_{i_1}\cdots z^{d,\pi_d}_{i_d} \\
\noalign{\medskip}
&=& \sum_{\pi=(\pi_1,\ldots,\pi_d) \in S_d} \sum_{i_1=1}^{n_1} \cdots \sum_{i_d=1}^{n_d} \left[ \mathcal{A} \right]_{i_1\cdots i_d} z^{\pi_1^{-1},1}_{i_1}\cdots z^{\pi_d^{-1},d}_{i_d},
\end{eqnarray*}
where $\pi^{-1}=(\pi_1^{-1},\ldots,\pi_d^{-1}) \in S_d$ is the inverse of $\pi$; i.e., $\pi_{\pi_j^{-1}}=j$ for $j=1,\ldots,d$.  Consequently, we obtain
$$
F_{\text{sym}(\mathcal{A})}(z^1,\ldots,z^d) = \sum_{(\pi_1,\ldots,\pi_d) \in S_d} F_\mathcal{A}(z^{\pi_1^{-1},1},\ldots,z^{\pi_d^{-1},d}) = \sum_{(\pi_1,\ldots,\pi_d) \in S_d} F_\mathcal{A}(z^{\pi_1,1},\ldots,z^{\pi_d,d}),
$$
as desired.
\end{proof}
\begin{prop} \label{prop:aux-max}
Let $p\in[2,\infty)$ be fixed.  Given an integer $n \in [2,d]$, define the function $f_n:[0,d]^n \limto \R$ by
$$ f_n(x_1,\ldots,x_n) = \sum_{i=1}^n x_i^{1/p} \prod_{j\not=i} (d-x_j)^{1/p}. $$
Then, for any $(x_1,\ldots,x_n) \in [0,d]^n$, we have
\begin{equation} \label{eq:aux-max-opt}
f_n(x_1,\ldots,x_n) \le f_n\left( \frac{d}{n},\ldots,\frac{d}{n} \right) = d^{n/p} \cdot n^{1-1/p} \cdot \left(1-\frac{1}{n}\right)^{(n-1)/p}.
\end{equation}
\end{prop}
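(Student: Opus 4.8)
The plan is to show that $f_n$, being continuous on the compact set $[0,d]^n$, attains its maximum at the balanced point $(d/n,\ldots,d/n)$, and then to evaluate $f_n$ there. The key step is a \emph{pairwise balancing lemma}: for any two indices (say $1$ and $2$), any fixed values of $x_3,\ldots,x_n$, and any fixed value of $x_1+x_2$, the restriction of $f_n$ to this set is maximized when $x_1=x_2$.

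To prove the lemma, split the defining sum of $f_n$ according to whether the summation index lies in $\{1,2\}$, obtaining
$$ f_n(x_1,\ldots,x_n)=C\bigl[x_1^{1/p}(d-x_2)^{1/p}+x_2^{1/p}(d-x_1)^{1/p}\bigr]+D\bigl[(d-x_1)(d-x_2)\bigr]^{1/p}, $$
where $C=\prod_{\ell\ge3}(d-x_\ell)^{1/p}\ge0$ and $D=\sum_{i\ge3}x_i^{1/p}\prod_{\ell\ge3,\ell\ne i}(d-x_\ell)^{1/p}\ge0$ depend only on $x_3,\ldots,x_n$. Since $C,D\ge0$, it suffices to show that each bracketed expression, viewed as a function of $(x_1,x_2)$ with $x_1+x_2$ fixed, is maximized at $x_1=x_2$. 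For $[(d-x_1)(d-x_2)]^{1/p}$ this is immediate from AM--GM. For $g(x_1,x_2):=x_1^{1/p}(d-x_2)^{1/p}+x_2^{1/p}(d-x_1)^{1/p}$, set $m=(x_1+x_2)/2$, $t=(x_1-x_2)/2$, $a=t/m$, $b=t/(d-m)$; then $x_1(d-x_2)=m(d-m)(1+a)(1+b)$ and $x_2(d-x_1)=m(d-m)(1-a)(1-b)$, while the constraint $x_1,x_2\in[0,d]$ forces $|a|,|b|\le1$ (by symmetry we may take $t\ge0$, so $a,b\in[0,1]$; the cases $m\in\{0,d\}$ are trivial, as the slice is then a single point). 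The claim thus reduces to the scalar inequality
$$ \bigl((1+a)(1+b)\bigr)^{1/p}+\bigl((1-a)(1-b)\bigr)^{1/p}\le2\qquad(a,b\in[0,1]), $$
which follows by first using AM--GM to get $(1\pm a)(1\pm b)\le(1\pm c)^2$ with $c=(a+b)/2\in[0,1]$, and then using that $z\mapsto z^{2/p}$ is concave on $[0,\infty)$ (this is exactly where $p\ge2$ enters) to conclude $(1+c)^{2/p}+(1-c)^{2/p}\le2\cdot1^{2/p}=2$.

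Given the balancing lemma, the maximum of $f_n$ is attained at a point with all coordinates equal: among all maximizers choose one, $x^\star$, minimizing the spread $\Psi(x)=\sum_i(x_i-\bar x)^2$, where $\bar x=\frac{1}{n}\sum_i x_i$ (possible by compactness); if $\Psi(x^\star)>0$ there are indices $i\ne j$ with $x_i^\star\ne x_j^\star$, and replacing both coordinates by their average yields (by the lemma) another maximizer whose spread is strictly smaller by $\frac{1}{2}(x_i^\star-x_j^\star)^2$, a contradiction. It then remains to maximize the one-variable function $t\mapsto f_n(t,\ldots,t)=n\,t^{1/p}(d-t)^{(n-1)/p}$ over $t\in[0,d]$; it vanishes at both endpoints and is log-concave on $(0,d)$, so its maximum is at the unique critical point $t=d/n$. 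Substituting $t=d/n$ and simplifying gives $n(d/n)^{1/p}\bigl(d(n-1)/n\bigr)^{(n-1)/p}=d^{n/p}\,n^{1-1/p}\,(1-1/n)^{(n-1)/p}$, which is precisely the right-hand side of (\ref{eq:aux-max-opt}).

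The only nontrivial step is the balancing lemma, and within it the displayed scalar inequality; the remaining manipulations are routine. The hypothesis $p\ge2$ is used decisively just once, namely to make $z\mapsto z^{2/p}$ concave — and indeed (\ref{eq:aux-max-opt}) is false when $p<2$.
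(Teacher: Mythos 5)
Your proof is correct, and it takes a genuinely different route from the paper's. The paper argues by induction on $n$: it writes down the first--order conditions of the constrained problem, shows through a fairly delicate case analysis (splitting indices according to whether $u_i=x_i/(d-x_i)$ lies below or above $p-1$, and ruling out the unbalanced configurations via monotonicity of $h(p)=(n-1)^p\left[1-(n-1)^{1/(1-p)}\right]$) that the only interior critical point is $(d/n,\ldots,d/n)$, and then handles boundary points by invoking the inductive hypothesis together with explicit bounds such as $f_n(d/n,\ldots,d/n)\ge d^{n/p}\sqrt{n/e}$. You instead symmetrize: your pairwise balancing lemma, reduced by the substitution $a=t/m$, $b=t/(d-m)$ to the scalar inequality $\bigl((1+a)(1+b)\bigr)^{1/p}+\bigl((1-a)(1-b)\bigr)^{1/p}\le 2$, is settled by AM--GM plus concavity of $z\mapsto z^{2/p}$; compactness and the spread--minimization trick then force a maximizer with all coordinates equal, and what remains is a one--variable log--concavity computation. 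I checked the details (the decomposition with $C,D\ge0$, the constraint $a,b\in[0,1]$, the spread decrease by $\tfrac12(x_i^\star-x_j^\star)^2$, and the evaluation at $t=d/n$) and they are sound. Your route is shorter, avoids both the induction and the critical--point case analysis, and isolates the single place where $p\ge2$ enters. One thing the paper's heavier computation buys is information about the equality case: its argument shows that for $p>2$ the maximizer is unique, a fact the paper later cites in the proof of Proposition \ref{prop:ml-symml-eqv} to force $k_1=\cdots=k_d=1$. Your argument proves exactly the stated inequality; recovering that equality discussion would require tracking strictness in your AM--GM and concavity steps (both strict for $p>2$ unless $x_i=x_j$, modulo the degenerate case $C=D=0$), a minor addendum worth adding if the proposition is to be reused the way the paper uses it.
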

\begin{proof}
We prove (\ref{eq:aux-max-opt}) by induction on $n$.  For the base case (i.e., $n=2$), consider the problem:
$$ (P_2) \qquad \max\{ f_2(x_1,x_2) : 0 \le x_i \le d \mbox{ for } i=1,2 \}. $$
Note that an optimal solution to ($P_2$) must either lie on the boundary of $[0,d]^2$, or lie in the interior of $[0,d]^2$ and be a solution to the following first--order necessary conditions:
\begin{eqnarray}
x_1^{(1/p)-1}(d-x_2)^{1/p} &=& x_2^{1/p}(d-x_1)^{(1/p)-1}, \label{eq:p2-1} \\
\noalign{\medskip}
x_2^{(1/p)-1}(d-x_1)^{1/p} &=& x_1^{1/p}(d-x_2)^{(1/p)-1}. \label{eq:p2-2}
\end{eqnarray}
Consider an arbitrary $\bar{x}=(\bar{x}_1,\bar{x}_2) \in [0,d]^2$.  If $\bar{x}$ is a boundary point of $[0,d]^2$, then the structure of $f_2$ implies that
$$ f_2(\bar{x}_1,\bar{x}_2) \le \max\{ f_2(0,d),f_2(d,0) \} = d^{2/p}. $$
On the other hand, suppose that $\bar{x} \in (0,d)^2$ satisfies (\ref{eq:p2-1}) and (\ref{eq:p2-2}).  Then,
\begin{equation} \label{eq:p2-3}
\left( \frac{\bar{x}_1}{d-\bar{x}_1} \right)^{(1/p)-1} = \left( \frac{\bar{x}_2}{d-\bar{x}_2} \right)^{1/p} \quad\mbox{and}\quad \left( \frac{\bar{x}_2}{d-\bar{x}_2} \right)^{(1/p)-1} = \left( \frac{\bar{x}_1}{d-\bar{x}_1} \right)^{1/p}, 
\end{equation}
which together yield $\bar{x}_1\bar{x}_2 = (d-\bar{x}_1)(d-\bar{x}_2)$, or equivalently, $\bar{x}_1+\bar{x}_2=d$.  If $p=2$, then any $(x_1,x_2) \in [0,d]^2$ satisfying $x_1+x_2=d$ will be an optimal solution to ($P_2$).  In particular, we have $f_2(x_1,x_2) \le f_2(d/2,d/2) = d$ for all $(x_1,x_2) \in [0,d]^2$ in this case.  If $p>2$, then upon substituting $\bar{x}_1+\bar{x}_2=d$ into (\ref{eq:p2-3}), we obtain a unique solution $\bar{x}_1=\bar{x}_2=d/2$.  Since $f_2(\bar{x}_1,\bar{x}_2) = 2(d/2)^{2/p} > d^{2/p}$ for any $p>2$, we conclude that $(d/2,d/2)$ is the optimal solution to ($P_2$).  This establishes the base case.

For the inductive step, consider the problem
$$ (P_n) \qquad \max\{ f_n(x_1,\ldots,x_n) : 0 \le x_i \le d \mbox{ for } i=1,\ldots,n \}, $$
where $2<n\le d$.  Again, an optimal solution to ($P_n$) must either lie on the boundary of $[0,d]^n$, or lie in the interior of $[0,d]^n$ and be a solution to the following first--order necessary conditions:
\begin{equation} \label{eq:pn-1}
   \left( \frac{x_i}{d-x_i} \right)^{(1/p)-1} = \sum_{j\not=i} \left( \frac{x_j}{d-x_j} \right)^{1/p} \quad\mbox{for } i=1,\ldots,n. 
\end{equation}
Consider an arbitrary $\bar{x}=(\bar{x}_1,\ldots,\bar{x}_n) \in [0,d]^n$.  Suppose first that $\bar{x} \in (0,d)^n$ satisfies (\ref{eq:pn-1}).  Let $u_i = \bar{x}_i/(d-\bar{x}_i)$ for $i=1,\ldots,n$.  Then, we obtain from (\ref{eq:pn-1}) that $(u_i)^{(1/p)-1}-(u_j)^{(1/p)-1} = u_j^{1/p} - u_i^{1/p}$, or equivalently,
\begin{equation} \label{eq:pn-2}
(u_i)^{(1/p)-1}(1+u_i) = (u_j)^{(1/p)-1}(1+u_j) \quad\mbox{for } 1\le i<j\le n.
\end{equation}
It is easy to verify that the function $t\mapsto t^{(1/p)-1}(1+t)$ is strictly decreasing on $(0,p-1]$ and strictly increasing on $[p-1,\infty)$.  Thus, if we let $I_1 = \{i:u_i < p-1\}$ and $I_2 = \{i:u_i \ge p-1\}$, then (\ref{eq:pn-2}) implies that $u_i=u_j=u$ for all $i,j \in I_1$ and $u_i=u_j=v$ for all $i,j \in I_2$; i.e., $\bar{x}_i=\bar{x}_j$ whenever $i,j \in I_1$ or $i,j \in I_2$.  We claim that in fact $I_2=\emptyset$.  To prove this, let us first show that $|I_2|\le1$.  Suppose to the contrary that $|I_2|\ge2$.  Let $i,j \in I_2$ be such that $i\not=j$.  Then, from (\ref{eq:pn-1}) and the fact that $u_i>0$ for $i=1,\ldots,n$, we have
$$ u_i^{(1/p)-1} - u_j^{1/p} = \sum_{k\not=i,j} u_k^{1/p} > 0. $$
However, since $u_i,u_j \ge p-1 \ge 1$, we have $u_i^{(1/p)-1}-u_j^{1/p} \le 0$, which is a contradiction.  It follows that $|I_2|\le1$.

Now, suppose that $|I_2|=1$.  Then, from (\ref{eq:pn-1}), we have
\begin{equation} \label{eq:pn-3}
v^{(1/p)-1} = (n-1)u^{1/p}.
\end{equation}
This, together with (\ref{eq:pn-2}), implies that
$$ u^{(1/p)-1}(1+u) = (n-1)u^{1/p}\left[ 1 + (n-1)^{p/(1-p)}u^{1/(1-p)} \right], $$
or equivalently,
\begin{equation} \label{eq:pn-4}
(n-2)u + (n-1)^{1/(1-p)}u^{(p-2)/(p-1)} = 1.
\end{equation}
Since both summands in (\ref{eq:pn-4}) are non--negative, we clearly have $u\le 1/(n-2)<1$.  We claim that 
\begin{equation} \label{eq:pn-5}
u \ge \beta \equiv \frac{1-(n-1)^{1/(1-p)}}{n-2}.
\end{equation}
Indeed, suppose to the contrary that $u < \beta$.  Then, using the fact that $u<1$, we have
\begin{eqnarray*}
(n-2)u + (n-1)^{1/(1-p)}u^{(p-2)/(p-1)} &<& 1-(n-1)^{1/(1-p)} + (n-1)^{1/(1-p)}u^{(p-2)/(p-1)} \\
\noalign{\medskip}
&=& 1 + (n-1)^{1/(1-p)} \left[ u^{(p-2)/(p-1)} - 1 \right] \\
\noalign{\medskip}
&\le& 1,
\end{eqnarray*}
which contradicts (\ref{eq:pn-4}).  This establishes (\ref{eq:pn-5}).

We now show that (\ref{eq:pn-5}) leads to $v<1$, which would contradict the definition of $v$.  Indeed, using (\ref{eq:pn-5}), we have
\begin{equation} \label{eq:pn-7}
(n-1)^pu \ge \frac{(n-1)^p}{n-2} \left[ 1 - (n-1)^{1/(1-p)} \right].
\end{equation}
Consider the function $h:[2,\infty)\limto\R$ given by
$$ h(p) = (n-1)^p\left[ 1 - (n-1)^{1/(1-p)} \right]. $$
By a routine computation, we have
$$ h'(p) = \ln(n-1) \cdot (n-1)^p \cdot \left[ 1 - (n-1)^{1/(1-p)} - \frac{1}{(1-p)^2} (n-1)^{1/(1-p)} \right]. $$
Observe that
\begin{eqnarray}
& & 1 - (n-1)^{1/(1-p)} - \frac{1}{(1-p)^2} (n-1)^{1/(1-p)} \ge 0 \nonumber \\
\noalign{\medskip}
& \Longleftrightarrow & \left( \frac{1}{n-1} \right)^{1/(p-1)} \left[ 1 + \frac{1}{(p-1)^2} \right] \le 1 \nonumber \\
\noalign{\medskip}
& \Longleftarrow & 1 + \frac{1}{(p-1)^2} \le 2^{1/(p-1)}. \label{eq:pn-6}
\end{eqnarray}
It is straightforward to show that (\ref{eq:pn-6}) holds for $p\in[2,3]$ by comparing the slopes of the functions $p\mapsto 1+(p-1)^{-2}$ and $p\mapsto 2^{1/(p-1)}$.  For $p\ge3$, we have
$$ \left[ 1 + \frac{1}{(p-1)^2} \right]^{(p-1)^2} \le e < 2^2 \le 2^{p-1}, $$
which implies that (\ref{eq:pn-6}) holds.  Thus, we see that $h$ is increasing on $p \in [2,\infty)$, and from (\ref{eq:pn-7}) we obtain
$$ (n-1)^pu \ge \frac{(n-1)^p}{n-2} \left[ 1 - (n-1)^{1/(1-p)} \right] \ge \frac{(n-1)^2}{n-2} \left( 1 - \frac{1}{n-1} \right) = n-1 > 1. $$
This, together with (\ref{eq:pn-3}), implies that
$$ v = \left( (n-1)^pu \right)^{1/(1-p)} < 1, $$
which is the desired contradiction.

Thus, we have shown that $|I_2|=0$.  Using (\ref{eq:pn-1}), we then have $u^{(1/p)-1} = (n-1)u^{1/p}$, or equivalently, $u=1/(n-1)$.  It follows that $\bar{x}=(d/n,\ldots,d/n)$ is the unique solution to (\ref{eq:pn-1}).

Next, we show that if $\bar{x}$ lies on the boundary of $[0,d]^n$, then $f_n(\bar{x}_1,\ldots,\bar{x}_n) \le f_n(d/n,\ldots,d/n)$.  Towards that end, we first note that
\begin{eqnarray}
f_n\left( \frac{d}{n},\ldots,\frac{d}{n} \right) &=& d^{n/p} \cdot n^{1-1/p} \cdot \left(1-\frac{1}{n}\right)^{(n-1)/p} \nonumber \\
\noalign{\medskip}
&=& d^{n/p} \cdot n \cdot \left( \frac{1}{n-1} \right)^{1/p} \cdot \left( 1-\frac{1}{n} \right)^{n/p} \nonumber \\
\noalign{\medskip}
&\ge& d^{n/p} \cdot n \cdot \left( \frac{1}{n-1} \right)^{1/p} \cdot e^{-1/p} \cdot \left( 1-\frac{1}{n} \right)^{1/p} \nonumber \\
\noalign{\medskip}
&\ge& d^{n/p} \cdot \sqrt{n/e}, \label{eq:pn-8}
\end{eqnarray}
where the last inequality follows from the fact that $p\ge2$.  Now, suppose that $\bar{x}_i=d$ for some $i=1,\ldots,n$.  Since the function $f_n$ is symmetric in its arguments, we may assume without loss that $i=n$.  Then, using (\ref{eq:pn-8}) and the fact that $n\ge3$, we have
$$ f_n(\bar{x}_1,\ldots,\bar{x}_{n-1},d) \le d^{n/p} < f_n\left( \frac{d}{n},\ldots,\frac{d}{n} \right). $$ 
On the other hand, suppose that $\bar{x}_n=0$.  Then, by the inductive hypothesis,
\begin{eqnarray}
f_n(\bar{x}_1,\ldots,\bar{x}_{n-1},0) &=& d^{1/p} \cdot f_{n-1}(\bar{x}_1,\ldots,\bar{x}_{n-1}) \nonumber \\
\noalign{\medskip}
&\le& d^{1/p} \cdot d^{(n-1)/p} \cdot (n-1)^{1-1/p} \cdot \left(1-\frac{1}{n-1}\right)^{(n-2)/p} \nonumber \\
\noalign{\medskip}
&=& d^{n/p} (n-1) \left( \frac{1}{n-2} \right)^{1/p} \left( 1-\frac{1}{n-1} \right)^{(n-1)/p}. \label{eq:pn-9}
\end{eqnarray}
Since $n\ge3$ and $p\ge2$, we have
\begin{eqnarray*}
&& (n-1) \left( \frac{1}{n-2} \right)^{1/p} \left( 1-\frac{1}{n-1} \right)^{(n-1)/p} < n  \left( \frac{1}{n-1} \right)^{1/p}  \left( 1-\frac{1}{n} \right)^{n/p} \\
\noalign{\medskip}
&\Longleftrightarrow& \frac{n}{n-1}  \left( 1-\frac{1}{n-1} \right)^{2/p}  \left[ 1+\frac{1}{n(n-2)} \right]^{n/p} > 1 \\
\noalign{\medskip}
&\Longleftarrow& \frac{n}{n-1}  \left( 1-\frac{1}{n-1} \right)^{2/p}  \left[ 1+\frac{1}{n(n-2)} \right]^{2/p} \ge 1 \\
\noalign{\medskip}
&\Longleftrightarrow& \left( \frac{n}{n-1} \right)^{1-2/p} \ge 1.
\end{eqnarray*}
Hence, we obtain from (\ref{eq:pn-9}) that
$$ f_n(\bar{x}_1,\ldots,\bar{x}_{n-1},0) < d^{n/p} \cdot n \cdot \left( \frac{1}{n-1} \right)^{1/p} \cdot \left( 1-\frac{1}{n} \right)^{n/p} = f_n\left( \frac{d}{n},\ldots,\frac{d}{n} \right). $$
This completes the proof of Proposition \ref{prop:aux-max}.
\end{proof}
\begin{prop} \label{prop:ml-symml-eqv}
Let $\mathcal{A} \in \R^{n_1\times n_2\times \cdots \times n_d}$ be an arbitrary order--$d$ tensor and $\mbox{sym}(\mathcal{A}) \in \R^{N^d}$ be its symmetrization, where $N=n_1+n_2+\cdots+n_d$.  Consider the optimization problems
\begin{equation} \tag{$A_d$}
\begin{array}{ccc@{\quad}l}
\tau(A_d) &=& \mbox{maximize} & d! \cdot F_\mathcal{A}(x^1,x^2,\ldots,x^d) \\
\noalign{\medskip}
& & \mbox{subject to} & \|x^i\|_p \le 1, \, x^i \in \R^{n_i} \quad\mbox{for } i=1,\ldots,d
\end{array}
\end{equation}
and
\begin{equation} \tag{$B_d$}
\begin{array}{ccc@{\quad}l}
\tau(B_d) &=& \mbox{maximize} & F_{\text{sym}(\mathcal{A})}(z^1,z^2,\ldots,z^d) \\
\noalign{\medskip}
& & \mbox{subject to} & \|z^i\|_p \le d^{1/p}, \, z^i \in \R^N \quad\mbox{for } i=1,\ldots,d,
\end{array}
\end{equation}
where $F_\mathcal{A}$ (resp.~$F_{\text{sym}}(\mathcal{A})$) is the multilinear form associated with $\mathcal{A}$ (resp.~$\mbox{sym}(\mathcal{A})$).  Then, the following hold:
\begin{enumerate}
   \item[\subpb] $\tau(A_d)=\tau(B_d)$.

   \item[\subpb]  Let $(\bar{x}^1,\bar{x}^2,\ldots,\bar{x}^d) \in \R^{n_1} \times \R^{n_2} \times \cdots \times \R^{n_d}$ be an optimal solution to Problem $(A_d)$.  Set
\begin{equation} \label{eq:hat-soln}
\hat{z}^i = \left[ \, (\bar{x}^1)^T \, (\bar{x}^2)^T \, \cdots \, (\bar{x}^d)^T \, \right]^T \in \R^N \quad \mbox{for } i=1,\ldots,d.
\end{equation}
Then, $(\hat{z}^1,\hat{z}^2,\ldots,\hat{z}^d)$ constitutes an optimal solution to Problem $(B_d)$.
   
   \item[\subpb] Let $(\tilde{z}^1,\tilde{z}^2,\ldots,\tilde{z}^d) \in \R^N \times \R^N \times \cdots \times \R^N$ be an optimal solution to Problem $(B_d)$ with
\begin{equation} \label{eq:tilde-soln}
\tilde{z}^i = \left[ \, (\tilde{z}^{i,1})^T \, \cdots \, (\tilde{z}^{i,d})^T \, \right]^T \in \R^N, \quad \tilde{z}^{i,j} \in \R^{n_j} \quad\mbox{for } i,j=1,\ldots,d.
\end{equation}
Then, $\|\tilde{z}^{i,j}\|_p = 1$ for $i,j=1,\ldots,d$.  Moreover, there exists a vector $\bar{z} \in \R^N$ with
$$ \bar{z} = \left[ \, (\hat{x}^1)^T \, (\hat{x}^2)^T \, \cdots \, (\hat{x}^d)^T \, \right]^T, \quad \hat{x}^i \in \R^{n_i} \quad\mbox{for } i=1,\ldots,d, $$
such that $(\bar{z},\ldots,\bar{z})$ is an optimal solution to Problem $(B_d)$ and $(\hat{x}^1,\hat{x}^2,\ldots,\hat{x}^d)$ is an optimal solution to Problem $(A_d)$.
\end{enumerate}
\resetspb
\end{prop}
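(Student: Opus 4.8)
The plan is to prove the three parts in order, funneling everything through one combinatorial inequality for permanents; throughout I assume $\mathcal{A}\ne\bz$, the zero case being vacuous. Write $v=v_{\sf ML}(\mathcal{A},d)$, so that $\tau(A_d)=d!\,v$, and recall from Proposition \ref{prop:ml-sym-ml} that $F_{\text{sym}(\mathcal{A})}(z^1,\ldots,z^d)=\sum_{\pi\in S_d}F_\mathcal{A}(z^{\pi_1,1},\ldots,z^{\pi_d,d})$, together with $f_{\text{sym}(\mathcal{A})}(z)=d!\,F_\mathcal{A}(z^1,\ldots,z^d)$ from (\ref{eq:sym-id}). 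The bound $\tau(B_d)\ge\tau(A_d)$ is then immediate: if $(\bar x^1,\ldots,\bar x^d)$ is optimal for $(A_d)$, its concatenation $\bar z$ satisfies $\|\bar z\|_p^p=\sum_i\|\bar x^i\|_p^p\le d$ and $F_{\text{sym}(\mathcal{A})}(\bar z,\ldots,\bar z)=f_{\text{sym}(\mathcal{A})}(\bar z)=\tau(A_d)$. Granting the reverse inequality, this simultaneously proves (b), with $\hat z^i=\bar z$ for every $i$.

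For the reverse inequality, take any feasible $(z^1,\ldots,z^d)$ for $(B_d)$, put $c_{ij}=\|z^{i,j}\|_p$, and estimate each summand by $F_\mathcal{A}(z^{\pi_1,1},\ldots,z^{\pi_d,d})\le\bigl(\prod_{j}c_{\pi_j,j}\bigr)v$ (rescale each block to the unit $L_p$--ball; a zero block makes the summand vanish, and $v\ge 0$ since $\bz$ is feasible). Summing over $\pi$ gives $F_{\text{sym}(\mathcal{A})}(z^1,\ldots,z^d)\le v\cdot\operatorname{per}(C)$ with $C=(c_{ij})$, so $\tau(B_d)\le\tau(A_d)$ follows once we establish the key estimate
$$\operatorname{per}(C)\le d!\quad\text{for all }C=(c_{ij})\text{ with }c_{ij}\ge 0,\ \textstyle\sum_{j=1}^d c_{ij}^p\le d\ \ (i=1,\ldots,d),$$
with equality (for $d\ge 3$) forcing $C$ to be the all--ones matrix. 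For $p=\infty$ this is trivial, since $c_{ij}\le 1$ and $\operatorname{per}$ is monotone. For $p\ge d$ it follows from AM--GM inside each term, $\prod_i c_{i,\pi(i)}\le\bigl(\tfrac1d\sum_i c_{i,\pi(i)}^p\bigr)^{d/p}$, together with Jensen's inequality for the concave map $t\mapsto t^{d/p}$, using $\frac1{d!}\sum_\pi\tfrac1d\sum_i c_{i,\pi(i)}^p=\frac1{d^2}\sum_i\sum_j c_{ij}^p\le 1$. The remaining and decisive range is $p\in[2,d)$, where $t\mapsto t^{d/p}$ is convex and Jensen points the wrong way; here I would argue by induction on $d$: expand $\operatorname{per}(C)$ along a row, rescale each of the resulting $(d-1)\times(d-1)$ matrices so its rows lie on the $L_p$--sphere of radius $(d-1)^{1/p}$, apply the inductive hypothesis, and bound the leftover sum $\sum_j c_{1j}\prod_{i\ne 1}(d-c_{ij}^p)^{1/p}$ by invoking Proposition \ref{prop:aux-max} with $n=d$, whose extremal value $d\,(d-1)^{(d-1)/p}$ is exactly what is needed to close the induction at $d!$.

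Part (c) comes from running this chain on an optimal $(\tilde z^1,\ldots,\tilde z^d)$ of $(B_d)$. By (a), $\tau(B_d)=\tau(A_d)=d!\,v$, so every inequality above is an equality; since scaling up any slack row of $C$ would strictly increase $\operatorname{per}(C)$, each row of $C$ lies on the sphere, and $\operatorname{per}(C)=d!$ then forces, by the strict form of the key estimate (valid for $d\ge 3$), that $\|\tilde z^{i,j}\|_p=1$ for all $i,j$. Equality in the term--by--term estimate gives $F_\mathcal{A}(\tilde z^{\pi_1,1},\ldots,\tilde z^{\pi_d,d})=v$ for every $\pi\in S_d$; taking $\pi=\mathrm{id}$ shows that $\hat x^j:=\tilde z^{j,j}$ yields a feasible point of $(A_d)$ attaining $\tau(A_d)$, hence optimal, and its concatenation $\bar z$ satisfies $\|\bar z\|_p=d^{1/p}$ and $F_{\text{sym}(\mathcal{A})}(\bar z,\ldots,\bar z)=d!\,v=\tau(B_d)$, so $(\bar z,\ldots,\bar z)$ is optimal for $(B_d)$.

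The bookkeeping (the reductions, the scalings to unit balls, the concatenations) is routine; the genuine obstacle is the permanent estimate in the range $p\in[2,d)$. The delicate point is that the matrix maximizing the row--expansion bound is not the matrix maximizing $\operatorname{per}(C)$, so the two factors cannot be bounded separately — the induction must feed the inductive hypothesis into the actual minors and let Proposition \ref{prop:aux-max} absorb precisely the coupling between the pivot row and the rest. Establishing the equality case (uniqueness at the all--ones matrix for $d\ge3$), for which the strict form of Proposition \ref{prop:aux-max} is required, is what makes the sharper conclusions of part (c) go through.
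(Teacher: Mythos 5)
Your architecture is genuinely different from the paper's (the paper never forms the matrix of block norms; it groups the permutations by which vector supplies the last block and inducts on the proposition itself), and the reduction $\tau(B_d)\le v\cdot\operatorname{per}(C)$ with $c_{ij}=\|z^{i,j}\|_p$, together with the concatenation argument for $\tau(B_d)\ge\tau(A_d)$ and part (b), is sound; the permanent estimate $\operatorname{per}(C)\le d!$ is in fact true. But the step you yourself identify as the crux --- the range $p\in[2,d)$ --- fails as described. Expanding along a \emph{row} gives $\operatorname{per}(C)\le \frac{(d-1)!}{(d-1)^{(d-1)/p}}\sum_j c_{1j}\prod_{i\ne1}(d-c_{ij}^p)^{1/p}$, and the leftover sum is \emph{not} of the form covered by Proposition \ref{prop:aux-max}: in $f_d$ every summand involves the same $d$ variables, whereas in your sum the $j$--th summand involves only the entries of column $j$, different columns sharing no variables and being coupled only through the row budgets $\sum_j c_{ij}^p\le d$. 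Under those constraints the bound you need, $\sum_j c_{1j}\prod_{i\ne1}(d-c_{ij}^p)^{1/p}\le d(d-1)^{(d-1)/p}$, is simply false: take $c_{1j}=1$ for all $j$ and $c_{ij}=0$ for $i\ge2$, which gives $d\cdot d^{(d-1)/p}$. So the induction closes only at $d!\,(d/(d-1))^{(d-1)/p}>d!$, and the equality analysis that part (c) leans on collapses with it.

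The repair is to expand transversally to the budget constraints, i.e.\ along a \emph{column}: $\operatorname{per}(C)=\sum_i c_{i1}\operatorname{per}(C(i|1))$, where the rows of the minor $C(i|1)$ have $\ell_p^p$--mass at most $d-c_{k1}^p$; after the same rescaling and the inductive hypothesis, the leftover sum becomes exactly $f_d(c_{11}^p,\ldots,c_{d1}^p)$ (one variable per row, each lying in $[0,d]$), and Proposition \ref{prop:aux-max} --- applied with the constant $d$ replaced by the current matrix size as the induction descends --- yields precisely $d(d-1)^{(d-1)/p}$ and closes the bound at $d!$. This column expansion is the permanent--level analogue of what the paper does: by conditioning on $\pi_d=i$ and invoking the proposition's own inductive hypothesis for the $(d-1)$--tensor $\mathcal{A}(\tilde z^{i,d})$, only the $d$ scalars $k_i=\|\tilde z^{i,d}\|_p^p$ survive, which is exactly why $f_d$ appears there. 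Finally, note that the equality statement you invoke for (c) (that $\operatorname{per}(C)=d!$ with tight rows forces $C$ to be all--ones for $d\ge3$) is asserted but not proved; with the corrected expansion it does follow from the tight--row scaling argument plus the uniqueness part of the proof of Proposition \ref{prop:aux-max} for $n=d\ge3$, but that argument needs to be written out, since it is what delivers $\|\tilde z^{i,j}\|_p=1$ and the term--by--term equalities $F_\mathcal{A}(\tilde z^{\pi_1,1},\ldots,\tilde z^{\pi_d,d})=v$ on which the rest of (c) rests.
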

\begin{proof}
By Proposition \ref{prop:ml-sym-ml}, Problem ($B_d$) is equivalent to
\begin{equation} \tag{$B_d'$}
\begin{array}{ccc@{\quad}l}
\tau(B_d) &=& \mbox{maximize} & \displaystyle{ \sum_{\pi \in S_d}  F_\mathcal{A}(z^{\pi_1,1},\ldots,z^{\pi_d,d}) } \\
\noalign{\medskip}
& & \mbox{subject to} & \|z^i\|_p \le d^{1/p}, \, z^i \in \R^N \quad\mbox{for } i=1,\ldots,d.
\end{array}
\end{equation}
If $(\bar{x}^1,\ldots,\bar{x}^d) \in \R^{n_1} \times \cdots \times \R^{n_d}$ is an optimal solution to Problem ($A_d$), then the solution $(\hat{z}^1,\ldots,\hat{z}^d) \in \R^N \times \cdots \times \R^N$ as defined in (\ref{eq:hat-soln}) is feasible for Problem $(B_d)$.  Moreover, we have
$$ \tau(B_d) \ge \sum_{\pi \in S_d}  F_\mathcal{A}(\hat{z}^{\pi_1,1},\ldots,\hat{z}^{\pi_d,d}) = d!\cdot F_\mathcal{A}(\bar{x}^1,\ldots,\bar{x}^d) = \tau(A_d). $$
Hence, (b) is implied by (a).

To prove (a) and (c), we consider two cases:

\medskip
\noindent {\bf Case 1: $p=\infty$.} Consider an optimal solution $(\tilde{z}^1,\ldots,\tilde{z}^d) \in \R^N \times \cdots \times \R^N$ to Problem $(B_d)$ with $\tilde{z}^i$ given by (\ref{eq:tilde-soln}).  Then, $(\tilde{z}^1,\ldots,\tilde{z}^d)$ is also optimal for Problem $(B_d')$.  Let $\tau \in S_d$ be a permutation of $\{1,\ldots,d\}$ satisfying
\begin{equation} \label{eq:max-permut}
F_\mathcal{A}(\tilde{z}^{\tau_1,1},\ldots,\tilde{z}^{\tau_d,d}) \ge F_\mathcal{A}(\tilde{z}^{\pi_1,1},\ldots,\tilde{z}^{\pi_d,d}) \quad\mbox{for all } \pi \in S_d.
\end{equation}
By the optimality of $(\tilde{z}^1,\ldots,\tilde{z}^d)$ for Problem $(B_d')$ and the fact that $\|\tilde{z}^i\|_\infty  = \max_{1\le j \le d} \|\tilde{z}^{i,j}\|_\infty$, we have $\|\tilde{z}^{i,j}\|_\infty=1$ for $i,j=1,\ldots,d$.  Now, set $\hat{x}^i = \tilde{z}^{\tau_i,i}$ for $i=1,\ldots,d$ and form $\bar{z} = \left[ \, (\hat{x}^1)^T \, \cdots \, (\hat{x}^d)^T \, \right]^T \in \R^N$.  By construction, we have $\|\bar{z}\|_\infty=1$ and hence $(\bar{z},\ldots,\bar{z})$ is feasible for Problem $(B_d')$.  Using (\ref{eq:max-permut}), we compute
\begin{equation} \label{eq:inf-ineq-1}
\sum_{\pi \in S_d} F_\mathcal{A}(\hat{x}^1,\ldots,\hat{x}^d) = \sum_{\pi \in S_d} F_\mathcal{A}(\tilde{z}^{\tau_1,1},\ldots,\tilde{z}^{\tau_d,d}) \ge \sum_{\pi \in S_d} F_\mathcal{A}(\tilde{z}^{\pi_1,1},\ldots,\tilde{z}^{\pi_d,d}) = \tau(B_d),
\end{equation}
which certifies the optimality of $(\bar{z},\ldots,\bar{z})$ for Problem $(B_d')$ and hence also for Problem $(B_d)$.   Moreover, since $\|\hat{x}^i\|_\infty \le \|\bar{z}\|_\infty = 1$ for $i=1,\ldots,d$, $(\hat{x}^1,\ldots,\hat{x}^d)$ is feasible for Problem $(A_d)$.  This implies that
\begin{equation} \label{eq:inf-ineq-2}
\sum_{\pi \in S_d} F_\mathcal{A}(\hat{x}^1,\ldots,\hat{x}^d) = d! \cdot F_\mathcal{A}(\hat{x}^1,\ldots,\hat{x}^d) \le \tau(A_d).
\end{equation}
Upon combining (\ref{eq:inf-ineq-1}) and (\ref{eq:inf-ineq-2}), we have $\tau(A_d) \ge \tau(B_d)$, and that $(\hat{x}^1,\ldots,\hat{x}^d)$ is an optimal solution to Problem $(A_d)$.  This establishes (a) and (c) for this case.

\medskip
\noindent{\bf Case 2: $p \in [2,\infty)$.} We prove (a) and (c) by induction on $d\ge2$.  For the base case (i.e., $d=2$), we have $\mathcal{A} \in \R^{n_1 \times n_2}$.  Hence, we can write
\begin{equation} \tag{$A_2$}
\tau(A_2) = 2\max\left\{ x^T\mathcal{A}y : \|x\|_p \le 1, \, \|y\|_p \le 1 \right\} 
\end{equation}
and
\begin{equation} \tag{$B_2$}
\tau(B_2) = \max\left\{ (v^1)^T\mathcal{A}w^2 + (w^1)^T\mathcal{A}v^2 : \|v^1\|_p^p + \|v^2\|_p^p \le 2, \, \|w^1\|_p^p + \|w^2\|_p^p \le 2 \right\}.
\end{equation}
Let $(\tilde{z}^1,\tilde{z}^2) \in \R^{n_1+n_2} \times \R^{n_1+n_2}$, where $\tilde{z}^1 = \left[ \, (\tilde{v}^1)^T \, (\tilde{v}^2)^T \,\right]$ and $\tilde{z}^2 = \left[ \, (\tilde{w}^1)^T \, (\tilde{w}^2)^T \,\right]$, be an optimal solution to Problem $(B_2)$.
Suppose that $\|\tilde{v}^1\|_p^p=k_1$ and $\|\tilde{w}^1\|_p^p=k_2$.  Then,
\begin{eqnarray*}
\tau(B_2) &\le& \max\left\{ (v^1)^T\mathcal{A}w^2 : \|v^1\|_p^p \le k_1, \, \|w^2\|_p^p \le 2-k_2 \right\} \\
\noalign{\medskip}
&\quad+& \max\left\{ (w^1)^T\mathcal{A}v^2 : \|w^1\|_p^p \le k_2, \, \|v^2\|_p^p \le 2-k_1 \right\} \\
\noalign{\medskip}
&=& \frac{\tau(A_2)}{2} \left[ k_1^{1/p}(2-k_2)^{1/p} + k_2^{1/p}(2-k_1)^{1/p} \right] \\
\noalign{\medskip}
&\le& \tau(A_2), 
\end{eqnarray*}
where the last inequality follows from Proposition \ref{prop:aux-max} and the fact that $0\le k_1,k_2\le 2$.  This establishes (a).  Moreover, since all the above inequalities hold as equalities, from the proof of Proposition \ref{prop:aux-max}, both $k_1,k_2$ must equal to $1$ when $p>2$ and can be taken as $1$ when $p=2$.  This, together with the optimality of $(\tilde{z}^1,\tilde{z}^2)$, implies that we can take $\tilde{v}^1=\tilde{w}^1$ and $\tilde{v}^2=\tilde{w}^2$.  Upon setting $\hat{x}^i = \tilde{v}^i$ for $i=1,2$ and forming $\bar{z} = \left[ \, (\hat{x}^1)^T \, (\hat{x}^2)^T \, \right]$, it can be verified that (c) holds.  Thus, the base case is established.

Next, consider an optimal solution $(\tilde{z}^1,\ldots,\tilde{z}^d)$ to Problem $(B_d)$ with $\tilde{z}^i$ given by (\ref{eq:tilde-soln}).  Suppose that $\|\tilde{z}^{i,d}\|_p^p=k_i$ for $i=1,\ldots,d$.  Then,
\begin{eqnarray}
\tau(B_d) &=& \sum_{\pi \in S_d} F_\mathcal{A}(\tilde{z}^{\pi_1,1},\ldots,\tilde{z}^{\pi_d,d}) \nonumber \\
\noalign{\medskip}
&=& \sum_{i=1}^d \sum_{\pi \in S_d:\pi_d=i} F_{\mathcal{A}(\tilde{z}^{i,d})}(\tilde{z}^{\pi_1,1},\ldots,\tilde{z}^{\pi_{d-1},d-1}) \nonumber \\
\noalign{\medskip}
&\le& \sum_{i=1}^d \max_{{\|w^j\|_p^p \le d-k_j, \, w^j \in \R^{N-n_d}}\atop{j=1,\ldots,i-1,i+1,\ldots,d}} \sum_{\pi \in S_d:\pi_d=i} F_{\mathcal{A}(\tilde{z}^{i,d})}(w^{\pi_1,1},\ldots,w^{\pi_{d-1},d-1}) \nonumber \\
\noalign{\medskip}
&=& \sum_{i=1}^d \prod_{j\not=i} \left( \frac{d-k_j}{d-1} \right)^{1/p} \max_{{\|w^j\|_p^p \le d-1, \, w^j \in \R^{N-n_d}}\atop{j=1,\ldots,d-1}} \sum_{\pi \in S_{d-1}} F_{\mathcal{A}(\tilde{z}^{i,d})}(w^{\pi_1,1},\ldots,w^{\pi_{d-1},d-1}) \nonumber \\
\noalign{\medskip}
&=& (d-1)! \sum_{i=1}^d \prod_{j\not=i} \left( \frac{d-k_j}{d-1} \right)^{1/p} \max_{{\|x^j\|_p \le 1, \, x^j \in \R^{n_j}} \atop {j=1,\ldots,d-1}}  F_{\mathcal{A}(\tilde{z}^{i,d})}(x^1,\ldots,x^{d-1}) \label{eq:eqv-2} \\
\noalign{\medskip}
&\le& (d-1)! \sum_{i=1}^d \prod_{j\not=i} \left( \frac{d-k_j}{d-1} \right)^{1/p} \max_{{\|x^j\|_p \le 1, \, x^j \in \R^{n_j}} \atop {j=1,\ldots,d-1}}  \max_{\|x^d\|_p^p \le k_i, \, x^d \in \R^{n_d}} F_\mathcal{A}(x^1,\ldots,x^{d-1},x^d) \nonumber 
\end{eqnarray}
\begin{eqnarray}
&=& \frac{\tau(A_d)}{d(d-1)^{(n-1)/p}} \sum_{i=1}^d k_i^{1/p} \prod_{j\not=i} (d-k_j)^{1/p} \qquad\qquad\qquad\qquad\qquad\qquad\qquad \nonumber \\
\noalign{\medskip}
&\le& \tau(A_d), \label{eq:eqv-3}
\end{eqnarray}
where (\ref{eq:eqv-2}) follows from Proposition \ref{prop:ml-sym-ml} and the inductive hypothesis, and (\ref{eq:eqv-3}) follows from Proposition \ref{prop:aux-max} and the fact that $0\le k_i \le d$ for $i=1,\ldots,d$.  This establishes (a).  Moreover, since all the above inequalities hold as equalities, the proof of Proposition \ref{prop:aux-max} shows that we must have $k_i=1$ for $i=1,\ldots,d$.  This implies that $\|\tilde{z}^{i,d}\|_p = 1$ for $i=1,\ldots,d$.  By repeating the above argument using the group $\{\tilde{z}^{i,j}:i=1,\ldots,d\}$ in place of $\{\tilde{z}^{i,d}:i=1,\ldots,d\}$ for each $j=1,\ldots,d-1$, we conclude that $\|\tilde{z}^{i,j}\|_p = 1$ for $i,j=1,\ldots,d$.  Now, as in Case 1, let $\tau \in S_d$ be a permutation of $\{1,\ldots,d\}$ satisfying (\ref{eq:max-permut}).  Set $\hat{x}^i = \tilde{z}^{\tau_i,i}$ for $i=1,\ldots,d$ and form $\bar{z} = \left[ \, (\hat{x}^1)^T \, \cdots \, (\hat{x}^d)^T \, \right]^T \in \R^N$.  By construction, we have 
$$ \|\bar{z}\|_p^p = \sum_{i=1}^d \|\hat{x}^i\|_p^p = \sum_{i=1}^d \|\tilde{z}^{\tau_i,i}\|_p^p = d $$
and hence $(\bar{z},\ldots,\bar{z})$ is feasible for Problem $(B_d)$.  It remains to argue as in Case 1 to complete the inductive step and also the proof of Proposition \ref{prop:ml-symml-eqv}.
\end{proof}

\medskip
Proposition \ref{prop:ml-symml-eqv} implies that for any given $d\ge3$ and $p\in[2,\infty]$, any instance of Problem $({\sf ML})$ can be converted into an instance of Problem $({\sf MR})$ in polynomial time.  Since Problem $({\sf ML})$ is NP--hard by Proposition \ref{prop:ML-NPh}, it follows that Problem $({\sf MR})$ is also NP--hard.  This completes the proof of Theorem \ref{thm:MR-NPh}. \endproof


\bibliography{sdpbib}
\bibliographystyle{abbrv}
\end{document}